\crefname{theorem}{Theorem}{Theorems}
\crefname{thm}{Theorem}{Theorems}
\crefname{mainthm}{Theorem}{Theorems}
\crefname{lemma}{Lemma}{Lemmas}
\crefname{lem}{Lemma}{Lemmas}
\crefname{remark}{Remark}{Remarks}
\crefname{claim}{Claim}{Claims}
\crefname{subclaim}{Sub-claim}{Sub-claims}
\crefname{prop}{Proposition}{Propositions}
\crefname{proposition}{Proposition}{Propositions}
\crefname{defn}{Definition}{Definitions}
\crefname{corollary}{Corollary}{Corollaries}
\crefname{conjecture}{Conjecture}{Conjectures}
\crefname{question}{Question}{Questions}
\crefname{chapter}{Chapter}{Chapters}
\crefname{section}{Section}{Sections}
\crefname{figure}{Figure}{Figures}
\theoremstyle{plain}
\newtheorem{thm}{Theorem}
\newtheorem*{thm*}{Theorem}
\newtheorem{lemma}[thm]{Lemma}
\newtheorem{corollary}[thm]{Corollary}
\newtheorem{prop}[thm]{Proposition}
\theoremstyle{definition}
\theoremstyle{remark}
\numberwithin{equation}{section}
\newcommand\cA{\mathcal{A}}
\newcommand{\cB}{{\mathcal B}}
\newcommand\cE{\mathcal{E}}
\newcommand{\cG}{{\mathcal G}}
\newcommand{\cI}{{\mathcal I}}
\newcommand\cJ{\mathcal{J}}
\newcommand\cP{\mathcal{P}}
\newcommand\cR{\mathcal{R}}
\newcommand\cS{\mathcal{S}}
\newcommand{\cV}{{\mathcal V}}
\newcommand{\cW}{{\mathcal W}}
\newcommand\cZ{\mathcal{Z}}
\newcommand\bS{{\bf S}}
\newcommand\bW{{\bf W}}
\newcommand\bY{{\bf Y}}
\newcommand{\bfs}{{\bf s}}
\newcommand{\bfv}{{\bf v}}
\newcommand{\bfY}{{\bf Y}}
\newcommand{\bfA}{{\bf A}}
\newcommand\Z{\mathbb{Z}}
\newcommand\R{\mathbb{R}}
\newcommand\bigmid{\mathrel{\Big|}}
\newcommand\sgn{{\rm sgn}}
\newcommand\Bern{\rm{Bernoulli}}
\renewcommand{\P}{{\mathbb P}}
\newcommand\E{{\mathbb E}}
\newcommand{\1}{{\bf 1}}
\newcommand\all{\:\forall}
\newcommand\sY{\check Y}
\newcommand\sA{\check A}
\newcommand{\eps}{\varepsilon}
\renewcommand{\le}{\leqslant}
\renewcommand{\ge}{\geqslant}
\def\@tocline#1#2#3#4#5#6#7{\relax
  \ifnum #1>\c@tocdepth % then omit
  \else
    \par \addpenalty\@secpenalty\addvspace{#2}%
    \begingroup \hyphenpenalty\@M
    \@ifempty{#4}{%
      \@tempdima\csname r@tocindent\number#1\endcsname\relax
    }{%
      \@tempdima#4\relax
    }%
    \parindent\z@ \leftskip#3\relax \advance\leftskip\@tempdima\relax
    \rightskip\@pnumwidth plus4em \parfillskip-\@pnumwidth
    #5\leavevmode\hskip-\@tempdima
      \ifcase #1
       \or\or \hskip 1em \or \hskip 2em \else \hskip 3em \fi%
      #6\nobreak\relax
    \dotfill\hbox to\@pnumwidth{\@tocpagenum{#7}}\par
    \nobreak
    \endgroup
  \fi}
\newcommand\rw{Y} %random walk
\newcommand\rwa{A} %random walk area
\newcommand\nex{{N_n}} %number of excursions
\newcommand\nexa{{M_n}} %number of excursions area process
\author[S. Donderwinkel]{Serte Donderwinkel}
\address{University of Groningen, Bernoulli Institute for Mathematics, Computer Science and AI, and CogniGron (Groningen Cognitive Systems and Materials Center)}
\email{s.a.donderwinkel@rug.nl}
\author[B. Kolesnik]{Brett Kolesnik}
\address{University of Oxford, Department of Statistics and Magdalen College}
\email{brett.kolesnik@stats.ox.ac.uk}
\keywords{digraph;
majorization; 
persistence probability; 
pinning model; 
random walk; 
scaling limit; 
score sequence;  
tournament;
wetting model}
\subjclass[2010]{05C20;	%Directed graphs (digraphs), tournaments
11P21;	%Lattice points in specified regions
52B05;	%Combinatorial properties of polytopes and polyhedra
60F05;	%Central limit and other weak theorems 
60G50;	%Sums of independent random variables; random walks
62J15}	%Paired and multiple comparisons
\begin{document}

\title[Tournaments and random walks]
{Tournaments and random walks}

%%%%%%%%%%%%%%%%%%%%%%%%%%%%%%%%%%%%%%
%%%%%%%%%%%%%%%%%%%%%%%%%%%%%%%%%%%%%%
%%%%%%%%%%%%%%%%%%%%%%%%%%%%%%%%%%%%%%
%%%%%%%%%%%%%%%%%%%%%%%%%%%%%%%%%%%%%%
%%%%%%%%%%%%%%%%%%%%%%%%%%%%%%%%%%%%%%
%%%%%%%%%%%%%%%%%%%%%%%%%%%%%%%%%%%%%%
\begin{abstract} 
We study the relationship between
tournaments and random walks. 
This connection was first observed by 
Erd{\H{o}}s and Moser. 
Winston and Kleitman came close to 
showing that $S_n=\Theta(4^n/n^{5/2})$. 
Building on this,
and works by  Tak\'acs, 
these asymptotic bounds were confirmed by Kim and Pittel. 

In this work, we verify Moser's conjecture that 
$S_n\sim C4^n/n^{5/2}$, using limit 
theory for 
integrated random walk bridges.
Moreover, we show that 
$C$ can be described 
in terms of random walks. 
Combining this with a recent proof 
and number-theoretic description of $C$ 
by 
the second author, 
we obtain an analogue of 
Louchard's formula, for the Laplace transform of the 
squared Brownian excursion/Airy area measure. 
Finally, we describe the scaling limit 
of random score sequences, in 
terms of the Kolmogorov excursions,    
studied recently by 
B\"{a}r, Duraj and Wachtel.

Our results can also be interpreted as 
answering questions related to 
a class of random polymers, 
which began with influential work of Sina\u{\i}. 
From this point of view,
our methods yield
the precise asymptotics
of a persistence probability, related to the pinning/wetting 
models from statistical physics, that was estimated 
up to constants by 
Aurzada, Dereich and Lifshits, 
as conjectured by 
Caravenna and Deuschel. 

%We study the relationship between
%tournaments and random walks. 
%This connection was observed by 
%Moser, who conjectured that
%the number of score sequences 
%$S_n\sim C4^n/n^{5/2}$. 
%Winston and Kleitman came close to 
%finding the correct order of $S_n$. 
%Building on this,
%and subsequent works by  Tak\'acs, 
%Kim and Pittel confirmed that $S_n=\Theta(4^n/n^{5/2})$.
%
%In this work, we verify Moser's 
%conjecture, using limit theory for 
%integrated random walk bridges. 
%Moreover, we show that $C$ can be described 
%in terms of random walks. 
%Combining this with a recent proof 
%and alternative, number-theoretic description of $C$ 
%by 
%the second author, 
%we obtain an analogue of 
%Louchard's formula, for the Laplace transform of the 
%squared Brownian excursion/Airy area measure. 
%Finally, we describe the scaling limit 
%of random score sequences, in 
%terms of the Kolmogorov excursions,    
%studied recently by 
%B\"{a}r, Duraj and Wachtel.
%
%Our results can also be interpreted as 
%answering questions related to 
%a class of random polymers, 
%which began with influential work of Sina\u{\i}. 
%From this point of view,
%our methods yield
%the precise asymptotics
%of a persistence probability, related to the pinning/wetting 
%models from statistical physics, that was estimated 
%up to constants by 
%Aurzada, Dereich and Lifshits, 
%as conjectured by 
%Caravenna and Deuschel. 
\end{abstract}

\maketitle

%%%%%%%%%%%%%%%%%%%%%%%%%%%%%%%%%%%%%%
%%%%%%%%%%%%%%%%%%%%%%%%%%%%%%%%%%%%%%
%%%%%%%%%%%%%%%%%%%%%%%%%%%%%%%%%%%%%%
%%%%%%%%%%%%%%%%%%%%%%%%%%%%%%%%%%%%%%
%%%%%%%%%%%%%%%%%%%%%%%%%%%%%%%%%%%%%%
\section{Introduction}\label{S_intro}

A {\it tournament} $T$ is an orientation of the complete graph 
$K_n$ on $[n]=\{1,2,\ldots,n\}$. 
Intuitively, we think of vertices as players and edges $\{i,j\}$ as games, 
oriented as $i\to j$ if $i$ wins against $j$. 
The score sequence ${\bf s}(T)$ of $T$ is the weakly increasing 
rearrangement of the out-degree sequence, which lists the total 
number of wins by each team. 

We call $\bfv_n=(0,1,\ldots,n-1)$ the 
{\it standard score sequence}. 
This corresponds to the acyclic tournament, 
in which $i$ wins against all $j<i$. 
Intuitively, $\bfv_n$ is as ``spread out'' as possible. 
Indeed, Landau \cite{Lan53} proved that 
${\bf s}=(s_1\le\cdots\le s_n)\in\Z^n$ 
is a score sequence if and only if 
${\bf s}$ is \emph{majorized} 
(see, e.g., Marshall, Olkin and Arnold \cite{MOA11}) 
by $\bfv_n$, 
that is, 
\begin{align}
\label{E_maj1}&\sum_{i=1}^k s_i\ge \binom{k}{2},\quad\quad 1\le k<n,\\ 
\label{E_maj2}&\sum_{i=1}^n s_i=\binom{n}{2}. 
\end{align}
These conditions are necessary, since 
any $k$ teams must win at least the 
number of games between them ${k\choose2}$, 
and there are ${n\choose2}$ games in total. 
On the other hand, sufficiency follows directly, e.g., by 
the Havel--Hakimi \cite{Hav55,Hak62}
algorithm and standard majorization techniques. 

Equivalently, score sequences correspond to the weakly increasing lattice
points of the permutahedron $\Pi_{n-1}$, that is, 
the convex hull of $\bfv_n$
and its permutations. The set of all lattice points, on the other hand, 
is in bijection with the set of all spanning forests $F\subset K_n$.
Therefore, score sequences 
correspond to a 
special class of spanning forests. 
See, e.g., Stanley \cite{Sta80}
(cf.\ Postnikov \cite{Pos09}).

%%%%%%%%%%%%%%%%%%%%%%%%%%%%%%%%%%%%%%
%%%%%%%%%%%%%%%%%%%%%%%%%%%%%%%%%%%%%%
%%%%%%%%%%%%%%%%%%%%%%%%%%%%%%%%%%%%%%
%%%%%%%%%%%%%%%%%%%%%%%%%%%%%%%%%%%%%%
%%%%%%%%%%%%%%%%%%%%%%%%%%%%%%%%%%%%%%
\subsection{Counting score sequences}
In contrast, 
the number $S_n$ of score sequences 
appears not to have a simple description. 
For instance, MacMahon \cite{Mac20} 
calculated
$S_n$ for small values of $n$
using symmetric functions, but such an analysis becomes 
challenging very quickly. 
Claesson, Dukes, Frankl\'in and 
Stef\'ansson \cite{CDFS23} recently verified a recursion, 
conjectured by Hanna \cite{A000571}. 
However, this does not lead to  
a simple closed form expression, but a rather complicated  
cycle/product formula \cite[(8)]{CDFS23}
(formally equivalent to the recursion itself). 

The first asymptotic bounds on $S_n$ were obtained 
by Erd{\H{o}}s and Moser 
(see, e.g., Moon's \cite{Moo68} classic monograph), 
who, according to Kleitman \cite[p.\ 209]{Kle69}, 
conjectured that 
$S_n=\Theta(4^n/n^{5/2})$.
Furthermore, 
in 1968 symposium proceedings, 
Moser \cite[p.\ 165]{Mos71} stated that 
\begin{quote}
We feel that 
the asymptotic behavior should be of the form 
$c4^n/n^{5/2}$ and I hereby offer \$25.00 to anyone who will 
find a proof of this conjecture.
\end{quote}
Winston and Kleitman \cite{WK83} came close 
to proving that $S_n=\Theta(4^n/n^{5/2})$, 
up to a certain bound
on the $q$-Catalan numbers (see \cite[\S9--11]{WK83}). 
Building on works by Tak\'acs \cite{Tak86,Tak91}, 
Kim and Pittel \cite{KP00} finally 
verified this to be the correct asymptotic order
of $S_n$.

Our first result is a short, probabilistic proof 
(see \cref{S_Theta}) of this fact,  via its 
connection with 
integrated random walk bridges.

%%%%%%%%%%%%%%%%%%%%%%%%%%%%%%%%%%%%%%
%%%%%%%%%%%%%%%%%%%%%%%%%%%%%%%%%%%%%%
%%%%%%%%%%%%%%%%%%%%%%%%%%%%%%%%%%%%%%
%%%%%%%%%%%%%%%%%%%%%%%%%%%%%%%%%%%%%%
%%%%%%%%%%%%%%%%%%%%%%%%%%%%%%%%%%%%%%
\begin{thm}
[\hspace{1sp}\cite{WK83,KP00}]
\label{T_order}
$S_n=\Theta(4^n/n^{5/2})$. 
\end{thm}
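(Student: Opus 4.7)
The plan is to encode score sequences as non-negative lattice bridges of an integrated random walk, and then read off $\Theta(4^n/n^{5/2})$ by combining a bivariate local CLT for the joint endpoint of the walk and its integral with a persistence estimate for integrated random walk bridges.

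First I would set $a_0:=0$ and $a_i:=s_i-(i-1)$ for a weakly increasing integer sequence $\bfs=(s_1,\ldots,s_n)$; the Landau conditions \eqref{E_maj1}--\eqref{E_maj2}, together with monotonicity of $\bfs$, are then equivalent to the three constraints (i) $a_i-a_{i-1}\ge -1$; (ii) $R_k:=\sum_{i\le k}a_i\ge 0$ for $1\le k\le n-1$; and (iii) $R_n=0$. So $S_n$ equals the number of such integer paths. Next I would let $X_i=G_i-1$ with $G_i\sim\Geom(1/2)$ iid, giving a mean-zero walk $A_k=X_1+\cdots+X_k$ with steps in $\{-1,0,1,\ldots\}$ whose law assigns mass $2^{-(c+2n)}$ to any trajectory with endpoint $c$. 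Counting each valid path exactly once then yields
\begin{equation*}
S_n=4^n\sum_{c\le 0}2^c\,\P\bigl(A_n=c,\,R_n=0,\,R_k\ge 0\ \forall\, k\le n\bigr).
\end{equation*}

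To estimate the summand, I would observe that $(A_n,R_n)=\bigl(\sum_j X_j,\sum_j(n-j+1)X_j\bigr)$ has covariance matrix with entries of orders $n$, $n^2$, $n^3$ and determinant $\Theta(n^4)$. A bivariate local CLT for lattice walks then gives $\P(A_n=c,\,R_n=0)=\Theta(n^{-2})$, uniformly for $c$ in any bounded set. Multiplying by the persistence probability $\P(R_k\ge 0\ \forall\, k\le n\mid A_n=c,\,R_n=0)=\Theta(n^{-1/2})$---the persistence of the integrated random walk bridge, estimated up to constants in our setting by Aurzada--Dereich--Lifshits (cf.\ Caravenna--Deuschel)---yields a joint probability of order $n^{-5/2}$ for $c=O(1)$. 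The weights $2^c$ are summable over $c\le 0$ (with total $2$) and decay geometrically away from $0$, so the sum in the displayed expression above is concentrated on $c=O(1)$ and contributes $\Theta(n^{-5/2})$, giving $S_n=\Theta(4^n/n^{5/2})$.

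The hard part will be the persistence estimate of the right order. The $O(n^{-1/2})$ upper bound should follow from a standard reflection/barrier argument, but the matching $\Omega(n^{-1/2})$ lower bound requires a genuine fluctuation-theoretic input---essentially a local version of the invariance principle toward the Kolmogorov excursion---and this is where the cited work on integrated random walk bridges enters the argument.
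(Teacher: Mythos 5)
Your encoding is a genuinely different one from the paper's: you transform the Landau conditions directly into a geometric-step integrated walk $a_i=s_i-(i-1)$ of length $n$, whereas the paper rotates the $\uparrow,\rightarrow$ staircase by $\pi/4$ to obtain a symmetric $\pm 1$ walk of length $2n$ and the clean identity $S_n=4^n\,\P(\cS_n)$. Your weighted sum $S_n=4^n\sum_{c\le 0}2^c\,\P(A_n=c,R_n=0,R_k\ge 0)$ is correct, and the bivariate local CLT giving $\Theta(n^{-2})$ uniformly over bounded $c$ is the same tool the paper uses (via \cite[Prop.\ 2.1]{ADL14}). So the architecture local CLT $\times$ persistence $=\Theta(n^{-5/2})$ matches.

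The genuine gap is in the persistence factor $\P(R_k\ge 0\ \forall k\mid A_n=c,R_n=0)=\Theta(n^{-1/2})$, and it is the part where your sketch points in the wrong direction. First, \cite{ADL14} proves this for the simple $\pm 1$ walk, not for your (asymmetric, unbounded, geometric-step) walk; transferring it is plausible by universality but not automatic. Second, and more fundamentally, your proposed routes to the two bounds are mismatched with what actually works here. Reflection/barrier arguments do \emph{not} give the $O(n^{-1/2})$ upper bound for an integrated random walk: the area process has no useful reflected version, and this obstruction is precisely what makes persistence problems for integrated walks hard (Sina\u{\i}'s $\Theta(n^{-1/4})$ for the unconditioned case is already not a reflection argument). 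Conversely, the invariance principle toward the Kolmogorov excursion \cite{BDW23} is heavier machinery than the persistence bound itself, and in the literature one typically proves persistence \emph{first} in order to get at the invariance principle. What the paper does instead is short and self-contained: it observes that on the bridge event $\cZ_n$ the increments of the coarse-grained area process $\check S$ (between even zero-hitting times of $\check Y$) are cyclically exchangeable, applies a cycle lemma (\cref{L_cycle}) to deduce that $\P(\cA_n\mid\cZ_n,N_n=N)\ge 1/N$, hence $\P(\cA_n\mid\cZ_n)\ge\E[1/N_n\mid\cZ_n]=\Theta(n^{-1/2})$, and gets the matching upper bound by a small perturbation/prefix trick that upgrades $\ge 0$ to $>0$ so the strict cycle-lemma bound $\le 1/N$ applies. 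This makes the lower bound the \emph{trivial} direction and the upper bound the one requiring the trick — the opposite of what you anticipated — and it avoids invoking either \cite{ADL14}'s Theorem 1.1 or any invariance principle. To close the gap in your approach you would need to prove the persistence estimate for your walk; replacing the reflection/invariance-principle sketch with a cycle-lemma argument on the cyclically exchangeable area increments of the bridge would do it, and would bring your proof in line with the paper's.
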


This connection 
was, in fact, first recorded by Moser \cite{Mos71}, 
and it plays a central role in the arguments of 
Winston and Kleitman
(cf.\ 
\cite[p.\ 212]{WK83}). 

If a score sequence $\bfs=(s_1,\ldots,s_n)$ is drawn as a 
``bar graph,'' we obtain an  
$\uparrow,\rightarrow$ lattice walk $\cW(\bfs)$
from $(0,0)$ to $(n,n)$. 
The $\rightarrow$ steps are taken at times $s_k+k$, 
at which point the $k$th ``bar'' of height $s_k$ is completed. 
We let $\cW_n=\cW(\bfv_n)$ denote the {\it staircase walk}
corresponding to the standard score sequence $\bfv_n$. 
See \cref{F_WK}. 

Rotating a walk $\cW(\bfs)$ clockwise by $\pi/4$, 
we obtain a bridge path $\cP(\bfs)$. 
More precisely, this path has increments equal to $-1$
at times $t=s_k+k$, $1\le k\le n$, and equal to $+1$ otherwise. 
Let $\check\cP_n=\cP(\bfv_n)$ denote the {\it sawtooth path,}
obtained by rotating $\cW_n$. Note that 
$\check\cP_n$
oscillates between $0$ and $-1$. 
The conditions 
\eqref{E_maj1} and \eqref{E_maj2}
for score sequences $\bfs$ 
correspond to $\cP(\bfs)$ having 
partial areas $\ge0$
and 
total area $0$
above  $\check\cP_n$. 
See \cref{F_WKrot}.

The program outlined in \cite{WK83} 
ignores the partial areas condition \eqref{E_maj2},
which is the most technically challenging. 
In this work, we will take both conditions 
\eqref{E_maj1} and \eqref{E_maj2}
into account, taking inspiration from the 
recent work of 
Balister, the first author, 
Groenland, Johnston and Scott \cite{BDGJS22}, 
which asymptotically enumerates 
the number of graphical sequences. 
The connection with random walks \cite{Mos71,WK83}
continues to hold in this context. 
See \cref{S_GraphicalSeq} below for
more details. 

Our next result identifies the precise asymptotics of $S_n$, 
and proves Moser's \cite{Mos71} conjecture.

%%%%%%%%%%%%%%%%%%%%%%%%%%%%%%%%%%%%%%
%%%%%%%%%%%%%%%%%%%%%%%%%%%%%%%%%%%%%%
%%%%%%%%%%%%%%%%%%%%%%%%%%%%%%%%%%%%%%
%%%%%%%%%%%%%%%%%%%%%%%%%%%%%%%%%%%%%%
%%%%%%%%%%%%%%%%%%%%%%%%%%%%%%%%%%%%%%
\begin{thm}
\label{T_C}
As $n\to\infty$, we have that 
$n^{5/2}S_n/4^n\to C$. 
\end{thm}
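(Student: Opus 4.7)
The plan is to express $S_n$ as $4^n$ times a persistence-type probability for the simple random walk, and then to extract the precise limiting constant using the scaling limit of integrated random walk bridges to the Kolmogorov excursion. First I would translate the score-sequence count into a random-walk probability using the bijection recalled above. Let $(Y_t)_{t\ge 0}$ be a simple random walk with i.i.d.\ $\pm 1$ steps, set $A_t=\sum_{s=1}^t Y_s$, and write $\check A_n(t)=\sum_{s=1}^t \check\cP_n(s)$ for the running area under the sawtooth. The bridge condition $Y_{2n}=0$ records that $\cP(\bfs)$ has equally many $+1$ and $-1$ steps, and \eqref{E_maj1}--\eqref{E_maj2} translate respectively to $A_t\ge \check A_n(t)$ for every $t$ and $A_{2n}=\check A_n(2n)=-n$. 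Hence
\[
S_n \;=\; 4^n\cdot \P\!\left(Y_{2n}=0,\; A_{2n}=-n,\; A_t\ge \check A_n(t)\ \text{for all}\ 0\le t\le 2n\right).
\]
Setting $Z_t=A_t-\check A_n(t)$, the event on the right is exactly the event that an integrated random walk is a nonnegative bridge.

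Next I would carry out the asymptotics in two stages. A bivariate local central limit theorem for $(Y_t,A_t)$ gives $\P(Y_{2n}=0,\, A_{2n}=-n)\sim c_{\mathrm{LLT}}/n^2$, with $c_{\mathrm{LLT}}$ read off from the joint Gaussian density of the scaling limit of $(Y_{2n}/\sqrt{2n},\, A_{2n}/(2n)^{3/2})$ at the origin. Conditioning on these endpoint values and rescaling, the pair $(Y,A)$ converges to a Brownian bridge together with its integral pinned to $0$; the probability that the latter stays nonnegative is of order $n^{-1/2}$, by the persistence theory of Aurzada-Dereich-Lifshits and Caravenna-Deuschel. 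Multiplying the two estimates gives the correct order $n^{-5/2}$ for the probability above, and in principle identifies $C$ as the product of $c_{\mathrm{LLT}}$ with the persistence density at the origin of the pinned integrated Brownian bridge.

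The main obstacle is upgrading these $\Theta$-estimates to asymptotic equality with an identifiable constant. The difficulty is that the positivity constraint is singular near the boundary of the state space, so a naive combination of the local limit theorem with the persistence probability is not uniform enough to isolate the limit. To handle this, I would invoke the sharp invariance principle of B\"ar, Duraj and Wachtel, which identifies the integrated random walk bridge conditioned on nonnegativity with the Kolmogorov excursion as its scaling limit. Combined with a tightness and uniform-integrability argument in the spirit of the graphical-sequence analysis in \cite{BDGJS22}, this lets one interchange the persistence and local-limit asymptotics. Executing this upgrade rigorously, while preserving the precise constant through the simultaneous pinning of $Y_{2n}$ and $A_{2n}$ and extracting a clean expression for $C$ as a Brownian functional, is the most delicate step.
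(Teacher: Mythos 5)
Your high-level framing agrees with the paper: by \cref{P_SnBridge}, $S_n/4^n=\P(\cA_n\cap\cZ_n)$, the local limit theorem of \cite{ADL14} (\cref{L_ADL14}) gives $\P(\cZ_n)\sim\sqrt{3}/(\pi n^2)$, and the remaining task is to show $n^{1/2}\P(\cA_n\mid\cZ_n)$ converges and to identify the limit. Where the proposal falls short is precisely the step you flag as ``the most delicate'': invoking the invariance principle of B\"ar--Duraj--Wachtel does not, by itself, produce the asymptotics of the conditioning probability. An invariance principle of the form used in \cref{P_EtoY} identifies the scaling limit of the walk \emph{conditioned} on $\cA_n\cap\cZ_n$; it does not compute $\P(\cA_n\mid\cZ_n)$ with a sharp constant, which is exactly the persistence probability conjectured in \cite{CD08} and proved, up to constants only, in \cite{ADL14}. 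The leap from $\Theta(n^{-1/2})$ to $\sim C'/n^{1/2}$ is the content of the theorem, so invoking tightness and uniform integrability ``in the spirit of \cite{BDGJS22}'' without specifying the mechanism leaves the core of the argument unaddressed.

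Concretely, the paper's mechanism is the cycle lemma with a random perturbation (\cref{L_cycle} and \cref{P_KeyF}), which converts the persistence probability into a ratio of harmonic moments,
\[
\P(\cA_n\mid\cZ_n)\sim\frac{\E[N_n^{-1}\mid\cZ_n]}{\E[M_n^{-1}\mid\cS_n]},
\]
where $N_n$ counts even zeros of $\check Y$ and $M_n$ counts ``almost'' renewal times $\cZ_k$. The numerator is a genuinely continuum quantity and is evaluated via Tak\'acs's Bernoulli-excursion limit, producing the Airy integral (\cref{P_numer}); the denominator is a genuinely \emph{discrete} quantity, a harmonic moment of a negative binomial with parameter $\rho=\P(\check A_\tau=0)$ (\cref{P_MntoM}). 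That $\rho$ factor lives at the lattice scale near the two endpoints of the bridge, and no purely diffusive/continuum argument of the sort you sketch can produce it — this is why your tentative identification of $C$ as ``$c_{\mathrm{LLT}}$ times a persistence density at the origin'' is not a well-posed description (there is no such density; the persistence probability tends to $0$). The missing idea is the separation of scales into a macroscopic excursion contribution and a microscopic boundary renewal contribution, implemented via the cycle lemma; without it the proposal cannot identify $C$.
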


This result follows by 
\cref{P_KeyF,P_numer,P_MntoM,T_asySn_B}
below.

%%%%%%%%%%%%%%%%%%%%%%%%%%%%%%%%%%%%%%
%%%%%%%%%%%%%%%%%%%%%%%%%%%%%%%%%%%%%%
%%%%%%%%%%%%%%%%%%%%%%%%%%%%%%%%%%%%%%
%%%%%%%%%%%%%%%%%%%%%%%%%%%%%%%%%%%%%%
%%%%%%%%%%%%%%%%%%%%%%%%%%%%%%%%%%%%%%
\subsection{The constant $C$}
The constant $C$ is described in \cref{S_C} below, in terms of 
simple random walk. 
\cref{T_C} was recently proved
by the second author in \cite{Kol23}, using
limit theory for 
infinitely divisible distributions, together with the recursion in \cite{CDFS23}, 
mentioned above.  
In this context, $C$ has
an alternative  
description in terms of the Erd{\H{o}}s--Ginzburg--Ziv numbers 
from additive number theory, see \cref{S_C} below.

\begin{figure}[h]
\centering
\includegraphics[scale=1]{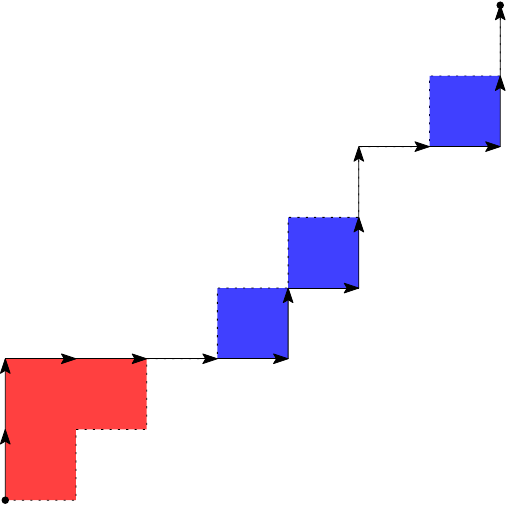}
\caption{The $\uparrow,\rightarrow$ lattice walk $\cW(\bfs)$
from $(0,0)$ to $(7,7)$ 
corresponding to 
score sequence $\bfs=2222355$. 
Positive/negative excursions
above/below the staircase walk $\cW_7$ (dotted), corresponding to 
standard score sequence 
$\bfv_7=0123456$, are red/blue. 
}
\label{F_WK}
\end{figure}

\begin{figure}[h]
\centering
\includegraphics[scale=1]{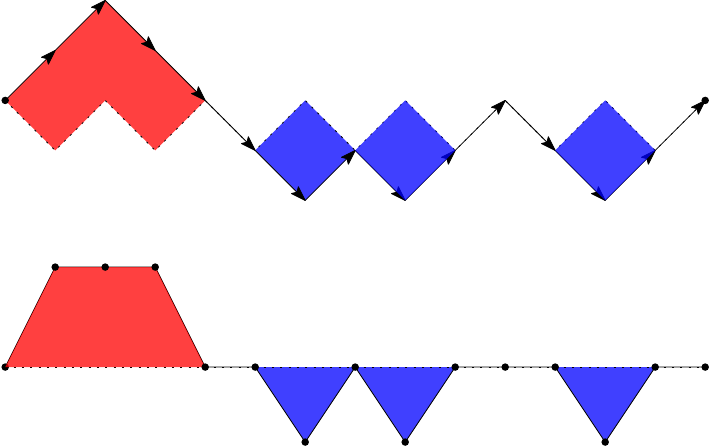}
\caption{
{\it Above:} The bridge path $\cP(\bfs)$ 
obtained by rotating $\cW(\bfs)$
in \cref{F_WK} and the sawtooth path $\check \cP_7$ (dotted)
obtained by rotating the staircase walk $\cW_7$. 
{\it Below:} The height process above $\check \cP_7$, 
with positive/negative contributions to the area shaded
in red/blue. 
}
\label{F_WKrot}
\end{figure}

%%%%%%%%%%%%%%%%%%%%%%%%%%%%%%%%%%%%%%
%%%%%%%%%%%%%%%%%%%%%%%%%%%%%%%%%%%%%%
%%%%%%%%%%%%%%%%%%%%%%%%%%%%%%%%%%%%%%
%%%%%%%%%%%%%%%%%%%%%%%%%%%%%%%%%%%%%%
%%%%%%%%%%%%%%%%%%%%%%%%%%%%%%%%%%%%%%
\subsection{An Airy integral}
Equating the two expressions for $C$, 
given by \cref{T_asySn_B,T_asySn_N} below, we obtain the following 
equation, which is reminiscent of 
Louchard's formula  
\cite[p.\ 490]{Lou84} (cf.\ 
\cite[\S13]{Jan07})
for the integrated Laplace transform of the {\it Brownian excursion area} $\cB$
in terms of the Airy function. We recall that 
$\cB$ is the area under the path of Brownian motion, 
conditioned to stay positive and return to 0
at time 1. 
Alternatively, 
$\cA = 2^{3/2}\cB$ is the {\it Airy area measure}.

%%%%%%%%%%%%%%%%%%%%%%%%%%%%%%%%%%%%%%
%%%%%%%%%%%%%%%%%%%%%%%%%%%%%%%%%%%%%%
%%%%%%%%%%%%%%%%%%%%%%%%%%%%%%%%%%%%%%
%%%%%%%%%%%%%%%%%%%%%%%%%%%%%%%%%%%%%%
%%%%%%%%%%%%%%%%%%%%%%%%%%%%%%%%%%%%%%
\begin{corollary}\label{C_AiryInt}
The Brownian excursion area $\cB$ satisfies 
\begin{equation}\label{E_AiryInt}
\int_0^\infty
\E[\exp(-6x^3\cB^2)]
\sqrt{1+1/x}\: dx
=\frac{\pi}{\sqrt3}.
\end{equation}
\end{corollary}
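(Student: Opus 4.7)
The plan is to exploit the two independent expressions for the constant $C$ in \cref{T_C} that have already been established earlier in the paper. By \cref{T_asySn_B}, one representation identifies $C$ with the integral on the left-hand side of \eqref{E_AiryInt}, up to an explicit normalizing prefactor coming from the scaling to $n^{5/2}$ and the variance of the underlying $\pm 1$ walk. By \cref{T_asySn_N}, a second representation identifies the same $C$ via an arithmetic sum over the Erd{\H{o}}s--Ginzburg--Ziv numbers, due to the second author \cite{Kol23}, which admits a closed-form evaluation. Equating these two expressions and cancelling the common prefactors yields the identity.

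First, I would unpack \cref{T_asySn_B}. Recall from \cref{F_WKrot} that a score sequence corresponds to a bridge path $\cP(\bfs)$ whose running area above the sawtooth $\check\cP_n$ is nonnegative and whose total area above it vanishes. The enumeration $S_n$ therefore reduces to a joint local-CLT calculation for the terminal height and the signed area of a conditioned random walk. Conditioning on the rescaled terminal height $x$, the Gaussian local CLT provides the density weight $\sqrt{1+1/x}$; conditional on the shape of the rescaled positive excursion, the residual area constraint is asymptotically Gaussian with a variance proportional to $x^3\cB^2$, and integrating this Gaussian density against the law of the Brownian excursion produces the factor $\E[\exp(-6x^3\cB^2)]$. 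Integration over $x$ then yields the integral on the left of \eqref{E_AiryInt}.

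Next I would invoke \cref{T_asySn_N}, which, combined with the evaluation in \cite{Kol23}, expresses $C$ as a specific constant multiple of $\pi/\sqrt 3$; the prefactor here is arranged to match that appearing in \cref{T_asySn_B} exactly. Dividing the two formulas by this common prefactor and equating the results gives \eqref{E_AiryInt}.

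The main obstacle is bookkeeping rather than substance: the two formulas for $C$ were derived under different normalizing conventions, and one must verify that all numerical constants align (the $6$ inside the exponent, the $\sqrt{1+1/x}$ weight, and the $\pi/\sqrt 3$ on the right) so that no stray multiplicative factor survives when the prefactors cancel. Once this verification is done, the corollary follows immediately by equating the two representations of $C$.
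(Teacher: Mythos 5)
Your overall strategy—equate the two limiting expressions for $C$ coming from \cref{T_asySn_B} and \cref{T_asySn_N}—is the paper's approach. But you have glossed over the one nontrivial ingredient, and in doing so you have described the mechanism incorrectly.

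The two expressions are
\[
C \;=\; \frac{1}{\rho}\,\frac{\sqrt3}{2\pi^{3/2}}\int_0^\infty\E\bigl[\exp(-6x^3\cB^2)\bigr]\sqrt{1+1/x}\,dx
\qquad\text{and}\qquad
C \;=\; \frac{e^\lambda}{2\sqrt\pi}.
\]
Equating them and rearranging gives
\[
\int_0^\infty\E\bigl[\exp(-6x^3\cB^2)\bigr]\sqrt{1+1/x}\,dx \;=\; \rho\,e^\lambda\cdot\frac{\pi}{\sqrt3},
\]
so \eqref{E_AiryInt} is equivalent to the identity $\rho = e^{-\lambda}$. This is not ``bookkeeping'' of numerical prefactors, and it is not because the EGZ sum ``admits a closed-form evaluation'' (it does not; $\lambda=\sum_k V_k/(k4^k)$ is not evaluated in closed form anywhere, and $C$ is not literally a constant multiple of $\pi/\sqrt3$). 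Rather, $\rho=e^{-\lambda}$ is a structural fact that the paper has to argue: $\rho$ is the parameter $p$ of the shifted negative binomial limit of $M_n$ established in \cref{P_MntoM}, while $e^{-\lambda}$ is the parameter of the shifted negative binomial limit of the number of irreducible parts established in \cite{Kol23}, and these two counts have the same limiting law. Your proposal never mentions $\rho$, never states that $\rho=e^{-\lambda}$ is needed, and asserts a closed-form evaluation that does not exist—so as written the argument does not close. To fix it, simply invoke $\rho=e^{-\lambda}$ (from \cref{P_MntoM} and the negative-binomial limit in \cite{Kol23}) and then the cancellation is immediate; the middle paragraph heuristically re-deriving \cref{T_asySn_B} is unnecessary since that theorem is used as a black box.
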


The simplicity of the right hand side calls for a
more direct explanation, perhaps by expanding the left hand side, 
integrating term by term, and using known facts about the 
fractional, negative moments of $\cB$.

%%%%%%%%%%%%%%%%%%%%%%%%%%%%%%%%%%%%%%
%%%%%%%%%%%%%%%%%%%%%%%%%%%%%%%%%%%%%%
%%%%%%%%%%%%%%%%%%%%%%%%%%%%%%%%%%%%%%
%%%%%%%%%%%%%%%%%%%%%%%%%%%%%%%%%%%%%%
%%%%%%%%%%%%%%%%%%%%%%%%%%%%%%%%%%%%%%
\subsection{Random polymers}
A straightforward adaptation of 
our proof of \cref{T_C}
also leads to 
a sharpening of the main result in  
Aurzada, Dereich, and Lifshits  \cite{ADL14}.

Consider a simple random walk 
$Y_n$ started at $Y_0=0$, 
and let $A_n=\sum_{i=1}^n Y_i$ be its 
area process. In \cite[Theorem 1.1]{ADL14}, it is shown that 
\begin{equation}\label{E_phi_n}
\phi_n=\P(A_1,\ldots,A_{4n}\ge0\mid Y_{4n}=A_{4n}=0)
=\Theta(n^{-1/2}),
\end{equation}
as conjectured by 
Caravenna and Deuschel \cite[\S1.5]{CD08}, 
in their study of the {\it pinning/wetting models}.

As discussed in \cite[p.\ 2]{ADL14}, 
the conditions in \eqref{E_phi_n} are meant to 
``model a polymer chain with Laplace interaction and zero boundary conditions.''
The history of such questions began with 
the influential work of Sina\u{\i} \cite{Sin92}, who showed that 
the {\it persistence probability} 
$\P(A_1,\ldots,A_{n}\ge0)=\Theta(n^{-1/4})$. 
See, e.g., the survey by 
Aurzada and Simon \cite{AS15}
for more on persistence probabilities and their applications. 

Our 
proof of \cref{T_C} can be adapted to 
obtain the following.

%%%%%%%%%%%%%%%%%%%%%%%%%%%%%%%%%%%%%%
%%%%%%%%%%%%%%%%%%%%%%%%%%%%%%%%%%%%%%
%%%%%%%%%%%%%%%%%%%%%%%%%%%%%%%%%%%%%%
%%%%%%%%%%%%%%%%%%%%%%%%%%%%%%%%%%%%%%
%%%%%%%%%%%%%%%%%%%%%%%%%%%%%%%%%%%%%%
\begin{corollary}\label{cor:polymers}
As $n\to\infty$, 
we have that 
$n^{1/2} \phi_n 
\to C'$. 
\end{corollary}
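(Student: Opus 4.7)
The plan is to follow the strategy used to prove \cref{T_C}, with the sawtooth majorization constraint replaced by plain positivity of the area process. First I would write
\[
\phi_n = \frac{\P(\rwa_1,\ldots,\rwa_{4n}\ge 0,\,\rw_{4n}=0,\,\rwa_{4n}=0)}{\P(\rw_{4n}=0,\,\rwa_{4n}=0)}.
\]
The denominator is handled by the bivariate local limit theorem for the pair $(\rw_N,\rwa_N)$ at the origin: after rescaling by $(\sqrt{N},N^{3/2})$, the pair converges to a centred Gaussian whose covariance matrix is read off from the second moments of Brownian motion and its time integral, so the denominator is asymptotic to $c_1/n^2$ for an explicit constant $c_1$ given by the limiting density at $(0,0)$ (with the usual lattice correction for simple random walk).

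For the numerator, I would mimic the proof of \cref{T_C}. In that argument, $S_n$ was realised via the rotation $\cW\mapsto\cP$ as the number of integer-valued random walk bridges of length $2n$ whose area process lies above the deterministic sawtooth $\check\cP_n$ and returns to zero. The quantity needed here is strictly simpler: the lower boundary is just the horizontal axis and the length is $4n$. Applying the invariance principle of B\"{a}r, Duraj and Wachtel for integrated random walks conditioned to stay non-negative, together with a local limit refinement at the endpoint $(\rw_{4n},\rwa_{4n})=(0,0)$, yields
\[
\P(\rwa_1,\ldots,\rwa_{4n}\ge 0,\,\rw_{4n}=0,\,\rwa_{4n}=0)\sim \frac{c_2}{n^{5/2}},
\]
where $c_2$ is expressible in terms of the density at $0$ of the Kolmogorov excursion at its terminal time. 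Taking the ratio yields $n^{1/2}\phi_n\to C' = c_2/c_1$.

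The main obstacle is the endpoint local limit refinement for the persistence event. In the proof of \cref{T_C}, the periodic geometry of the sawtooth $\check\cP_n$ effectively synchronises the parity of the walk with the majorization constraint, and this is used to extract sharp endpoint asymptotics; here one has to argue directly, either by splitting off a sub-polynomial initial window and using a Markov decomposition to reduce to an unconditioned local limit, or by performing a time reversal and applying the B\"{a}r--Duraj--Wachtel convergence at both ends before gluing in the middle. Once this endpoint local limit is established, the remaining estimates are essentially identical to those for the numerator in \cref{T_C}, with lighter bookkeeping because no sawtooth offset needs to be tracked.
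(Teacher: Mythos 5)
Your decomposition of $\phi_n$ into numerator over denominator and the treatment of the denominator via the bivariate local limit theorem (\cref{L_ADL14}) are fine, but your route for the numerator is not the paper's, and it leaves the hard step unresolved. The obstacle you flag yourself --- an endpoint local limit refinement for the persistence event --- is essentially the content of the result: an invariance principle of B\"{a}r--Duraj--Wachtel type gives only weak functional convergence and does not by itself produce a local limit with a sharp constant, and neither of the two remedies you sketch (peeling off a sub-polynomial initial window; time reversal and gluing) is developed to the point of actually extracting that constant. Moreover, the B\"{a}r--Duraj--Wachtel theorem is invoked in the paper only for the scaling limit \cref{T_ScalingLimit}, not for the enumeration. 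Your characterization of what makes the score-sequence proof tractable (``the periodic geometry of the sawtooth ... synchronises the parity'') is also not the mechanism at work; the parity matching is a minor bookkeeping point.

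What the paper actually does sidesteps any local limit refinement for the conditioned process. After factoring out $\P(\cZ_n)\sim\sqrt{3}/(\pi n^2)$ via \cref{L_ADL14} (your denominator), the remaining persistence probability is rewritten in terms of the increments of the area process between successive zeros of the walk, and the cycle lemma \cref{L_cycle}, applied after a small random perturbation to break ties, converts it exactly into the ratio of harmonic moments $\E[N_n^{-1}\mid\cZ_n]/\E[M_n^{-1}\mid\cS_n]$ of \cref{P_KeyF}. These are then computed separately: \cref{P_numer} produces the Airy-type integral, and \cref{P_MntoM} shows the number of ``almost'' renewal times converges to a negative binomial, giving the factor $\rho$. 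For \cref{cor:polymers} the whole argument is repeated verbatim with $Y$ in place of $\check Y$, $\rho$ replaced by $\rho'$ as in \cref{sec:polymer}, and the length $2n$ replaced by $4n$; no sawtooth appears at all, which confirms that the cycle lemma and harmonic-moment identity --- not the sawtooth geometry --- are what extract the constant.
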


The definition of $C'$ is similar to the definition of $C$. 
See \cref{sec:polymer} below.

In fact, the scheme in \cref{S_Theta} 
can also be used to give a simpler proof of 
\eqref{E_phi_n} itself.

We will give an alternative, combinatorial proof of this result,
along with an explicit, number-theoretic description of $C'$
in \cite{DK24b}.

%%%%%%%%%%%%%%%%%%%%%%%%%%%%%%%%%%%%%%
%%%%%%%%%%%%%%%%%%%%%%%%%%%%%%%%%%%%%%
%%%%%%%%%%%%%%%%%%%%%%%%%%%%%%%%%%%%%%
%%%%%%%%%%%%%%%%%%%%%%%%%%%%%%%%%%%%%%
%%%%%%%%%%%%%%%%%%%%%%%%%%%%%%%%%%%%%%
\subsection{Scaling random score sequences}
Finally, we investigate the ``shape''
of random score sequences. 
{\it Kolmogorov excursions}
$(\bfY_t,\bfA_t)$ were recently studied by 
B\"{a}r, Duraj and Wachtel \cite{BDW23}.
Such a process is, informally, obtained  
(using $h$-transforms) by 
considering a Brownian bridge $\bfY_t$, 
conditioned on its area process $\bfA_t$
staying $\ge0$ and returning to $0$ at time $1$.

%%%%%%%%%%%%%%%%%%%%%%%%%%%%%%%%%%%%%%
%%%%%%%%%%%%%%%%%%%%%%%%%%%%%%%%%%%%%%
%%%%%%%%%%%%%%%%%%%%%%%%%%%%%%%%%%%%%%
%%%%%%%%%%%%%%%%%%%%%%%%%%%%%%%%%%%%%%
%%%%%%%%%%%%%%%%%%%%%%%%%%%%%%%%%%%%%%
\begin{thm}
\label{T_ScalingLimit}
Let
${\bf S}^{(n)}=({\bf S}^{(n)}_1\le \cdots\le {\bf S}^{(n)}_n)$ 
be a uniformly random score 
sequence of length $n$. 
Then, as $n\to\infty$, we have that
\begin{equation}\label{E_ScalingLimit}
\left(\frac{{\bf S}^{(n)}_{\lfloor t n \rfloor}-\lfloor t n \rfloor}{\sqrt{n}},\: 
0\le  t\le  1\right)
\overset{d}{\to}
\left(\bfY_t,\: 0\le  t\le  1\right), 
\end{equation}
where $(\bfY_t,\bfA_t)$ is a Kolmogorov excursion. 
\end{thm}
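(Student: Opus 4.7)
The plan is to reduce the scaling limit for random score sequences to an invariance principle for conditioned simple random walk bridges, and then apply the convergence to the Kolmogorov excursion established in \cite{BDW23}.

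First, I would exploit the bijection already central to the paper: a uniform score sequence ${\bf S}^{(n)}$ corresponds to a uniformly random SRW bridge $X = \cP({\bf S}^{(n)})$ of length $2n$, subject to the area constraints \eqref{E_maj1}--\eqref{E_maj2}. Setting $\tau_k = {\bf S}^{(n)}_k + k$ for the time of the $k$-th down-step and $y_k = {\bf S}^{(n)}_k - (k-1)$, a direct count shows that $y_k = X_{\tau_k - 1}$ (the bridge height just before the $k$-th down-step), and the area constraints become $\sum_{i=1}^k y_i \ge 0$ for $k < n$ with equality at $k = n$. In particular, the distribution of $X$ is uniform on SRW bridges of length $2n$ whose partial sums of $y$ are nonnegative and vanish at $k = n$.

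Next, I would invoke (or adapt) the invariance principle of \cite{BDW23} to conclude that, under this conditioning, the rescaled bridge $(X_{\lfloor 2nu \rfloor}/\sqrt{n})_{u \in [0,1]}$ converges in $D[0,1]$ to the Kolmogorov excursion $(\bfY_u)_{u \in [0,1]}$, up to fixed normalization constants. In the diffusive limit, the discrete condition $\sum_{i \le k} y_i \ge 0$ rescales to the requirement that the time-integral $\bfA_t = \int_0^t \bfY_s\, ds$ of the Brownian bridge $\bfY$ is $\ge 0$ and vanishes at $t = 1$, which matches the informal description of the Kolmogorov excursion recalled just before the theorem. Since $\tau_k = 2k + (y_k - 1)$ and $y_k = O(\sqrt{n})$ with high probability, the time change $k \leftrightarrow \tau_k$ differs from $k \leftrightarrow 2k$ by $O(\sqrt{n})$, which is negligible on the diffusive scale. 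Combining these, $({\bf S}^{(n)}_{\lfloor tn \rfloor} - \lfloor tn \rfloor)/\sqrt{n}$ differs from $X_{\tau_{\lfloor tn \rfloor}-1}/\sqrt{n}$ by $1/\sqrt{n}$, and the latter converges to $\bfY_t$ as claimed.

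The main obstacle will be rigorously applying the invariance principle to our particular conditioning. The work \cite{BDW23} treats unconditioned random walks with nonnegative area; the bridge version, in which the area integral also returns to zero at time $1$, requires either a direct extension (via $h$-transforms, or a time-reversal/concatenation argument), or should follow from the conditioning-to-stay-positive machinery already employed in the proofs of \cref{P_KeyF} and \cref{T_asySn_B}. Tracking the correct variance constants (the factor $\sqrt{2}$ from working with time $2n$ rather than $n$, together with the Bernoulli step variance) is a secondary technical point that must be absorbed into the normalization of $\bfY$. Once the invariance principle is in place, the time change in the preceding paragraph is a routine consequence of joint tightness and the continuity of the limit.
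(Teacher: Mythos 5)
Your proposal follows essentially the same route as the paper: pass via the bijection to the conditioned simple random walk bridge, invoke the invariance principle of \cite{BDW23} for the bridge, and then undo the time change back to the score sequence indexing. The paper separates the first two steps into \cref{P_EtoY}, where the discrete constraints $A_k \ge -\lceil k/2\rceil$, $A_{2n}=-n$, $Y_{2n}=0$ are rewritten, after the half-integer shifts $A_k' = A_k + k/2 + 3/4$ and $Y_k' = Y_k + 1/2$, exactly as the strict positivity/bridge conditioning to which \cite[Theorem 2]{BDW23} applies directly; you flag this transfer as a potential obstacle ("requires either a direct extension...") but do not carry it out, and that is the one genuine gap in your sketch --- it turns out there is no extension needed, only this elementary reformulation, but you would need to check that.

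One place where your argument is actually \emph{cleaner} than the paper's is the time change. You observe the deterministic identity $\tau_k = 2k - 1 + y_k$ (equivalently $\vartheta^{-1}_k = (k - X_k)/2$), which immediately gives $\sup_k |\tau_k - 2k| \le 1 + \sup_k |X_k| = O_P(\sqrt{n})$ from the tightness already supplied by \cref{P_EtoY}; hence $(2n)^{-1}\tau_{\lfloor nt\rfloor} \to t$ uniformly with no further probabilistic input. The paper instead proves this via a Chernoff bound for the number of down steps of the unconditioned walk plus a union bound, and then transfers it to the conditioned walk using the polynomial lower bound $\P(\cS_n)=\Theta(n^{-5/2})$. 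Your deterministic observation makes that machinery unnecessary. Finally, you correctly note that there is a normalization constant to track between $(2n)^{-1/2}$ and $n^{-1/2}$; that bookkeeping is benign but must be absorbed into the convention for $\bfY$, exactly as you say.
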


Therefore, a random score sequence ${\bf S}^{(n)}$ has 
Brownian $O(\sqrt{n})$ fluctuations around its mean $\bfv_n$
(the staircase walk $\lfloor t n \rfloor$). 

An analogous fact for uniformly 
random tournaments ${\bf T}^{(n)}$
was proved by Spencer \cite{Spe71,Spe80}
(cf.\ Bollob\'{a}s and Scott \cite{BS15} and references therein). 
That is, the score sequence $\bfs({\bf T}^{(n)})$
also has Brownian $O(\sqrt{n})$ fluctuations around its mean, in this case  
${\bf u}_n=(n-1,\ldots,n-1)/2$. 

We note that 
$\bfv_n$ and ${\bf u}_n$
are 
the most extreme score sequences, 
being  the most and least ``spread out'' as possible, respectively.
The additional entropy, associated with weighting each $\bfs'$ according to the number 
of tournaments $T$ such that $\bfs(T)=\bfs'$, shifts us 
from one extreme to the other. 

From a geometric point of view, 
\cref{T_ScalingLimit} implies that 
most weakly increasing lattice points
in the permutahedron 
are near the vertex $\bfv_n$.

%%%%%%%%%%%%%%%%%%%%%%%%%%%%%%%%%%%%%%
%%%%%%%%%%%%%%%%%%%%%%%%%%%%%%%%%%%%%%
%%%%%%%%%%%%%%%%%%%%%%%%%%%%%%%%%%%%%%
%%%%%%%%%%%%%%%%%%%%%%%%%%%%%%%%%%%%%%
%%%%%%%%%%%%%%%%%%%%%%%%%%%%%%%%%%%%%%
\subsection{Acknowledgments}
We thank 
Sergi Elizalde, 
Christina Goldschmidt,
Svante Janson
and 
Jim Pitman
for helpful discussions. 
SD would like to acknowledge the financial support 
of the CogniGron research center
and the Ubbo Emmius Funds (Univ.\ of Groningen).

%\vspace{-0.25em}
%%%%%%%%%%%%%%%%%%%%%%%%%%%%%%%%%%%%%%%
%%%%%%%%%%%%%%%%%%%%%%%%%%%%%%%%%%%%%%%
%%%%%%%%%%%%%%%%%%%%%%%%%%%%%%%%%%%%%%%
%%%%%%%%%%%%%%%%%%%%%%%%%%%%%%%%%%%%%%%
%%%%%%%%%%%%%%%%%%%%%%%%%%%%%%%%%%%%%%%
%\tableofcontents

%%%%%%%%%%%%%%%%%%%%%%%%%%%%%%%%%%%%%%
%%%%%%%%%%%%%%%%%%%%%%%%%%%%%%%%%%%%%%
%%%%%%%%%%%%%%%%%%%%%%%%%%%%%%%%%%%%%%
%%%%%%%%%%%%%%%%%%%%%%%%%%%%%%%%%%%%%%
%%%%%%%%%%%%%%%%%%%%%%%%%%%%%%%%%%%%%%
\section{The constant and Airy integral}
\label{S_C}

In this section, we describe two equivalent descriptions of the constant
$C$ in \cref{T_C}. Combining these, we will obtain 
the Airy integral \eqref{C_AiryInt}. 
We will also introduce notation that will
be used throughout this work, and 
formalize the connection between score sequences
and random walks, discussed above.

%%%%%%%%%%%%%%%%%%%%%%%%%%%%%%%%%%%%%%
%%%%%%%%%%%%%%%%%%%%%%%%%%%%%%%%%%%%%%
%%%%%%%%%%%%%%%%%%%%%%%%%%%%%%%%%%%%%%
%%%%%%%%%%%%%%%%%%%%%%%%%%%%%%%%%%%%%%
%%%%%%%%%%%%%%%%%%%%%%%%%%%%%%%%%%%%%%
\subsection{Via random walk bridges}
\label{S_CviaRW}

As discussed above, 
enumerating $S_n$ is equivalent to finding the probability 
that a uniformly random 
$\uparrow,\rightarrow$ lattice walk $\cW$ from $(0,0)$ to $(n,n)$ 
has partial areas $\ge0$ and 
total area $0$
above the staircase walk $\cW_n$. 
Rotating clockwise by $\pi/4$, this can be rephrased in terms of 
a simple (symmetric) random walk
$(Y_k:k\ge0)$ on $\Z$. 

Let 
\begin{equation}\label{E_checkYA}
\check Y_k=Y_k+\1_{\{k\text{ odd}\}},
\quad \quad
\check A_k=\sum_{\ell=1}^k \check Y_\ell
\end{equation}
be the height and partial area at time $k$, 
above the sawtooth path $\check\cP_k$. 
We note that 
$\check A_k=A_k+\lceil k/2\rceil$, where
$ A_k=\sum_{\ell=1}^k  Y_\ell$. 

For ease of notation, throughout this work, we let 
\begin{align}
\cA_n&=\{\check\rwa_1,\dots,\check\rwa_{2n}\ge 0\},\label{E_cA}\\
\cZ_n&=\{\check\rw_{2n}=\check\rwa_{2n}=0\}.\label{E_cZ}
\end{align}
Next, we observe that a random walk 
corresponds to a score sequence
if and only if 
both of these events occur. 

\begin{lemma}
\label{P_SnBridge}
The number $S_n$ of score sequences 
of length $n$ 
satisfies   
\begin{equation}\label{E_ScS}
S_n
=4^n\: \P(\cS_n), 
\end{equation}
where 
\begin{equation}\label{E_cS}
\cS_n=\cA_n\cap \cZ_n.
\end{equation}
\end{lemma}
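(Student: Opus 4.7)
The plan is to establish an explicit bijection between score sequences $\bfs$ of length $n$ and simple random walk trajectories $(Y_0,\dots,Y_{2n})$ realizing the event $\cS_n$, and then read off \eqref{E_ScS} from the fact that each individual trajectory of $2n$ $\pm 1$-steps carries $\P$-probability $4^{-n}$.

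The forward map $\bfs \mapsto \cP(\bfs)$ is already described in the introduction: draw the bar graph $\cW(\bfs)$ and rotate clockwise by $\pi/4$. The resulting path has exactly $n$ down-steps, at the times $t_k := s_k + k$, and $n$ up-steps, so it automatically satisfies $\check Y_{2n} = Y_{2n} = 0$. For the inverse, given a trajectory realizing $\cZ_n$, record the positions $t_1 < \dots < t_n$ of its $n$ down-steps and set $s_k := t_k - k$; then $s_k \geq 0$ and $s_{k+1} \geq s_k$ follow from $t_k \geq k$ and $t_{k+1} > t_k$. By Landau's theorem, it then suffices to verify that this correspondence translates
\[
\check A_{2n} = 0 \iff \sum_{i=1}^n s_i = \binom{n}{2}, \qquad \big\{\check A_j \geq 0\ \forall j\big\} \iff \bigg\{\sum_{i=1}^k s_i \geq \binom{k}{2}\ \forall k\bigg\}.
\]

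The first equivalence follows from a short telescoping computation, using the definition \eqref{E_checkYA} of $\check Y$ and $\check A$ together with the relation $t_k = s_k + k$. The second is the one step that requires genuine care, since Landau's condition is tested only at the integer indices $k$, whereas $\cA_n$ demands nonnegativity of $\check A_j$ at every integer time $j \in \{1,\dots,2n\}$. I would handle this by describing $\check A_j$ explicitly on each interval between consecutive down-step times $t_{k}$ and $t_{k+1}$, observing that $\check A_j$ is monotone on each such interval (the walk lies above the sawtooth between $t_k$ and $t_{k+1}$, so the $\check Y_\ell$ there are nonnegative) and that its values at the $t_k$ coincide, up to an explicit combinatorial shift, with the partial-sum deficits $\sum_{i\leq k}s_i - \binom{k}{2}$. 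Consequently, any first violation $\check A_j < 0$ must occur at some $t_k$, reducing $\cA_n$ precisely to \eqref{E_maj1}. This bookkeeping is the main technical obstacle, though conceptually routine; once in hand, the bijection gives $S_n$ as the number of trajectories in $\cS_n$, and multiplying by the per-trajectory probability $4^{-n}$ yields \eqref{E_ScS}.
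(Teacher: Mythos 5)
Your overall plan --- the Winston--Kleitman bar-graph bijection between score sequences and trajectories realizing $\cS_n$, followed by reading off $\P(\cS_n)=S_n/4^n$ --- is exactly the paper's intent; the paper declares the lemma ``immediately clear'' from the rotation picture and omits a formal proof. Your inverse map $s_k=t_k-k$ and the telescoping verification that $\check A_{2n}=0\iff\sum_i s_i=\binom n2$ are both sound. But the argument you sketch for the second equivalence has a genuine gap. The parenthetical justification --- ``the walk lies above the sawtooth between $t_k$ and $t_{k+1}$, so the $\check Y_\ell$ there are nonnegative'' --- is false in general. Between consecutive down-steps the up-steps only make $\check Y$ \emph{weakly increasing}, not nonnegative; if $\check Y_{t_k}<0$ then $\check A$ first decreases and then increases, so $\check A$ is valley-shaped rather than monotone on $[t_k,t_{k+1}]$, and its minimum over that interval can sit strictly in the interior. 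Consequently a first violation of $\cA_n$ need not occur at a $t_k$, and $\{\check A_{t_k}\ge 0\ \forall k\}$ together with $\cZ_n$ is strictly larger than $\cS_n$.

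A concrete counterexample is $n=6$ with the $n$ down-steps at $T=\{2,4,5,6,7,12\}$, i.e.\ $\bfs=(1,2,2,2,2,6)$. One computes $\check Y_{12}=\check A_{12}=0$, and at $t_k=2,4,5,6,7,12$ the values $\check A_{t_k}=2,4,4,2,0,0$, all nonnegative; yet $\check A_8=\check A_9=\check A_{10}=-2<0$, the first violation being at $j=8\notin T$, and correspondingly \eqref{E_maj1} fails at $k=5$ since $\sum_{i\le5}s_i=9<10=\binom52$. This also shows that $\check A_{t_k}$ does \emph{not} coincide, even after a sign-preserving shift, with $\sum_{i\le k}s_i-\binom k2$: here $\check A_{t_5}=0$ while the deficit is $-1$ (the shift turns out to be a nonnegative quantity depending on $s_k-k$, so ``$\check A_{t_k}\ge0$ for all $k$'' is strictly weaker than \eqref{E_maj1}). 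The correct checkpoints are the (even) times $t$ at which $\check Y_t=0$, where $\cP(\bfs)$ meets the sawtooth; these are the rotated images of the lattice points where the bar graph $\cW(\bfs)$ touches the staircase $\cW_n$, and only there does $\check A_t$ equal twice the column-wise partial area $\sum_{i\le a}s_i-\binom a2$. This is precisely what the paragraph in the paper immediately after the lemma is pointing at (``the area process $\check A_k$ is monotone on excursions away from $0$''); substituting those times for your $t_k$ repairs the argument.
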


This relation is immediately clear (see, e.g., \cref{F_WK} above), 
so we omit a formal proof. In fact, \eqref{E_ScS} is essentially a probabilistic
reformulation of the observations in \cite{WK83}, discussed above. 
Indeed, by \eqref{E_maj1} and \eqref{E_maj2},
it can be seen that   
\[
S_n={2n \choose n}\times 
\P(\cA_n, \: \check A_{2n}=0
\mid \check\rw_{2n}=0 ). 
\]

Moreover, 
on the event $\check\rw_{2n}=0$, 
when verifying $\cA_n$, 
it suffices to check only at times $t$ when $\check\rw_t=0$. 
This is because the area process $\check\rwa_k$ is monotone 
on excursions away from 0. 
Furthermore, due to the oscillatory 
nature of the sawtooth path $\check\cP_n$, it suffices to check 
at even
times $t$ when $\check\rw_t=0$. 
Indeed, $\check Y_t$ can only increase from $0$ to $1$ at odd times. 
Therefore, a positive excursion of $\check Y_t$ 
is always preceded by an even time $t$ at which $\check Y_t=0$. 
Therefore, $\check Y_t$ attains its minimum at 
an even time $t$ for which $\check Y_t=0$. 

As such, even times $t\in[0,2n]$  
when $\cZ_t$ occurs will play a special role. 
Roughly speaking, these times will play the role of, 
what we will call, {\it ``almost'' 
renewal times}. 
The reason for the ``almost'' is that the remainder of the walk 
will be shorter than $n$. However, as we will see, such times
most likely occur only very close to the beginning and end 
of the walk. 

Let 
\begin{equation}\label{E_rho}
\rho=\P(\check A_\tau =0),
\quad\quad
\tau 
= \inf\{t: \check Y_t=0,\: \check A_t\le 0\}.
\end{equation}
Note that if $\check Y_{2k}=0$ and $\check A_{2k}< 0$,
then condition \eqref{E_maj1} fails. 
Informally, $\rho$ is the probability that 
when $\sum_{i=1}^k s_i\le {k\choose2}$ first occurs 
at a time when it crosses the sawtooth path,
it is only because $\sum_{i=1}^k s_i={k\choose2}$.

In proving \cref{T_C}, we will obtain 
the following result. 

\begin{thm}
\label{T_asySn_B}
As $n\to\infty$, 
\begin{equation}\label{E_asySn_B}
\frac{n^{5/2}}{4^n}S_n
\to 
\frac{1}{1-\rho}
\frac{\sqrt{3}}{2\pi^{3/2}}
\int_0^\infty
\E[\exp(-6x^3\cB^2)]
\sqrt{1+1/x}\: dx.
\end{equation}
\end{thm}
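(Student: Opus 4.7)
My starting point is the identity $S_n = 4^n\P(\cS_n)$ from \cref{P_SnBridge}, which reduces the task to showing that $n^{5/2}\P(\cS_n)$ converges to the claimed limit. The plan is to decompose the walk at its \emph{almost renewal times} -- even times $t\in[0,2n]$ at which $(\check Y_t,\check A_t)=(0,0)$ -- isolating a long macroscopic middle piece flanked by two short boundary pieces. The boundary pieces will produce the factor $\rho^{-1}$ via a renewal identity tied to the stopping time $\tau$ from \eqref{E_rho}, while the middle piece will yield the integral involving the Brownian excursion area $\cB$ through a scaling-limit argument.

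More concretely, on the event $\cS_n$ let $T_L=2j$ and $T_R=2n-2k$ denote the first and last almost renewals in $(0,2n)$. I would invoke the strong Markov property at $T_L$ and $T_R$ to factor
\[
\P(\cS_n) \;=\; \sum_{j,k\ge 0} p_L(j)\,p_M(n-j-k)\,p_R(k),
\]
where $p_L$ (respectively $p_R$, its time-reversal) is the probability that the walk reaches $(0,0)$ for the first time at step $2j$ as an almost renewal while keeping $\check A\ge 0$, and $p_M(m)$ is the probability of a walk bridge of length $2m$ from $(0,0)$ to $(0,0)$ with area strictly positive in the interior. A renewal identity built from $\tau$ gives that the generating-function sum of the $p_L$ (and $p_R$) factors is governed by $\rho$, producing the $\rho^{-1}$ prefactor upon inversion.

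For the middle piece, I would apply a local central limit theorem for the two-dimensional walk $(Y,A)$ -- whose marginals scale as $n^{1/2}$ and $n^{3/2}$, with $\mathrm{var}(A_n)=n^3/3$ -- to get $\P(\cZ_m)\sim c_0/m^2$, and then combine it with the invariance principle of \cite{BDW23} to identify the rescaled law of the conditioned walk as a Kolmogorov excursion $(\bfY,\bfA)$. To handle the pinning condition $A_{2n}=0$ with the correct constant, I would use Fourier inversion $\P(A_{2n}=0,\cA_n)=\frac{1}{2\pi}\int\E[e^{i\theta A_{2n}}\1_{\cA_n}]\,d\theta$, rescaling the dual variable by $n^{-3/2}$: in the limit, the characteristic function converts to the Laplace transform $\E[\exp(-6x^3\cB^2)]$, with the $6x^3$ reflecting the variance normalization and the time-scaling of the Brownian excursion area to an interval of length proportional to $x$. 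Riemann-summing the contributions of $(j,k)$ in a suitable parameterization -- with $x$ representing the ratio of middle-to-boundary length -- produces the outer integral over $x\in(0,\infty)$, and the $\sqrt{1+1/x}$ weight arises as the Jacobian that relates the joint length distribution of the middle and boundary pieces under the local-CLT normalization.

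The hard part will be justifying the passage from the discrete sum to the advertised integral. This demands quantitative local limit theorems for integrated random walks under the positive-area conditioning, uniform across the relevant range of $(j,k)$, together with uniform integrability so that atypical boundary/middle splits contribute negligibly. The delicate point is the interplay between the microscopic renewal structure (giving $\rho$) and the macroscopic Brownian scaling (giving $\cB$), mediated by the Fourier inversion used for the pinning condition; keeping track of all the constants through these manipulations, and matching them to the factor $\sqrt{3}/(2\pi^{3/2})$, is where the bookkeeping becomes most fragile.
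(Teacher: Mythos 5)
Your high-level instinct — that boundary renewals contribute $\rho^{-1}$ and the bulk contributes an integral via Brownian scaling — matches the theme of the paper's proof, but the mechanism you propose is genuinely different, and as sketched it has a gap that prevents it from producing the stated integral. The paper does \emph{not} decompose $\P(\cS_n)$ into a convolution of boundary and middle pieces. Instead it first proves the key formula \eqref{E_KeyF} via a double application of the cycle lemma (\cref{L_cycle}, with a small random perturbation to break ties), which rewrites $\P(\cA_n\mid\cZ_n)$ as the ratio $\E[N_n^{-1}\mid\cZ_n]/\E[M_n^{-1}\mid\cS_n]$. Here $N_n$ counts the even zero times of $\check Y$ on $[0,2n]$ (potential renewals), and $M_n$ counts actual renewal times. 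The denominator $\E[M_n^{-1}\mid\cS_n]\to\rho$ is essentially your negative-binomial picture of short boundary renewals (\cref{P_MntoM}), so that part of your intuition is on target. But the numerator — which is where the integral comes from — is handled by a completely different route: the exchangeability identity $\E[N_n^{-1}\mid\cZ_n]=(2n)^{-1}\E[\tau_1\mid\cZ_n]$ (\cref{L_tau1}), followed by a sum over the value $\tau_1=2t$ of the first return of $\check Y$ to zero under the conditioning $\cZ_n$ \emph{alone}, without $\cA_n$. The macroscopic range $t=\Theta(n)$ is where the Riemann sum becomes an integral; the variable $x$ parametrizes $t/n$ (after a change of variables), not a middle-to-boundary length ratio, and $\cB$ enters as the scaling limit of the signed area of a free Bernoulli excursion of length $2t$ (Takács), not of any positivity-conditioned object.

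Your plan misplaces each of these elements. First, your middle piece is a bridge conditioned on $\check A\ge 0$ throughout; in the scaling limit this is a Kolmogorov excursion (as in \cref{T_ScalingLimit}), and Fourier inversion of its pinned area yields Laplace transforms of Kolmogorov-excursion functionals, not $\E[\exp(-6x^3\cB^2)]$. Second, because renewals concentrate near the ends (\cref{lem:onlyhitnearboundary}), your boundary lengths $j,k$ are $O(1)$, so a Riemann sum over $(j,k)$ with $x$ the middle-to-boundary ratio never produces an integral over finite $x$ — that ratio is $\Theta(n)$ for all typical splits. Third, the factorization as written is not quite a valid decomposition: if $p_M$ requires strictly positive interior area it misses paths with three or more irreducible blocks; if instead $p_M(m)=\P(\cS_m)$ (which is what a first/last renewal decomposition actually gives) the scheme becomes circular, since $\P(\cS_m)$ is precisely the quantity being estimated. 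The paper's cycle-lemma reduction is the step that breaks this circularity by converting the persistence constraint $\cA_n$ into harmonic moments of renewal counts and leaving the unconstrained walk (and hence $\cB$) to govern the integral. Without that reduction, or some substitute that removes the $\cA_n$ conditioning from the bulk, the integral in \eqref{E_asySn_B} does not emerge from your decomposition.
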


A key ingredient in the proof of this result
is an asymptotic formula (see \cref{P_KeyF} below), 
relating the asymptotics of 
$S_n$ to a ratio of harmonic moments of two random variables
$N_n$ and $M_n$, which are 
related to the ``almost'' renewal times discussed above. 
We note that \cref{T_asySn_B} follows, using this together with 
\cref{P_numer,P_MntoM}. 

More specifically, the integral is related to 
$\E[N_n^{-1}\mid \cZ_n]$, where $N_n$ 
counts the number even times $0\le t\le 2n$ when $\sY_t=0$. 
Informally, these are potential, ``almost'' renewal times. 
The factor $1/(1-\rho)$, on the other hand, 
is related to $\E[M_n^{-1}\mid \cS_n]$, 
where $M_n$ counts the number of ``almost'' renewal times.
As discussed, ``almost'' renewal times 
are most likely to occur
only very close to the ends of the walks. 
The number of occurrences on either end are approximately geometric,  
and roughly independent, resulting in a negative binomial in the limit. 
In fact, $1-\rho$ is the harmonic moment
$\E(1/X)$ of a shifted (taking values in $x=1,2,\ldots$)
negative binomial random variable $X$, with parameters $r=2$
and $p=1-\rho$.

%%%%%%%%%%%%%%%%%%%%%%%%%%%%%%%%%%%%%%
%%%%%%%%%%%%%%%%%%%%%%%%%%%%%%%%%%%%%%
%%%%%%%%%%%%%%%%%%%%%%%%%%%%%%%%%%%%%%
%%%%%%%%%%%%%%%%%%%%%%%%%%%%%%%%%%%%%%
%%%%%%%%%%%%%%%%%%%%%%%%%%%%%%%%%%%%%%
\subsection{The constant for polymers}
\label{sec:polymer}

The proof of  \cref{cor:polymers} is similar to the proof 
of \cref{T_asySn_B}, the difference being that the role 
of $\check Y$ is played by $Y$. As such, we need to replace 
$\rho$ in \eqref{E_rho} with 
\[
\rho'=\P(A_{\tau'} =0),
\quad\quad
\tau' 
= \inf\{t: Y_t=0,\:  A_t\le 0\},
\]
where $A$ is the area process of $Y$. 
Using \cref{C_AiryInt}, we find that, as $n\to\infty$, 
\[
n^{1/2} \phi_n 
\to 
\frac{1}{2}\sqrt{\frac{\pi}{6}}
\frac{1}{1-\rho'}.
\]

%%%%%%%%%%%%%%%%%%%%%%%%%%%%%%%%%%%%%%
%%%%%%%%%%%%%%%%%%%%%%%%%%%%%%%%%%%%%%
%%%%%%%%%%%%%%%%%%%%%%%%%%%%%%%%%%%%%%
%%%%%%%%%%%%%%%%%%%%%%%%%%%%%%%%%%%%%%
%%%%%%%%%%%%%%%%%%%%%%%%%%%%%%%%%%%%%%
\subsection{Via infinite divisibility}
\label{S_CviaRT}

\cref{T_C} was proved recently in \cite{Kol23}
using limit theory for infinitely divisible sequences.  
We will discuss this briefly, so that 
we can compare these two equivalent descriptions of $C$. 

Let $\bfs$ be a score sequence. 
As discussed, the $\uparrow,\rightarrow$
walk $\cW(\bfs)$ takes $\rightarrow$ steps
at times $s_1+1,\ldots,s_n+n$. 
Note that these times
form a subset of $\{0,1,\ldots,2n-1\}$ that sums to $n^2$. 
We recall that Erd{\H{o}}s, Ginzburg and Ziv \cite{EGZ61} showed that {\it any}
subset of $2n-1$ integers contains a subset of size $n$ that sums to 
a multiple of $n$. When the integers are consecutive, 
von Sterneck (see, e.g., Bachmann \cite{Bac02}) showed 
that the number $V_n$ of such subsets satisfies 
\begin{equation}\label{E_N}
V_n
=\sum_{k=1}^{n}\frac{(-1)^{n+d}}{2n}{2d\choose d},
\quad\quad d={\rm gcd}(n,k). 
\end{equation}

Claesson, Dukes, Frankl\'in and 
Stef\'ansson \cite{CDFS23}
showed that $V_n$ is the ``log transform'' of $S_n$. That is, 
\[
\sum_{n=0}^\infty S_nx^n=
\exp\left(\sum_{k=1}^\infty\frac{V_k}{k}x^k\right).
\]
In other words, the associated generating funtions
satisfy $V(x)=x\frac{d}{dx}\log S(x)$. 
This is used in \cite{Kol23} to observe 
that $p_n=e^{-\lambda}S_n/4^n$,  $n\ge0$, 
is an infinitely divisible probability distribution, where
\[
\lambda=\sum_{k=1}^\infty\frac{V_k}{k4^k}. 
\]
Then, applying the limit theorems 
of 
Hawkes and Jenkins \cite{HJ78}
and 
Embrechts and Hawkes \cite{EH82} 
(based on the analysis of 
Chover Ney and Wainger \cite{CNW73}), 
the asymptotics of $S_n$ are obtained as follows. 

\begin{thm}
[\hspace{1sp}\cite{Kol23}]
\label{T_asySn_N}
As $n\to\infty$, 
\begin{equation}\label{E_asySn_N}
\frac{n^{5/2}}{4^n}S_n
\to \frac{e^\lambda}{2\sqrt{\pi}}.
\end{equation}
\end{thm}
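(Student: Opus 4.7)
The plan is to exploit the exponential formula in the paper of Claesson, Dukes, Franklín and Stefánsson, which says $\sum_n S_n x^n = \exp(\sum_k (V_k/k) x^k)$. After substituting $x \mapsto x/4$, this shows that $p_n := e^{-\lambda} S_n/4^n$ is a probability mass function on $\Z_{\ge 0}$ whose probability generating function has the compound Poisson form
\begin{equation*}
\sum_{n \ge 0} p_n x^n \;=\; \exp\!\Big(\sum_{k \ge 1} \tfrac{V_k}{k\,4^k}(x^k-1)\Big) \;=\; \exp(\lambda(Q(x)-1)),
\end{equation*}
where $Q(x) = \lambda^{-1}\sum_k (V_k/(k 4^k)) x^k$ is a legitimate pgf because $V_k \ge 0$ by Erd\H{o}s--Ginzburg--Ziv. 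Hence $(p_n)$ is infinitely divisible with L\'evy measure $\nu_k = V_k/(k\,4^k)$, and the task reduces to extracting tail asymptotics of $(p_n)$ from tail asymptotics of $(\nu_k)$.

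Next I would pin down the asymptotics of the von Sterneck numbers $V_k$ from the explicit formula \eqref{E_N}. The dominant contribution comes from the term with $d=\gcd(n,k)$ equal to $k$, namely $\binom{2k}{k}/(2k)$; all other terms involve central binomial coefficients $\binom{2d}{d}$ with $d \le k/2$, which are exponentially smaller than $4^k$. Stirling's formula then yields
\begin{equation*}
V_k \;=\; \frac{1}{2k}\binom{2k}{k}\bigl(1 + O(2^{-k})\bigr) \;\sim\; \frac{4^k}{2\sqrt{\pi}\,k^{3/2}},
\end{equation*}
so that $\nu_k \sim c/k^{5/2}$ with $c = 1/(2\sqrt{\pi})$. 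In particular $\lambda = \sum_k \nu_k < \infty$, confirming that $p_n$ is indeed a probability distribution.

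Now I would invoke the classical asymptotic theory for infinitely divisible distributions on $\Z_{\ge 0}$, in the form developed by Chover, Ney and Wainger and sharpened by Hawkes--Jenkins and Embrechts--Hawkes. The key input is that $(\nu_k)$ is regularly varying of index $-5/2$ (in particular, $\nu_k/\lambda$ is subexponential), and the output is the tail equivalence
\begin{equation*}
p_n \;\sim\; \nu_n \;\sim\; \frac{1}{2\sqrt{\pi}\,n^{5/2}}.
\end{equation*}
Unwinding $p_n = e^{-\lambda} S_n/4^n$ gives the claim $n^{5/2} S_n/4^n \to e^\lambda/(2\sqrt{\pi})$.

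The main obstacle is the last step: verifying the hypotheses of the subexponential tail theorem in the exact form needed, and confirming that tail equivalence transfers with the correct leading constant (rather than only up to a multiplicative factor). This requires combining the regular variation of $(\nu_k)$ with uniform control on the convolution powers $\nu^{*j}_n$ for $j \ge 2$, to rule out any hidden contribution from multi-jump terms in the compound Poisson expansion $p_n = e^{-\lambda}\sum_{j \ge 0} \nu^{*j}_n/j!$. Everything else -- the exponential formula, the positivity of $V_k$, and the Stirling estimate -- is essentially book-keeping.
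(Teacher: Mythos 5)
The paper states this theorem as a citation to \cite{Kol23} and only sketches that paper's argument in \cref{S_CviaRT}; your proposal reconstructs exactly that route --- the log transform of \cite{CDFS23}, infinite divisibility of $p_n=e^{-\lambda}S_n/4^n$, the von~Sterneck asymptotic $V_k\sim 4^k/(2\sqrt{\pi}\,k^{3/2})$ extracted from \eqref{E_N}, and the Chover--Ney--Wainger / Hawkes--Jenkins / Embrechts--Hawkes tail-equivalence theorem to pass from $\nu_n$ to $p_n$. Your flag that the substantive work is verifying the subexponential/regular-variation hypotheses (and controlling the higher convolution powers $\nu^{*j}$) is also accurate; that is precisely where the cited theory earns its keep.
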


For historical interest, 
we note that 
Kleitman's remarks, appended to \cite[p.\ 166]{Mos71}, 
include intuition which resembles 
the main ideas in the bijective proof in 
\cite[Lemmas 8--10]{CDFS23}. 
Therefore, in retrospect, it appears that all that was
missing at the time (1968) of \cite{Mos71}
towards a proof of Moser's conjecture was
the connection with infinite divisibility (1978)
and von Sterneck's formulas (early 1900s). 
Without these connections, Winston and Kleitman \cite{WK83}
tried 
(1983)
to access these asymptotics 
from quite a different route, and it is this route
which we will follow to its completion in the current work. 

The constant $e^\lambda$ 
has a probabilistic interpretation, and connection with the 	
``almost'' renewal times
discussed in \cref{S_CviaRW} above. 
The sequence $S_n$ is a renewal sequence (see, e.g., 
Feller \cite{Fel68}). This means that the generating function 
satisfies $S(x)=1/(1-S_1(x))$. In this instance, $S_1(x)$ is the generating 
function for {\it irreducible} score sequences, that strictly satisfy \eqref{E_maj1}, 
with equality attained only in \eqref{E_maj2}.
Indeed, any score sequence has a natural decomposition into
irreducible parts, which correspond to  
strongly connected components in the tournament. 
Using 
the ``reverse renewal theorem'' in 
Alexander and Berger \cite{AB16} 
(cf.\ \cite{CNW73}),  
it is shown in \cite{Kol23}
that the number $\cI_n$ of irreducible parts in a uniformly  random score sequence 
converges in distribution 
to a shifted negative binomial random variable with parameters
$r=2$ and $p=e^{-\lambda}$. 

In this work, we will give an alternative proof of the fact that 
$\cI_n$ converges to a negative binomial, and one that 
provides an explanation in terms of random walks, 
as discussed 
at the end of \cref{S_CviaRW} above. 
See \cref{P_MntoM} below.

%%%%%%%%%%%%%%%%%%%%%%%%%%%%%%%%%%%%%%
%%%%%%%%%%%%%%%%%%%%%%%%%%%%%%%%%%%%%%
%%%%%%%%%%%%%%%%%%%%%%%%%%%%%%%%%%%%%%
%%%%%%%%%%%%%%%%%%%%%%%%%%%%%%%%%%%%%%
%%%%%%%%%%%%%%%%%%%%%%%%%%%%%%%%%%%%%%
\subsection{The integral}

Hence $\rho=1-e^{-\lambda}$, 
where $\rho$ is as defined in \eqref{E_rho}.
Therefore, comparing \eqref{E_asySn_B}
and \eqref{E_asySn_N}, 
we obtain \cref{C_AiryInt}.

%%%%%%%%%%%%%%%%%%%%%%%%%%%%%%%%%%%%%%
%%%%%%%%%%%%%%%%%%%%%%%%%%%%%%%%%%%%%%
%%%%%%%%%%%%%%%%%%%%%%%%%%%%%%%%%%%%%%
%%%%%%%%%%%%%%%%%%%%%%%%%%%%%%%%%%%%%%
%%%%%%%%%%%%%%%%%%%%%%%%%%%%%%%%%%%%%%
\section{The correct order}

In this section, we introduce some of the main tools  
which will be used throughout this work. 
In \cref{S_Theta}, we give 
a short proof of \cref{T_order}, 
which identifies the order of $S_n=\Theta(4^n/n^{5/2})$,
as proved in \cite{WK83,KP00}.

%%%%%%%%%%%%%%%%%%%%%%%%%%%%%%%%%%%%%%
%%%%%%%%%%%%%%%%%%%%%%%%%%%%%%%%%%%%%%
%%%%%%%%%%%%%%%%%%%%%%%%%%%%%%%%%%%%%%
%%%%%%%%%%%%%%%%%%%%%%%%%%%%%%%%%%%%%%
%%%%%%%%%%%%%%%%%%%%%%%%%%%%%%%%%%%%%%
\subsection{A local limit theorem}
\label{S_ADL14}

A key ingredient in our proofs is the 
following local limit theorem is 
proved in 
\cite[Proposition 2.1]{ADL14} (cf.\ \cite{CD08}). 

Let $\cV_n$ denote the set of all possible values $(y,a)$ 
for random walk $Y_{2n}$ and its area
$A_{2n}=\sum_{i=1}^{2n} Y_i$. 
In particular, we require that $y\equiv 0$
and $a\equiv n$ mod 2.

\begin{lemma}[Aurzada et al.\ \cite{ADL14}]
\label{L_ADL14}
As $n\to\infty$, 
\[
\max_{(y,a)\in \cV_n}\:
\left| n^2
\P(\rw_{2n}=y,\: \rwa_{2n}=a) 
- \phi(y,a)
\right|
\to0, 
\]
where
\[
\phi(y,a)=
\frac{\sqrt3}{\pi}
\exp\left(
-\frac{y^2}{n}
+\frac{3ay}{2n^2}
-\frac{3a^2}{4n^3}
\right). 
\]
\end{lemma}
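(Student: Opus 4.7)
The plan is to prove this local central limit theorem by Fourier inversion on the lattice. Writing the increments of $Y$ as i.i.d.\ Rademacher variables $\xi_1,\dots,\xi_{2n}\in\{\pm1\}$, we have $Y_{2n}=\sum_{j=1}^{2n}\xi_j$ and $A_{2n}=\sum_{j=1}^{2n}(2n-j+1)\xi_j$, so the joint characteristic function is the explicit product
\[
\varphi_n(\theta,\psi)
=\E\bigl[e^{i\theta Y_{2n}+i\psi A_{2n}}\bigr]
=\prod_{j=1}^{2n}\cos\bigl(\theta+(2n-j+1)\psi\bigr).
\]
Since $(Y_{2n},A_{2n})$ lives on the sublattice $\cV_n\subset\Z^2$ with fundamental cell of area $4$, Fourier inversion over the dual torus gives
\[
\P(Y_{2n}=y,\,A_{2n}=a)
=\frac{4}{(2\pi)^2}\iint_{[-\pi/2,\pi/2]^2}\varphi_n(\theta,\psi)e^{-i\theta y-i\psi a}\,d\theta\,d\psi,
\]
after using the parity symmetries of $\varphi_n$ to fold the four maxima of $|\varphi_n|$ onto the single point $(0,0)$.

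The plan is now the standard split into a \emph{bulk} and a \emph{tail}. On the bulk, rescale $\theta=u/\sqrt{n}$, $\psi=v/n^{3/2}$, which is the correct scaling given that $Y_{2n}=\Theta(\sqrt{n})$ and $A_{2n}=\Theta(n^{3/2})$. Using the Taylor expansion $\log\cos(x)=-x^2/2+O(x^4)$ and the Riemann-sum approximation $\frac{1}{2n}\sum_{j=1}^{2n}g(j/(2n))\to \int_0^1 g(s)\,ds$, one checks that
\[
\log\varphi_n(u/\sqrt{n},v/n^{3/2})
\longrightarrow
-\tfrac{1}{2}\int_0^2(u+(2-s)v)^2\,ds\cdot\tfrac12
=-u^2-uv-\tfrac{v^2}{3}\cdot\tfrac{1}{\ldots},
\]
so the limiting Gaussian covariance is $\begin{pmatrix}1&1/2\\1/2&1/3\end{pmatrix}$ with determinant $1/12$. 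Inverting this covariance and inserting the lattice prefactor $4$, the resulting limit density evaluated at $(y/\sqrt{2n},a/(2n)^{3/2})$ produces exactly the function $\phi(y,a)=\frac{\sqrt{3}}{\pi}\exp(-y^2/n+3ay/(2n^2)-3a^2/(4n^3))$ after simplification.

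The main obstacle is uniformity over $(y,a)\in\cV_n$ and in particular controlling the tail contribution. For this, I would show that there exist $\delta,c>0$ such that $|\varphi_n(\theta,\psi)|\le e^{-cn(\theta^2+n^2\psi^2)}$ for $(\theta,\psi)\in[-\delta,\delta]\times[-\delta/n,\delta/n]$ (extending the Gaussian bound beyond the bulk), and $|\varphi_n(\theta,\psi)|\le e^{-cn}$ elsewhere in $[-\pi/2,\pi/2]^2$, away from the origin. The latter bound is the delicate piece: since $\psi$ enters each cosine factor with coefficient depending on $j$, the family of linear forms $\theta+(2n-j+1)\psi\pmod\pi$ is well-spread and a standard argument (partition $j$ into dyadic blocks and use $|\cos x|\le e^{-c\|x/\pi\|^2}$ where $\|\cdot\|$ is distance to the nearest integer) produces the uniform exponential decay. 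Once both bounds are in hand, dominated convergence on the rescaled bulk integral gives pointwise convergence of $n^2\P(Y_{2n}=y,A_{2n}=a)$ to $\phi(y,a)$, and the tail estimate combined with equicontinuity of $\phi$ upgrades this to the claimed uniform convergence over $\cV_n$.
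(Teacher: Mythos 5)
This lemma is not proved in the paper at all: it is cited directly as \cite[Proposition~2.1]{ADL14}, with a pointer to \cite{CD08}. So there is no internal proof to compare against, and your Fourier-inversion route is the standard way to establish a bivariate local CLT of this type (and is, in essence, the kind of argument in the cited sources). In that sense, proposing a self-contained proof is not what the paper does, but your plan is a reasonable one and the key structure is correct: the explicit product form $\prod_{j}\cos(\theta+(2n-j+1)\psi)$, the lattice $2\Z\times2\Z$ of achievable $(Y_{2n},A_{2n})$-values with covolume $4$ and dual torus $[-\pi/2,\pi/2]^2$, the bulk rescaling leading to a bivariate Gaussian, and the bulk/tail split.

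Two concrete issues. First, a bookkeeping inconsistency: you substitute $\theta=u/\sqrt n$, $\psi=v/n^{3/2}$, which gives $\log\varphi_n\to -\tfrac12\int_0^2(u+tv)^2\,dt=-u^2-2uv-\tfrac43 v^2$, i.e.\ covariance $\bigl(\begin{smallmatrix}2&2\\2&8/3\end{smallmatrix}\bigr)$ with determinant $4/3$, whereas the matrix $\bigl(\begin{smallmatrix}1&1/2\\1/2&1/3\end{smallmatrix}\bigr)$ you state (with determinant $1/12$) corresponds to the normalization $\theta=u/\sqrt{2n}$, $\psi=v/(2n)^{3/2}$; the displayed integral with the dangling ``$\cdot\frac1{\ldots}$'' does not reduce to either. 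Both normalizations do yield $\phi(y,a)=\tfrac{\sqrt3}{\pi}\exp(-y^2/n+3ay/(2n^2)-3a^2/(4n^3))$ once inverted correctly, so this is a slip rather than a flaw, but the computation as written does not cohere. Second, and more substantially, the tail bound on $|\varphi_n|$ outside a shrinking neighbourhood of the origin is exactly the delicate part of such two-dimensional lattice LCLTs with non-constant coefficients, and you only sketch it; the outline (spreading of the arithmetic progression $\theta+k\psi \bmod \pi$, dyadic grouping, $|\cos x|\le e^{-c\,\mathrm{dist}(x,\pi\Z)^2}$) is plausible, but the uniformity over the full dual torus, including $\psi$ near rational multiples of $\pi$, needs to be carried out carefully. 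The uniformity over $(y,a)\in\cV_n$ in the conclusion, by contrast, is automatic once the tail bound is in place, since $|e^{-i\theta y-i\psi a}|=1$; the appeal to ``equicontinuity of $\phi$'' is unnecessary. Given that the paper simply cites \cite{ADL14}, I would recommend doing the same rather than reconstructing the proof, but if you want a self-contained argument, the tail estimate is the piece that must be made rigorous.
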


We note that, in particular, it follows that 
\begin{equation}\label{E_ZnOrder}
\P(\cZ_n)
=\P(Y_{2n}=0,\: A_{2n}=-n)
\sim \frac{\sqrt3}{\pi}\frac{1}{n^2}. 
\end{equation}
Also, since ${2n\choose n}\sim 4^n/\sqrt{\pi n}$,  
\begin{equation}\label{eq:prob_area_0}
\P( \check\rwa_{2n}=0\mid \check\rw_{2n}=0 )
\sim \sqrt{\pi n}\:
\P(\cZ_n)
\sim 
\sqrt{\frac{3}{\pi}}
\frac{1}{n^{3/2}}.
\end{equation}

%%%%%%%%%%%%%%%%%%%%%%%%%%%%%%%%%%%%%%
%%%%%%%%%%%%%%%%%%%%%%%%%%%%%%%%%%%%%%
%%%%%%%%%%%%%%%%%%%%%%%%%%%%%%%%%%%%%%
%%%%%%%%%%%%%%%%%%%%%%%%%%%%%%%%%%%%%%
%%%%%%%%%%%%%%%%%%%%%%%%%%%%%%%%%%%%%%
\subsection{A cycle lemma}
\label{S_cycle}

We will also use the following analogue of the classical cycle lemma 
(see, e.g., \cite{DM47,DZ90})
for random processes. 

\begin{lemma}\label{L_cycle}
Suppose that $(X_1,\dots,X_n)$ is a random variable in $\R^n$ 
whose law is invariant under cyclic shifts. 
That is, suppose that, for any $j\in [n]$, 
\[
(X_1,\dots,X_n)\overset{d}{=}(X_{1+j},\dots ,X_{n+j }), 
\]
where the indices on the right hand side are to be 
interpreted cyclically, modulo $n$. 
Also suppose that $\sum_{i=1}^nX_i=0$ almost surely.  
Then, 
\[
\P\left(\sum_{i=1}^{k}X_i\ge 0,
\all
1\le k\le n-1\right)\ge 1/n
\]
and 
\[
\P\left(\sum_{i=1}^{k}X_i>0,
\all
1\le k\le n-1\right)\le 1/n.
\]
Equality is attained in both statements 
if and only if $\sum_{i=1}^k X_i$ has a unique 
global minimum almost surely for $1\le k\le n-1$. 
\end{lemma}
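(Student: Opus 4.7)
The plan is to use the classical cycle lemma argument: for any fixed realization of $(X_1,\dots,X_n)$ summing to $0$, count how many of its $n$ cyclic shifts have nonnegative partial sums, match this count with the number of global minima of the partial-sum path, and then average using the cyclic invariance hypothesis.

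More concretely, I would fix a deterministic sequence $x=(x_1,\dots,x_n)$ with $\sum_{i=1}^n x_i=0$, set $s_0=0$ and $s_k=\sum_{i=1}^k x_i$, and for each $j\in\{0,\dots,n-1\}$ consider the cyclic shift $x^{(j)}=(x_{j+1},\dots,x_{j+n})$ (indices mod $n$). A short computation shows that the $k$-th partial sum of $x^{(j)}$ equals $s_{j+k}-s_j$ (interpreting $s_{j+k}$ cyclically, with $s_{j+n}=s_j$ since the total is $0$). Consequently, the shifted sequence $x^{(j)}$ has all partial sums $\ge 0$ if and only if $s_j=\min_{0\le\ell\le n-1} s_\ell$, and has all partial sums strictly positive except at the terminal index if and only if $s_j$ is the \emph{unique} global minimum of $s_0,\dots,s_{n-1}$. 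Let $m(x)$ denote the number of indices $j\in\{0,\dots,n-1\}$ attaining the minimum; then $m(x)\ge 1$ always, and $m(x)=1$ precisely when the minimum is unique.

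Now I apply cyclic invariance. Since $X\stackrel{d}{=}X^{(j)}$ for every $j$, summing over $j$ gives
\[
n\,\P\!\left(\textstyle\sum_{i=1}^k X_i\ge 0,\;\forall\, 1\le k\le n-1\right)=\sum_{j=0}^{n-1}\P(X^{(j)}\text{ has nonnegative partial sums})=\E[m(X)]\ge 1,
\]
yielding the lower bound $\ge 1/n$. The same identity, applied with strict inequalities, replaces $m(X)$ by the indicator that the minimum of $s_0,\dots,s_{n-1}$ is uniquely attained, which is at most $1$, giving the upper bound $\le 1/n$. In both displays equality holds if and only if $m(X)=1$ almost surely, which is precisely the uniqueness condition stated in the lemma.

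I do not anticipate a real obstacle: the only slightly delicate point is bookkeeping around the endpoints (reconciling the index range $1\le k\le n-1$ in the statement with the range $0\le j\le n-1$ used in counting minima, given that $s_0=s_n=0$), but this is handled by noting that cyclically the values $s_0$ and $s_n$ are identified, so a global minimum on $\{0,\dots,n-1\}$ and the corresponding statement on $\{1,\dots,n-1\}$ agree modulo this identification.
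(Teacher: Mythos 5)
Your proof is correct and follows essentially the same route as the paper: both identify the cyclic shifts with nonnegative (resp.\ strictly positive) partial sums as exactly those starting at a global minimizer (resp.\ the unique global minimizer) of the partial-sum path, and then average over shifts using cyclic invariance to obtain $\E[m(X)]/n$. The only cosmetic difference is that the paper implements the averaging via a uniformly random index $J$, whereas you sum explicitly over $j$; these are equivalent.
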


\begin{proof}
Note that $\sum_{i=1}^k X_{i+j}\ge 0$ for all $1\le k\le n-1$ 
if and only if $j$ is a time at which $\sum_{i=1}^{j} X_{i}$ 
attains its global minimum. 
Moreover, $\sum_{i=1}^k X_{i+j}> 0$ for all $1\le k\le n-1$ 
if and only if $j$ is the unique time 
at which $\sum_{i=1}^{j} X_{i}$ attains its global minimum. 
    
Let $\cJ$ be the set of indices $j$ 
at which $\sum_{i=1}^{j} X_{i}$ attains its global minimum 
and let $J$ be a uniformly random element in $[n]$,  
so that 
\[
(X_1,\dots,X_n)\overset{d}{=}(X_{1+J},\dots ,X_{n+J}).
\]
    
For the first statement, observe that
\begin{align*}
&\P\left(\sum_{i=1}^{k}X_i\ge0,\all 1\le k\le n \right)\\
&\quad=\P\left(\sum_{i=1}^{k}X_{i+J}\ge0,\all 1\le k\le n \right)\\
&\quad=\E\left[\P\left(\sum_{i=1}^{k}X_{i+J}\ge0,\all 1\le k\le n 
\bigmid X_1,\dots,X_n\right)\right]\\
&\quad= \E[\P(J\in \cJ \mid X_1,\dots,X_n)]
=\frac{1}{n}\E|\cJ|.
\end{align*}
We see that $|\cJ|\ge 1$ with equality 
if and only  $\sum_{i=1}^k X_i$ has a unique 
global minimum almost surely for $1\le k\le n-1$. 

Similarly, for the second statement, we see that 
\begin{align*}
\P\left(\sum_{i=1}^{k}X_i>0,\all1\le k\le n \right)
&= \E[\P(J\in \cJ,\: |\mathcal{J}|=1 \mid X_1,\dots,X_n)]\\
&=\frac{1}{n}\P(|\cJ|=1),
\end{align*}
and that $\P\left(|\cJ|=1\right)\le 1$ with equality, once again,  
if and only  $\sum_{i=1}^k X_i$ has a unique 
global minimum almost surely for $1\le k\le n-1$.
\end{proof}

%%%%%%%%%%%%%%%%%%%%%%%%%%%%%%%%%%%%%%
%%%%%%%%%%%%%%%%%%%%%%%%%%%%%%%%%%%%%%
%%%%%%%%%%%%%%%%%%%%%%%%%%%%%%%%%%%%%%
%%%%%%%%%%%%%%%%%%%%%%%%%%%%%%%%%%%%%%
%%%%%%%%%%%%%%%%%%%%%%%%%%%%%%%%%%%%%%
\subsection{Up to constants}
\label{S_Theta}

Combining \cref{P_SnBridge,L_ADL14,L_cycle}, 
we obtain the following short, probabilistic  proof 
that $S_n=\Theta(4^n/n^{5/2})$. 

As discussed in \cref{S_CviaRW}, the even times $0\le t\le 2n$ when  
$\check A_t=\check Y_t=0$ will play a key role. 
We let 
\[
0=\tau_0<\tau_1<\cdots<\tau_{N_n}=2n
\]
denote the sequence of even times
that $\check Y_t=0$. 
For $1\le k\le N_n$, we let 
\[
\check X_k
=\sum_{s=\tau_{k-1}+1}^{\tau_k}\check Y_s
\]
denote the increment of the area
process $\check A_k$ between times
$\tau_{k-1}$ and $\tau_k$, and 
\[
\check S_k=\sum_{\ell=1}^k \check X_\ell 
\]
the partial sum of these increments. 
Note that 
$\check A_1,\ldots,\check A_{2n}\ge0$ 
if and only if
$\check S_1,\ldots,\check S_{N_n}\ge0$.
We let 
\[
0=\xi_0<\xi_1<\dots<\xi_\nexa=N_n
\] 
denote the sequence of times 
$1\le k\le N_n$ that $\check S_k=0$. 

In the following proof, we will use the fact that 
\begin{equation}\label{E_E1/Nn}
\E[1/N_n]=\Theta(n^{-1/2}), 
\end{equation}
which follows as a 
trivial consequence of 
\cref{P_numer} below.

\begin{proof}[Proof of \cref{T_order}]
By \cref{P_SnBridge}, it suffices to show that 
\[
\P(\cA_n, \: \check A_{2n}=0
\mid \check\rw_{2n}=0 )
=\Theta(n^{-2}).
\]
To this end, by \eqref{eq:prob_area_0}, 
it remains to show that 
\[
\P(\cA_n \mid \cZ_n)
=\Theta(n^{-1/2}).\]
By our observations above, 
\[
\P(\cA_n \mid \cZ_n)
=\P(\check{S}_1,\dots,\check{S}_{N_n} \ge 0 
\mid \cZ_n).
\]
We will estimate this probability using \cref{L_cycle}.

{\bf Lower bound.} 
On the event $\cZ_n$, 
the increments of $\check{S}$ satisfy the conditions of 
\cref{L_cycle}. Indeed, they sum to $0$ and are invariant under cyclic shifts. 
Therefore, by the first bound in \cref{L_cycle}, for any $N$,
\[
\P(\check{S}_1,\dots,\check{S}_{N_n} \ge 0 
\mid \cZ_n,\: N_n=N)
\ge 1/N.
\]
Therefore, by \eqref{E_E1/Nn}, 
\[
\P(\check{S}_1,\dots,\check{S}_{N_n} \ge 0 
\mid \cZ_n )
\ge \E[1/N_n]=\Theta(n^{-1/2}), 
\]
which concludes the proof of the lower bound. 

{\bf Upper bound.} 
For the upper bound, 
we will use the second bound in \cref{L_cycle}. 
However, this bound involves
a strict inequality. As such, we will perform a small trick. 
First, we apply \cref{L_cycle} and \eqref{E_E1/Nn} to obtain
\[
\P(\check{S}_1,\dots,\check{S}_{N_{n+3}-1} > 0 
\mid \cZ_{n+3} )
\le  \E[1/N_{n+3}]
=\Theta(n^{-1/2}). 
\]

To relate this to the probability of interest, 
we consider a special set of paths.  
Let $\cE_{n+3}$ be the event 
$(\check Y_1,\dots, \check Y_{2n+6})$ starts 
with 
\[
(\check Y_0,\check Y_1,\check Y_2)=(0,1,0)
\]
and ends with 
\[
(\check Y_{2n+2},
\ldots, 
\check Y_{2n+6})
=(0,0,-1,0,0).
\]
These conditions ensure that the 
first excursion has area $1$ and that the last excursion 
has area $-1$. See \cref{F_WKtrick}. 
Then, we see that
\begin{align*}
&
\P(\check{S}_1,\dots,\check{S}_{N_{n+3}-1} > 0 
\mid  \cZ_{n+3} )\\
&\quad \ge \P(\cE_{n+3},\: \check{S}_1,\dots,\check{S}_{N_{n+3}-1} > 0 
\mid \cZ_{n+3} ) \\
&\quad = \P(\cE_{n+3} \mid \cZ_{n+3})
\P(\check{S}_1,\dots,\check{S}_{N_{n+3}-1} > 0 
\mid \cE_{n+3},\:  \cZ_{n+3}). 
\end{align*}
If $\cE_{n+3}$
and 
$\cZ_{n+3}$ hold, 
then 
$\check{S}_1,\dots,\check{S}_{N_{n+3}-1} > 0$ holds 
if and only if 
the area process of $(\check{Y}_{i+2})_{0\le i\le 2n}$ 
remains non-negative. 
Moreover, the process 
$(\check{Y}_{i+2})_{0\le i\le 2n}$, on the events $\cE_{n+3}$
and 
$\cZ_{n+3}$, 
has the same law as $(\check{Y}_{i})_{0\le i\le 2n}$, on the event 
$\cZ_{n}$. 
Therefore, 
\[
\P(\check{S}_1,\dots,\check{S}_{N_{n+3}-1} > 0 
\mid \cE_{n+3},\:  \cZ_{n+3})
=\P(\cA_n 
\mid \cZ_{n}). 
\]

Finally, we see that 
\[
\P(\cE_{n+3}\mid \cZ_{n+3})
=
2^{-6}\P(\cZ_{n})/\P(\cZ_{n+3})
\] 
is bounded away from $0$ by \cref{L_ADL14}. 
This implies the upper bound. 
\end{proof}

\begin{figure}[h]
\centering
\includegraphics[scale=1]{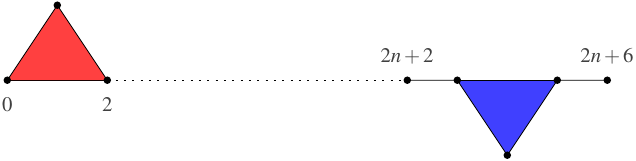}
\caption{The trick in the proof of \cref{T_order}
is to ask for a score sequence
of length $n+3$
of the form $(1,\ldots,n-2,n-2)$. 
}
\label{F_WKtrick}
\end{figure}

%%%%%%%%%%%%%%%%%%%%%%%%%%%%%%%%%%%%%%
%%%%%%%%%%%%%%%%%%%%%%%%%%%%%%%%%%%%%%
%%%%%%%%%%%%%%%%%%%%%%%%%%%%%%%%%%%%%%
%%%%%%%%%%%%%%%%%%%%%%%%%%%%%%%%%%%%%%
%%%%%%%%%%%%%%%%%%%%%%%%%%%%%%%%%%%%%%
\section{The key formula}
\label{S_KeyF}

The key to proving 
\cref{T_C} is the following
asymptotic equivalence. 

\begin{prop}
\label{P_KeyF}
As $n\to\infty$, 
\begin{equation}\label{E_KeyF}
\frac{n^{5/2}}{4^n}S_n
\sim 
\frac{3^{1/2}}{\pi}
\frac{n^{1/2}\E[N_n^{-1}\mid \cZ_n]}
{\E[M_n^{-1}\mid \cS_n]}.
\end{equation}
\end{prop}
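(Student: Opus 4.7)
The plan is to reduce \cref{P_KeyF} to the exact, non-asymptotic identity
\[
\P(\cZ_n)\,\E[N_n^{-1}\mid \cZ_n] \;=\; \P(\cS_n)\,\E[M_n^{-1}\mid \cS_n],
\]
since combining this with the local limit estimate $\P(\cZ_n)\sim \sqrt3/(\pi n^2)$ from \eqref{E_ZnOrder} and the formula $S_n=4^n\P(\cS_n)$ from \cref{P_SnBridge} immediately gives
\[
\frac{n^{5/2}}{4^n}\,S_n \;=\; n^{5/2}\,\P(\cZ_n)\cdot \frac{\P(\cS_n)}{\P(\cZ_n)} \;\sim\; \frac{3^{1/2}}{\pi}\cdot\frac{n^{1/2}\,\E[N_n^{-1}\mid \cZ_n]}{\E[M_n^{-1}\mid \cS_n]}.
\]

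To establish the identity, I would refine the cycle-lemma argument of \cref{S_Theta} via a weighted bijection. For $w\in\cZ_n$ and $j\in\{0,1,\dots,N_n(w)-1\}$, write $w^{(j)}$ for the cyclic shift of $w$ that promotes its $(j{+}1)$st excursion (between consecutive even zeros $\tau_j$ and $\tau_{j+1}$ of $\check Y$) to be the first. Such a shift preserves the total length $2n$, the boundary conditions $\check Y_{2n}=\check A_{2n}=0$, the value of $N_n$, and the sawtooth/alternating structure of $\check Y$, since each excursion has even length. Let $\cJ(w)\subseteq\{0,\dots,N_n(w)-1\}$ denote the argmin set of $(\check S_0,\dots,\check S_{N_n})$; then $j\in\cJ(w)$ holds exactly when $w^{(j)}\in\cA_n$. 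The map
\[
\Phi\colon (w,j)\longmapsto (w^{(j)},j)
\]
then gives a bijection from $\{(w,j):w\in\cZ_n,\ j\in\cJ(w)\}$ onto $\{(w',j):w'\in\cS_n,\ 0\le j<N_n(w')\}$, with inverse $(w',j)\mapsto (w'^{(-j)},j)$. Because $|\cJ|$ is cyclically invariant and coincides with $M_n$ on $\cS_n$, one has $|\cJ(w)|=M_n(w^{(j)})$ throughout the domain of $\Phi$.

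I would then sum the weight $1/(|\cJ(w)|\,N_n(w))$ across $\Phi$. On the domain side, for each fixed $w\in\cZ_n$ the inner sum over the $|\cJ(w)|$ choices of $j\in\cJ(w)$ collapses to $1/N_n(w)$; on the codomain side, for each fixed $w'\in\cS_n$ the inner sum over the $N_n(w')$ choices of $j\in\{0,\dots,N_n(w')-1\}$ collapses to $1/M_n(w')$. Equating the two sums and dividing by the total number $2^{2n}$ of walks of length $2n$ yields $\E[N_n^{-1}\,\1_{\cZ_n}]=\E[M_n^{-1}\,\1_{\cS_n}]$, which is the identity above. The main technical point to check carefully is that cyclic rearrangement of excursions is indeed a measure-preserving bijection on $\cZ_n$; once this cyclic invariance is in hand, the weighted-sum bookkeeping is routine, and no further asymptotics beyond \eqref{E_ZnOrder} are needed.
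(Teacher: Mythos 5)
Your proposal is correct, and it reaches the same exact identity as the paper, namely $\E[N_n^{-1}\mid\cZ_n]=\P(\cA_n\mid\cZ_n)\,\E[M_n^{-1}\mid\cS_n]$ (your unconditioned form $\E[N_n^{-1}\1_{\cZ_n}]=\E[M_n^{-1}\1_{\cS_n}]$ is equivalent, since $\cS_n\subset\cZ_n$); combining with \cref{P_SnBridge} and \eqref{E_ZnOrder} then gives \eqref{E_KeyF} exactly as you say. However, your route to that identity is genuinely different. The paper perturbs the excursion-area increments $\check X_k$ by tiny exchangeable noise $\eps_k$ summing to zero, so that the partial-sum process has a.s.\ unique minima; it then applies \cref{L_cycle} twice with equality (once to the perturbed $\check S^*$, yielding $\E[N_n^{-1}\mid\cZ_n]$, and once to the $\eps$-partial sums restricted to the zero set $\{\xi_1,\dots,\xi_{M_n}\}$ of $\check S$, yielding $\E[M_n^{-1}\mid\cS_n]$), and finally factors $\P(\check S^*\ge 0\mid\cZ_n)$ through the intermediate event $\{\check S\ge 0\}$. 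You instead construct an explicit weighted bijection $\Phi\colon(w,j)\mapsto(w^{(j)},j)$ between $\{(w,j):w\in\cZ_n,\ j\in\cJ(w)\}$ and $\{(w',j):w'\in\cS_n,\ 0\le j<N_n(w')\}$, using the same core observation underlying the cycle lemma (that $j\in\cJ(w)$ iff the excursion shift $w^{(j)}$ lies in $\cA_n$) but handling multiplicity of minima directly via the weight $1/(|\cJ|N_n)$, which is $\Phi$-invariant by cyclic invariance of $|\cJ|$ and $N_n$ and by $|\cJ|=M_n$ on $\cS_n$. Your approach dispenses with the perturbation trick entirely and makes the identity a finite combinatorial fact; the paper's approach keeps the argument within the probabilistic framework of \cref{L_cycle} at the cost of the auxiliary randomization and the factoring step. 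Both are valid; yours is arguably cleaner, though you should spell out (as you flag) that the excursion-level cyclic shift is a genuine bijection of lattice paths respecting the $\check Y$-parity structure, which holds because each excursion between consecutive even zeros has even length.
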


The rest of this section is devoted to the proof of 
this result. The numerator and 
denominator will be 
investigated 
separately in \cref{S_numer,S_denom} below. 
As we will see (\cref{P_numer,P_MntoM} below), 
the right hand side converges to a constant. 

\begin{proof}
In light of \cref{P_SnBridge} and \eqref{eq:prob_area_0}, 
we focus on calculating 
\[
\P(\cA_n\mid \cZ_n),
\]
which, as discussed above, is equivalent to 
\[
\P(\check S_1,\ldots,\check S_{N_n}\ge0
\mid \cZ_n).
\]
We will use
\cref{L_cycle} to calculate this probability. 
However, since the minimum of the process $\check S$
is not necessarily unique, we will first perturb its increments by small, random 
amounts. 
To be precise, let $\eps_1,\ldots,\eps_{N_n}$ be IID 
samples from $[-1/2n,1/2n]$ under the condition that $\sum_\ell \eps_\ell=0$. 
We put $\check X_k^* = \check X_k+\eps_k$. Then 
\[
\check S_k^* = \check S_k+\sum_{\ell=1}^k \eps_\ell. 
\]
For the modified process $\check S^*$, \cref{L_cycle} applies. Indeed, 
the increments of $\check S^*$ are invariant under cyclic shifts
and sum to $0$ and the minimum of $\check S^*$ is unique almost surely. 
As such, 
\[
\P(\check S_1^*,\ldots,\check S_{N_n}^*\ge0
\mid \cZ_n,\: N_n=N)
=1/N,
\]
and so 
\begin{equation}\label{E_Nn}
\P(\check S_1^*,\ldots,\check S_{N_n}^*\ge0
\mid \cZ_n)
=\E[N_n^{-1}\mid \cZ_n].
\end{equation}

Next, we note that the perturbations are sufficiently small 
so that 
$|\check S_k-\check S_k^*|<1$ for all $k$. 
In particular, $\check S_k^*\ge0$ implies 
$\check S_k\ge0$. Therefore, 
\begin{align}
&\P(\check S_1^*,\ldots,\check S_{N_n}^*\ge0
\mid \cZ_n)\nonumber\\
&
=\P(\check S_1^*,\ldots,\check S_{N_n}^*\ge0,\: 
\check S_1,\ldots,\check S_{N_n}\ge0 
\mid \cZ_n)\nonumber\\
& 
=\P(\check S_1^*,\ldots,\check S_{N_n}^*\ge0
\mid \cZ_n,\: \check S_1,\ldots,\check S_{N_n}\ge 0)
\P(\check S_1,\ldots,\check S_{N_n}\ge 0
\mid \cZ_n)
\label{eq:relation_perturbed_unperturbed}.
\end{align}
By \eqref{E_Nn}, it remains to find an expression 
for the first factor on the right hand side, as 
then we can calculate the probability of interest.

Recalling that $|\check S_k-\check S_k^*|<1$, we see that 
$\check S_k^*>0$ if $\check S_k>0$.
Therefore, on the event $\check S_1,\ldots,\check S_{N_n}\ge 0$, 
 the condition $\check S_k^*\ge 0$ can only be violated
if $\check S_k=0$ and, in this case, the condition is violated if and only if 
$\sum_{\ell=1}^k\eps_i<0$. Hence, recalling that  
\[
0=\xi_0<\xi_1<\dots<\xi_\nexa=N_n
\]
are the times $k$ 
when $\check S_k=0$, we find that 
\begin{align*}
&\P(\check S_1^*,\ldots,\check S_{N_n}^*\ge0
\mid \cZ_n,\: \check S_1,\ldots,\check S_{N_n}\ge 0)\\
&\quad =
\P\left(\sum_{\ell=1}^{\xi_k}\eps_\ell\ge0,\all 1\le k\le M_n
\bigmid \cZ_n,\: \check S_1,\ldots,\check S_{N_n}\ge 0\right).
\end{align*}

Finally, we note that, on the event  
$\cZ_n$ and $\check S_1,\ldots,\check S_{N_n}\ge 0$, 
\[
\left(\sum_{\ell=\xi_0+1}^{\xi_1}\eps_\ell,
\sum_{\ell=\xi_1+1}^{\xi_2}\eps_\ell,
\ldots,\sum_{\ell=\xi_{M_n-1}+1}^{\xi_{M_n}}\eps_\ell\right)
\]
is invariant in law under cyclic shifts, sums to $0$, and has
a unique minimum almost surely. Therefore, by \cref{L_cycle}, 
\[
\P(\check S_1^*,\ldots,\check S_{N_n}^*\ge0
\mid \cZ_n, \: \check S_1,\ldots,\check S_{N_n}\ge 0,\: M_n=M)
=1/M,
\]
and so 
\begin{align}\label{E_Mn}
&\P(\check S_1^*,\ldots,\check S_{N_n}^*\ge0
\mid \cZ_n,\: \check S_1,\ldots,\check S_{N_n}\ge 0)\nonumber\\
&\quad=\E[M_n^{-1}
\mid \cZ_n,\: \check S_1,\ldots,\check S_{N_n}\ge 0].
\end{align}
Combining 
\eqref{eq:prob_area_0} and \eqref{E_Nn}--\eqref{E_Mn}, 
together with \cref{P_SnBridge}, 
the result follows. 
\end{proof}

%%%%%%%%%%%%%%%%%%%%%%%%%%%%%%%%%%%%%%
%%%%%%%%%%%%%%%%%%%%%%%%%%%%%%%%%%%%%%
%%%%%%%%%%%%%%%%%%%%%%%%%%%%%%%%%%%%%%
%%%%%%%%%%%%%%%%%%%%%%%%%%%%%%%%%%%%%%
%%%%%%%%%%%%%%%%%%%%%%%%%%%%%%%%%%%%%%
\subsection{Graphical sequences}
\label{S_GraphicalSeq}

We recall that a sequence 
${\bf d}=(d_1\le \ldots\le d_n)\in\Z^n$ 
is {\it graphical} if it can be realized 
as the degree sequence of a graph. 
As discussed in Dahl and Flatberg \cite{DF05}, 
the classical conditions by 
Erd\H{o}s and Gallai \cite{EG60}
can be rephrased in the language of 
majorization. Specifically, such a ${\bf d}$ 
is graphical if and only if 
the sum of its entries is even and
${\bf d}'\in\Z^\kappa$ is {\it weakly majorized} 
by ${\bf d}^*\in\Z^\kappa$, 
where
${\bf d}'$ has entires $d_i'=d_i+1$, 
${\bf d}^*$ has entires
$d^*_i=\#\{j:d_j\ge i\}$, 
and $\kappa=\max\{i:d_i\ge i\}$. In other words, 
\[
\sum_{i=1}^k (d_i+1)\le \sum_{i=1}^k d_i^*,\quad\quad 1\le k\le \kappa.
\]
The difference between weak and usual majorization, is that there is no condition 
on the total sum, as in \eqref{E_maj2} above for score sequences.

The scheme used to obtain \eqref{E_KeyF} 
is inspired by the methods in \cite{BDGJS22}, 
which asymptotically enumerated graphical sequences 
by approximating the probability that a 
lazy simple random walk 
$(Y'_1,\dots,Y'_{2n})$ 
satisfies $\cA_n$ and $\cZ'_n=\{Y'_{2n}=0\}$. 
In this context, the role of \cref{L_cycle} is 
played instead by a result in Burns \cite{Bur07} for random processes,  
whose laws are invariant 
under permutations and sign flips.

%%%%%%%%%%%%%%%%%%%%%%%%%%%%%%%%%%%%%%
%%%%%%%%%%%%%%%%%%%%%%%%%%%%%%%%%%%%%%
%%%%%%%%%%%%%%%%%%%%%%%%%%%%%%%%%%%%%%
%%%%%%%%%%%%%%%%%%%%%%%%%%%%%%%%%%%%%%
%%%%%%%%%%%%%%%%%%%%%%%%%%%%%%%%%%%%%%
\section{The numerator}
\label{S_numer}

Recall that $\cA = 2^{3/2}\cB$  is the Airy area measure.
In this section, we prove the following result, which expresses
the numerator in \eqref{E_KeyF}
as an integral involving the Laplace transform
of $\cB^2$. 

As in \cref{S_Theta}, we let 
\[
0=\tau_0<\tau_1<\cdots<\tau_{N_n}=2n
\]
denote the even times $t$ 
for which $\check\rw_t=0$. 

\begin{prop}\label{P_numer}
As $n\to\infty$, 
\[
n^{1/2}\E[\nex^{-1}\mid \cZ_n]
\to 
\frac{1}{2\pi^{1/2}}
\int_0^\infty
\E[\exp(-6x^3\cB^2)]
\sqrt{1+1/x}\: dx.
\]
\end{prop}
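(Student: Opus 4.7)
The starting point is a symmetry reduction. Decompose $[0,2n]$ into the $N_n$ excursions of $\check Y$ between consecutive even zeros $\tau_{k-1}, \tau_k$, each of length $2\ell_k = \tau_k - \tau_{k-1}$ and signed area $X_k = \check A_{\tau_k} - \check A_{\tau_{k-1}}$. Any two simple random walks of length $2n$ obtained from one another by permuting these excursions have equal probability $2^{-2n}$ and identical $(Y_{2n}, \check A_{2n}) = (0,0)$, since $\cZ_n$ only constrains $\sum \ell_k$ and $\sum X_k$. Hence, conditional on $\cZ_n$ and on $N_n = k$, the sequence $((\ell_i, X_i))_{i=1}^k$ is exchangeable, so $\E[\tau_1 \mid \cZ_n, N_n] = 2n/N_n$ and
\[
\E[N_n^{-1} \mid \cZ_n] = \frac{1}{2n}\, \E[\tau_1 \mid \cZ_n].
\]
Combined with $\P(\cZ_n) \sim \sqrt 3/(\pi n^2)$ from \eqref{E_ZnOrder}, the target is equivalent to
\[
n^{3/2}\, \E\bigl[\tau_1\, \1_{\cZ_n}\bigr] \longrightarrow \frac{\sqrt 3}{\pi^{3/2}} \int_0^\infty \E[\exp(-6u^3\cB^2)]\sqrt{1+1/u}\,du.
\]

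\textbf{Decomposition and local limits.} Splitting at the first return and applying the Markov property,
\[
\P(\tau_1 = 2j,\, A_{\tau_1} = a,\, \cZ_n) = \P(\tau_1 = 2j,\, A_{\tau_1} = a)\cdot \P(Y_{2(n-j)} = 0,\, A_{2(n-j)} = -n - a).
\]
The second factor is handled by \cref{L_ADL14}, yielding $\frac{\sqrt 3}{\pi(n-j)^2}\exp\!\bigl(-3(n+a)^2/(4(n-j)^3)\bigr)$. For the first, classical computations give $\P(\tau_1 = 2j) \sim (2\sqrt\pi\, j^{3/2})^{-1}$, and a local central limit theorem for the area of a first-passage bridge, combined with the symmetric decomposition into positive and negative excursions, yields $\P(A_{\tau_1} = a \mid \tau_1 = 2j) \sim (2j)^{-3/2} f(a/(2j)^{3/2})$, where $f$ is the density of $\pm\cB$ (equiprobable signs).

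\textbf{Scaling.} Substitute $j = xn$ and $a = y(2j)^{3/2}$. Expanding shows that
\[
\frac{3(n + a)^2}{4(n - j)^3} \;=\; \frac{6\,x^3 y^2}{(1 - x)^3} + o(1),
\]
uniformly on compact subsets of $(0,1)\times\R$. Collecting prefactors (the sum over $a$ has spacing $da = (2j)^{3/2}\,dy$, and the sum over $j$ has spacing $dj = n\,dx$) and integrating out $y$ against $f$,
\[
n^{3/2}\, \E\bigl[\tau_1\, \1_{\cZ_n}\bigr] \longrightarrow \frac{\sqrt 3}{\pi^{3/2}} \int_0^1 \frac{1}{x^{1/2}(1 - x)^2}\, \E\!\left[\exp\!\left(-\frac{6\, x^3\, \cB^2}{(1 - x)^3}\right)\right] dx.
\]
The substitution $u = x/(1 - x)$ maps $(0,1)$ onto $(0,\infty)$, sends $x^3/(1-x)^3$ to $u^3$, and transforms $dx/(x^{1/2}(1-x)^2)$ to $\sqrt{1 + 1/u}\,du$, producing the claimed integral and, after division by $\P(\cZ_n)$, the factor $1/(2\pi^{1/2})$.

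\textbf{Main obstacle.} The principal technical point is justifying the sum-to-integral passage uniformly in $n$. Inside any compact subinterval $[\delta, 1 - \delta]$, convergence is immediate from \cref{L_ADL14} and the excursion local CLT. Near $x = 0$ the $x^{-1/2}$ singularity is integrable and can be controlled by $\P(\tau_1 = 2j, A_{\tau_1} = a) \le C j^{-3/2}(2j)^{-3/2}$ together with the uniform bound $\P(Y_{2m} = 0, A_{2m} = a) \le C/m^2$ from \cref{L_ADL14}. The delicate region is $x \uparrow 1$, where the divergent $(1-x)^{-2}$ prefactor must be offset by the damping of $\E[\exp(-c\cB^2)]$ as $c \to \infty$; this requires a small-ball estimate for the Brownian excursion area to ensure $\E[\exp(-c\cB^2)] = O(c^{-\alpha})$ for some $\alpha > 1/2$, which then dominates $(1-x)^{-2}$ after the change of variables. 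Once these uniform bounds are secured, dominated convergence closes the argument.
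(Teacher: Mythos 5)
Your high-level strategy is the same as the paper's: the exchangeability reduction $\E[N_n^{-1}\mid\cZ_n] = (2n)^{-1}\E[\tau_1\mid\cZ_n]$ (this is exactly \cref{L_tau1}), then the Markov decomposition at $\tau_1$, the local limit theorem of \cref{L_ADL14} for the bridge remainder, the asymptotic $\P(\tau_1=2t)\sim(2\sqrt\pi\,t^{3/2})^{-1}$, the Riemann-sum-to-integral passage, and the substitution $u = x/(1-x)$. Where you diverge is in the two places you flag as delicate, and in both the paper's route is cleaner and demands less.

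First, for the area conditional on $\tau_1 = 2t$, you invoke a \emph{local} CLT for the excursion area, $\P(A_{\tau_1}=a\mid\tau_1=2t)\sim(2t)^{-3/2}f(a/(2t)^{3/2})$. The paper needs only the weaker distributional convergence $t^{-3/2}|E_t|\overset{d}{\to}\cA$ (Tak\'acs \cite{Tak91}), because the quantity it must evaluate is $\E[\exp(-\tfrac34(E_t+n)^2/(n-t)^3)]$, an expectation of a bounded continuous functional, and it controls the discrepancy directly via a Skorokhod coupling and uniform Lipschitz bounds. A local CLT for excursion area would indeed imply this, but it is a strictly stronger and less readily citable input, and you would also need it with uniformity over $a$ to justify the sum-to-integral step; the paper avoids this overhead entirely.

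Second, and more substantively, your treatment of $x\uparrow 1$ is genuinely different and more fragile. You propose to beat the $(1-x)^{-2}$ divergence with a small-ball estimate $\E[\exp(-c\cB^2)]=O(c^{-\alpha})$, $\alpha>1/2$, plus dominated convergence. The bound itself is true (excursion-area small-ball probabilities decay super-polynomially), but dominated convergence requires a domination \emph{valid for finite $n$}, i.e.\ a uniform-in-$n$ bound on the pre-limit sum near $t=n$, which your local CLT for $A_{\tau_1}$ would have to supply uniformly — another uncited assumption. The paper sidesteps all of this: it truncates at $n' = n - n^{1/4}$ and observes that for $t\in(n',n]$ the contribution is \emph{exactly zero}, since on $\tau_1 = 2t$ one has $|A_{\tau_1}|\ge \tau_1-2\approx 2n$ deterministically, while the remaining walk of length $2(n-t)\le 2n^{1/4}$ forces $|A'_{2(n-t)}|\le\sqrt{n}$, so the event $A'_{2(n-t)}=-n-A_{\tau_1}$ is impossible. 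No estimate on $\cB$ is needed.

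Third, your small-$x$ bound $\P(\tau_1=2j,\,A_{\tau_1}=a)\le Cj^{-3/2}(2j)^{-3/2}$ is again a joint local bound that needs justification; the paper only uses the marginal $\P(\tau_1=2t)$ together with the uniform $O(n^{-2})$ bound on the bridge factor from \cref{L_ADL14} to show the contribution from $t\le n^\beta$ ($\beta<1$) is $O(n^{\beta/2-2})=o(n^{-3/2})$, which is entirely elementary. So: same architecture, but both of your boundary arguments lean on stronger, unjustified inputs that the paper's argument does not require.
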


The first step 
in our proof of this result 
is the following observation. 

\begin{lemma}\label{L_tau1}
We have that 
\[ 
\E[\nex^{-1} \mid \cZ_n]
=(2n)^{-1}
\E[\tau_1 \mid \cZ_n].
\]
\end{lemma}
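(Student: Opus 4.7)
The plan is to establish this identity via a cyclic-exchangeability argument, in the same spirit as \cref{L_cycle}. The key claim is that, under $\P(\cdot \mid \cZ_n)$, the excursions of $\check Y$ between consecutive even return times $\tau_0 < \tau_1 < \cdots < \tau_{N_n}$ are cyclically exchangeable, and in particular the excursion lengths $\ell_k := \tau_k - \tau_{k-1}$ are all identically distributed, conditional on $N_n$.

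First I would decompose the path into the blocks $e_k = (\check Y_{\tau_{k-1}+1}, \ldots, \check Y_{\tau_k})$ and verify that cyclic rotations of the block sequence $(e_1, \ldots, e_{N_n})$ yield valid paths in $\cZ_n$. The crucial point is that each $\tau_k$ is even, so rotating by an even number of steps preserves the parities on which the definition $\check Y_\ell = Y_\ell + \1_{\ell\text{ odd}}$ depends. Both conditions defining $\cZ_n$ are symmetric in the blocks: the terminal height $\check Y_{2n}$ is the sum of the signed height-increments of the excursions (each of which is $0$), and $\check A_{2n}$ is the sum of their area contributions. Hence any cyclic rotation of the blocks sends paths in $\cZ_n$ to paths in $\cZ_n$. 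Since simple random walk assigns equal mass to all $2n$-step paths, the conditional law given $\cZ_n$ is uniform on these paths, and therefore the block sequence is cyclically exchangeable given $\cZ_n$ and $N_n$.

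From this cyclic exchangeability, each $\ell_k$ has the same conditional expectation given $(\cZ_n, N_n)$. Since $\sum_{k=1}^{N_n} \ell_k = 2n$, this common value must be $2n/N_n$, so
\[
\E[\tau_1 \mid \cZ_n, N_n] = \E[\ell_1 \mid \cZ_n, N_n] = \frac{2n}{N_n}.
\]
Taking expectation over $N_n$ yields $\E[\tau_1 \mid \cZ_n] = 2n\,\E[N_n^{-1} \mid \cZ_n]$, which rearranges to the stated identity.

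The only genuinely delicate point is the first step: confirming that the block-level cyclic shift really produces a legitimate path of $\check Y$ with identical law under $\P(\cdot\mid\cZ_n)$. This is where the evenness of the $\tau_k$ (itself a consequence of the oscillatory nature of the sawtooth path, as noted in \cref{S_CviaRW}) is essential. Once this is established, the identity is an immediate consequence of exchangeability and the telescoping constraint $\sum_k \ell_k = 2n$.
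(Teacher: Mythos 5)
Your proof is correct and follows essentially the same route as the paper: the paper also derives the identity by observing that the interexcursion gaps $\tau_i-\tau_{i-1}$ are exchangeable conditional on $\cZ_n$ (hence share a common conditional expectation given $N_n$), and combining this with $\sum_i(\tau_i-\tau_{i-1})=2n$. You invoke only cyclic exchangeability rather than full exchangeability and spell out the parity/rotation justification in more detail, but the core mechanism is identical.
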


\begin{proof}
By linearity, 
\begin{equation}\label{E_tautele}
2n
\E[\nex^{-1} \mid \cZ_n]
=\E\left[\nex^{-1}\sum_{i=1}^\nex (\tau_i-\tau_{i-1}) 
\bigmid \cZ_n\right].
\end{equation}
Next, note that 
\[
\E[\tau_1 \mid \cZ_n]
=\E\left[
\E[\tau_1-\tau_0 \mid N_n,\: \cZ_n]
\mid \cZ_n\right].
\] 
On the event $\cZ_n$, the intervals 
$\tau_i-\tau_{i-1}$ between even times that $\sY$ hits $0$
are exchangeable. As such, the right hand side above is equal to 
\[
\E\left[
\E\left[
\nex^{-1}\sum_{i=1}^\nex (\tau_i-\tau_{i-1})
\bigmid N_n,\: \cZ_n\right]
\bigmid \cZ_n\right],
\]
which reduces to the right hand side in \eqref{E_tautele},
and thereby concludes the proof. 
\end{proof}

With this result at hand, we turn to the proof
of the main result of this section. 

\begin{proof}[Proof of \cref{P_numer}]
We will show that 
\begin{align*}
&n^{1/2}\E[\nex^{-1}\mid \cZ_n]\\
&\quad\to 
\frac{1}{2\pi^{1/2}}
\int_0^1  
\E\left[\exp\left(- \frac{3}{4} \frac{x^3}{(1-x)^3}\cA^2\right)\right]
x^{-1/2}(1-x)^{-2}dx,
\end{align*}
where
$\cA = 2^{3/2}\cB$ is the Airy area measure.
The result then follows by 
a simple change of variables.

By \eqref{E_ZnOrder}, 
\[
\E[\tau_1 \mid \cZ_n]
\sim 
\frac{\pi n^2}{\sqrt3}
\E[\tau_1 \1_{\{\rw_{2n}=0,\: \rwa_{2n}=-n\}}].
\]
Therefore, by \cref{L_tau1}, it remains to show that 
\begin{align*}
&n^{3/2}
\E[\tau_1 \1_{\{\rw_{2n}=0,\: \rwa_{2n}=-n\}}]\\
&\quad \to \frac{\sqrt3}{\pi^{3/2}}
\int_0^1  
\E\left[\exp\left(- \frac{3}{4} \frac{x^3}{(1-x)^3}\cA^2\right)\right]
x^{-1/2}(1-x)^{-2}dx. 
\end{align*}
We observe that
\begin{align*}
\E[\tau_1 \1_{\{\rw_{2n}=0,\: \rwa_{2n}=-n\}}]
&=\E[\tau_1 \P(\rw_{2n}=0,\: \rwa_{2n}=-n \mid \tau_1,\: \rwa_{\tau_1})]\\
&=\E[\tau_1 \P(\rw'_{2n-\tau_1}=0,\: \rwa'_{2n-\tau_1}=-n-\rwa_{\tau_1})],
\end{align*}
where $(\rw',\rwa')$ is an independent copy of $(\rw,\rwa)$. 
Directly, 
\begin{align}
&\E[\tau_1 \P(\rw'_{2n-\tau_1}=0,\: \rwa'_{2n-\tau_1}=-n-\rwa_{\tau_1})]\nonumber\\
& =2\sum_{t=1}^{n} \sum_{a=-t^2}^{t^2}
t\P(\tau_1=2t,\: \rwa_{\tau_1}=a)\label{E_3regions}
\P(\rw'_{2(n-t)}=0,\: \rwa'_{2(n-t)}=-n-a).
\end{align}
To estimate this sum, we fix $2/3<\beta<1$ and 
$n'=n-n^{1/4}$, 
and then evaluate over the following three regions: 
\begin{enumerate}
\item $t\in [1, n^\beta]$,  
\item $t\in (n^\beta, n']$,  
\item $t\in (n', n]$. 
\end{enumerate}
(For ease of notation, we write $n^\beta$ and $n'$,
rather than $\lfloor n^\beta\rfloor$ and $\lfloor n'\rfloor$.)
First, we will show that the first and third sums are $o(n^{-3/2})$. 
Then we will find the limit of the second sum, rescaled by $n^{3/2}$. 

{\bf First region.}
By \cref{L_ADL14}, there is a constant $c$ such that 
\begin{align*}
\P(\rw'_{2(n-t)}=0,\: \rwa'_{2(n-t)}=-n-a)
\le 
cn^{-2}
\end{align*}
for all $a$ and $t\le n^\beta$. 
Therefore, 
\begin{align*}
&\sum_{t=1}^{n^\beta} \sum_{a=-t^2}^{t^2}t\P(\tau_1=2t,\: \rwa_{\tau_1}=a)  
\P(\rw'_{2(n-t)}=0,\: \rwa'_{2(n-t)}=-n-a)\\
&\quad\le cn^{-2}\sum_{t=1}^{n^\beta}t\P(\tau_1=2t).
\end{align*}
Next, we recall
(see, e.g., Feller \cite[Sec.\ III.7]{Fel68}) that
\begin{equation}\label{E_tau1}
\P(\tau_1=2t)
=\frac{2}{(2t-1)4^t}\binom{2t-1}{t}
\sim \frac{1}{2\sqrt{\pi}t^{3/2}},
\end{equation}
and so 
\[
\sum_{t=1}^{n^\beta} t \P(\tau_1=2t)=O(n^{\beta/2}).
\]
As such, the contribution to \eqref{E_3regions}
from the first region is $O(n^{\beta/2-2})=o(n^{-3/2})$,
since $\beta<1$. 

{\bf Third region.} Next, we claim that, in fact, 
\[
\sum_{t=n'}^n \sum_{a=-t^2}^{t^2}t\P(\tau_1=2t,\: \rwa_{\tau_1}=a)  
\P(\rw'_{2(n-t)}=0,\: \rwa'_{2(n-t)}=-n-a)=0.
\]
Indeed, note that $n-t\le  n^{1/4}$ for $n'\le t\le n$. 
As such, $\rwa'_{2(n-t)}\in [-\sqrt {n}, \sqrt{n}]$ deterministically, and so 
the second factor is $0$ for $a\not\in [-n-\sqrt {n}, -n+\sqrt{n}]$. 
On the other hand, $|\rwa_{\tau_1}|\ge \tau_1-2$ deterministically, 
so the first factor is $0$
for all other $a\in [-n-\sqrt {n}, -n+\sqrt{n}]$, and the claim follows. 

{\bf Second region.} Finally, we turn our attention to the sum over the 
second region, 
which, as we will see, is on the order $n^{3/2}$.
Note that, by \eqref{E_tau1}, 
\begin{align*}
&\sum_{t=n^\beta}^{n'} \sum_{a=-t^2}^{t^2}t\P(\tau_1=2t,\: \rwa_{\tau_1}=a)  
\P(\rw'_{2(n-t)}=0,\: \rwa'_{2(n-t)}=-n-a)\\
&= 
\sum_{t=n^\beta}^{n'} 
\frac{2t\binom{2t-1}{t}}{(2t-1)4^t}
\sum_{a=-t^2}^{t^2}\P(\rwa_{\tau_1}=a\mid \tau_1=2t)  
\P(\rw'_{2(n-t)}=0,\: \rwa'_{2(n-t)}=-n-a).
\end{align*}

Let $\cV_m'=\{a:(0,a)\in\cV_m\}$ be the set of all possible values 
for the area process, if the random walk 
hits $0$ at time $t=2m$. 
Fix $\eps>0$.  Then, \cref{L_ADL14}, 
\[
\max_{a\in \cV_m'}\:
\left| 
m^2\P\left(\rw'_{2m}=0,\: \rwa'_{2m}=a\right) 
- \frac{\sqrt{3}}{\pi} \exp\left(-\frac{3a^2}{4m^3}\right)
\right|<\eps, 
\]
for all $m\ge M$, for some sufficiently large $M$. 
Then, for $n-n'=n^{1/4}\ge M$, 
\begin{align*}
&\left|\sum_{t=n^\beta}^{n'} 
\frac{2t\binom{2t-1}{t}}{(2t-1)4^t}
\sum_{a=-t^2}^{t^2}\P(\rwa_{\tau_1}=a\mid \tau_1=2t)  
\P(\rw'_{2(n-t)}=0,\: \rwa'_{2(n-t)}=-n-a)\right.\\
&\left. -\frac{\sqrt{3}}{\pi} 
\sum_{t=n^\beta}^{n'} \frac{2t\binom{2t-1}{t}}{(2t-1)(n-t)^24^t} 
\sum_{a\in \cV_t'}\P(\rwa_{\tau_1}=a \mid \tau_1=2t)
\exp\left[-\frac{3(a+n)^2}{4(n-t)^3}\right]\right|\\
&\quad\le \epsilon 
\sum_{t=n^\beta}^{n'} 
\frac{2t}{(2t-1)(n-t)^2 4^t}\binom{2t-1}{t}.
\end{align*}

Let $E_t$ be the area of a uniform Bernoulli excursion with length $2t$
(that has positive or negative sign, both with probability $1/2$).
Then 
\[
\sum_{a\in \cV_t'}\P(\rwa_{\tau_1}=a \mid \tau_1=2t)
\exp\left(-  \frac{3}{4}\frac{(a+n)^2}{(n-t)^3}\right)
=
\E\left[\exp\left(-\frac{3}{4}\frac{(E_t+n)^2}{(n-t)^3}\right)\right].
\]
Takács \cite[Theorem 3]{Tak91}
showed that 
$t^{-3/2}|E_t|\overset{d}{\to}\cA$. 
We claim that, as $n\to \infty$, 
\begin{equation}\label{E_EtvsA}
\sup_{n^\beta \le t\le n'}
\left|
\E\left[\exp\left(-\frac{3}{4}\frac{(E_t+n)^2}{(n-t)^3}\right)\right]
-\E\left[\exp\left(-\frac{3}{4}\frac{t^3\cA^2}{(n-t)^3}\right)\right]
\right|
\to 0.
\end{equation}
Since $\beta>2/3$, we have that 
$n^2=o(t^3)$.
As such, 
$(E_t+n)^2/t^3\overset{d}{\to}\cA^2$.  
We will work on a probability space 
where this convergence holds almost surely. 
Then, for any $\xi,\zeta>0$, 
\begin{align*}
&\sup_{n^\alpha \le t\le n'}
\left|
\E\left[\exp\left(-\frac{3}{4}\frac{t^3}{(n-t)^3}\frac{(E_t+n)^2}{t^3}\right)\right]
-\E\left[\exp\left(-\frac{3}{4}\frac{t^3\cA^2}{(n-t)^3}\right)\right]
\right|\\
&\le \sup_{n^\alpha \le t\le n'} 
\E\left[ \sup_{r>0}
\left|\exp\left(- r \frac{(E_t+n)^2}{t^3}\right) - \exp(- r \cA^2)\right| \right]\\
&\le \sup_{n^\alpha \le t\le  n'} 
\left[ 
\P\left(\frac{(E_t+n)^2}{t^3}\le \xi\right)
+\P\left(\left| \frac{(E_t+n)^2}{t^3}- \cA^2 \right|\ge  \zeta\right)
\right]\\
&\quad\quad 
+\P(\cA^2\le \xi)
+\sup_{r>0}\sup_{\substack{x,y> \xi\\ |x-y|<\zeta} }
|e^{- r x} - e^{- ry}|.
\end{align*}
Let $\eps>0$ be given. 
The functions $e^{-rx}$, for $r>0$, are uniformly Lipschitz, so 
the final term is less than $\eps/4$ for small enough $\zeta>0$. 
By the almost sure convergence and the fact that $\cA$ puts no mass on $0$, 
it follows that, for any $\xi,\zeta>0$, all three probabilities in the last expression are
less than $\eps/4$. Therefore the left hand side in 
\eqref{E_EtvsA} is bounded by $\eps$ for all large $n$, and the claim follows. 

Altogether, we find that 
\begin{align*}
&2\sum_{t=n^\beta}^{n'} \sum_{a=-t^2}^{t^2}t\P(\tau_1=2t,\: \rwa_{\tau_1}=a)  
\P(\rw'_{2(n-t)}=0,\: \rwa'_{2(n-t)}=-n-a)\\
&\quad \sim
\frac{\sqrt{3}}{\pi^{3/2}} 
\sum_{t=n^\beta}^{n'} 
\frac{1}{\sqrt{t}(n-t)^2}
\E\left[\exp\left(-\frac{3}{4}\frac{t^3}{(n-t)^3}\cA^2 \right)\right]\\
&\quad\sim
\frac{\sqrt{3}}{\pi^{3/2}}
\frac{1}{n^{3/2}}
\int_0^1  
\E\left[\exp\left(- \frac{3}{4} \frac{x^3}{(1-x)^3}\cA^2\right)\right]
x^{-1/2}(1-x)^{-2}dx.
\end{align*}
Since the contribution from the other regions is $o(n^{3/2})$, we find 
by \eqref{E_3regions} 
that 
\begin{align*}
&n^{3/2}
\E[\tau_1 \1_{\{\rw_{2n}=0,\: \rwa_{2n}=-n\}}]\\ 
&\quad 
\to \frac{\sqrt{3}}{\pi^{3/2}}
\int_0^1  
\E\left[\exp\left(- \frac{3}{4} \frac{x^3}{(1-x)^3}\cA^2\right)\right]
x^{-1/2}(1-x)^{-2}dx,
\end{align*}
as required. 
\end{proof}

%%%%%%%%%%%%%%%%%%%%%%%%%%%%%%%%%%%%%%
%%%%%%%%%%%%%%%%%%%%%%%%%%%%%%%%%%%%%%
%%%%%%%%%%%%%%%%%%%%%%%%%%%%%%%%%%%%%%
%%%%%%%%%%%%%%%%%%%%%%%%%%%%%%%%%%%%%%
%%%%%%%%%%%%%%%%%%%%%%%%%%%%%%%%%%%%%%
\section{The denominator}
\label{S_denom}

%%%%%%%%%%%%%%%%%%%%%%%%%%%%%%%%%%%%%%
%%%%%%%%%%%%%%%%%%%%%%%%%%%%%%%%%%%%%%
%%%%%%%%%%%%%%%%%%%%%%%%%%%%%%%%%%%%%%
%%%%%%%%%%%%%%%%%%%%%%%%%%%%%%%%%%%%%%
%%%%%%%%%%%%%%%%%%%%%%%%%%%%%%%%%%%%%%
\subsection{Near the boundary}

The denominator in \eqref{P_KeyF} involves $M_n$, the number 
of $k\le n$ 
such that $\cZ_k$ occurs. 
Our first lemma shows that, with high probability, 
all such times are close to the start or end of the walk. 

\begin{lemma}\label{lem:onlyhitnearboundary}
Suppose that $1\ll m(n)\ll n$.  Then  
\[ 
\P(\exists k\in(m,n-m):\cZ_k
\mid
\cS_n)
\to 0  
\]
as $n\to\infty$.
\end{lemma}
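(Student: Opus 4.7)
The plan is a straightforward union bound combined with the Markov decomposition at a hitting time of $\cZ$, followed by the matching order bounds from \cref{T_order}.

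First I would observe that, on the event $\cZ_k$, the event $\cA_n$ splits cleanly into two independent pieces. Indeed, $\cA_n$ requires $\check A_j \ge 0$ for all $1 \le j \le 2n$. Since $\cZ_k$ forces $\check A_{2k}=0$, this splits as $\{\check A_1,\ldots,\check A_{2k} \ge 0\}$ together with the condition that the area process of the shifted walk $(\check Y_{2k+i})_{i \ge 1}$ is nonnegative on $[1,2(n-k)]$. Because $2k$ is even, the shift preserves the sawtooth correction $\1_{\{t\text{ odd}\}}$ in \eqref{E_checkYA}, so $(\check Y_{2k+i})_{i\ge 0}$ has the same law as $(\check Y_i)_{i\ge 0}$. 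Using the Markov property of $Y$ at time $2k$, I can therefore factorize
\[
\P(\cS_n \cap \cZ_k) \le \P(\cA_k \cap \cZ_k)\,\P(\cA_{n-k} \cap \cZ_{n-k}) = \P(\cS_k)\,\P(\cS_{n-k}).
\]

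Next I would apply \cref{T_order}, which gives a constant $C_0$ with $\P(\cS_j) \le C_0 j^{-5/2}$ for all $j \ge 1$, and take a union bound:
\[
\P(\cS_n \cap \{\exists k\in(m,n-m): \cZ_k\}) \le \sum_{k=m+1}^{n-m-1}\P(\cS_k)\,\P(\cS_{n-k}) \le C_0^2 \sum_{k=m+1}^{n-m-1}\frac{1}{k^{5/2}(n-k)^{5/2}}.
\]
By symmetry it suffices to handle $m < k \le n/2$, on which $n-k \ge n/2$ and hence $(n-k)^{-5/2} = O(n^{-5/2})$. The resulting sum is controlled by
\[
O\!\left(n^{-5/2}\sum_{k > m} k^{-5/2}\right) = O(n^{-5/2} m^{-3/2}).
\]

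Finally, since \cref{T_order} also gives $\P(\cS_n) = \Theta(n^{-5/2})$, dividing yields
\[
\P(\exists k \in (m,n-m): \cZ_k \mid \cS_n) = O(m^{-3/2}),
\]
which tends to $0$ because $m=m(n)\to\infty$. The only delicate point is verifying the factorization, in particular that the even parity of $2k$ lines up the sawtooth path of the shifted walk with the original; once that is in hand, the argument is purely a union bound against the matching lower and upper bounds of order $n^{-5/2}$ from \cref{T_order}.
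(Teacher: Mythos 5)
Your proof is correct and takes essentially the same route as the paper: decompose at the hitting time $2k$ using the Markov property (and the parity alignment of the sawtooth), then union-bound using the $\Theta(n^{-5/2})$ estimate from \cref{T_order}. The only cosmetic difference is that the paper records the factorization $\P(\cS_n\cap\cZ_k)=\P(\cS_k)\P(\cS_{n-k})$ as an exact identity, while you state it as an upper bound, which suffices for the argument.
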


\begin{proof}
By \cref{T_order,P_SnBridge}, 
it follows that 
\[
p_n
=\P(\cS_n)
=\Theta(n^{-5/2}).
\] 
Therefore, for some $C>0$, 
\[
\P(\cZ_k
\mid 
\cS_n)
=\frac{p_kp_{n-k}}{p_n}\le C\left(\frac{n}{k(n-k)}\right)^{5/2},
\]
for all $k\in(0,n)$. 
Hence, 
\begin{align*}
&\P(\exists k\in(m,n-m):\cZ_k
\mid
\cS_n)\\
&\quad\le C\sum_{k=m}^{n-m} \left(\frac{n}{k(n-k)}\right)^{5/2} 
\le 2C\sum_{k=m}^{\lceil n/2\rceil } \left(\frac{n}{k(n-k)}\right)^{5/2}\\
&\quad\le 2^{7/2}C \sum_{k=m}^{\infty} k^{-5/2}
= O(m^{-3/2})=o(1),
\end{align*}
as claimed. 
\end{proof}

%%%%%%%%%%%%%%%%%%%%%%%%%%%%%%%%%%%%%%
%%%%%%%%%%%%%%%%%%%%%%%%%%%%%%%%%%%%%%
%%%%%%%%%%%%%%%%%%%%%%%%%%%%%%%%%%%%%%
%%%%%%%%%%%%%%%%%%%%%%%%%%%%%%%%%%%%%%
%%%%%%%%%%%%%%%%%%%%%%%%%%%%%%%%%%%%%%
\subsection{``Almost'' renewal times}

Next, we will prove the following technical lemma, 
which is the key to making the idea of 
``almost'' renewal times more precise. 

\begin{lemma}\label{lem:ratio_qk}
Let $q_n=\P(\cA_n\mid\cZ_n)$. 
Then $q_{n-m}/q_n\to 1$, uniformly in $m\le \log n$.
\end{lemma}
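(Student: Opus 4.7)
The plan is to first reduce the claim to a statement about the harmonic moments of $M_n$, and then to control those via a local limit argument.

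By \cref{P_SnBridge} and \eqref{E_ZnOrder},
$$
\frac{q_{n-m}}{q_n} = \frac{\P(\cS_{n-m})}{\P(\cS_n)}\cdot\frac{\P(\cZ_n)}{\P(\cZ_{n-m})},
$$
and the ratio of $\cZ$-probabilities tends to $1$ uniformly for $m\le \log n$. Combining \cref{P_KeyF} with \eqref{E_ZnOrder} and \cref{P_numer} yields
$$
\P(\cS_n)\sim \frac{K}{n^{5/2}\,\E[M_n^{-1}\mid\cS_n]}
$$
for an explicit positive constant $K$, so the task reduces to showing
$$
\frac{\E[M_n^{-1}\mid\cS_n]}{\E[M_{n-m}^{-1}\mid\cS_{n-m}]}\to 1
$$
uniformly for $m\le \log n$, the algebraic factor $(n/(n-m))^{5/2}$ plainly tending to $1$.

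To handle this ratio, I would pick a slow function $w(n)\to\infty$ with $w(n)=o(n)$, say $w(n)=\log^2 n$. By \cref{lem:onlyhitnearboundary}, with probability $1-o(1)$ under $\cS_n$, every $k\in(0,n)$ satisfying $\cZ_k$ lies in $(0,w(n)]\cup[n-w(n),n)$; writing $M_n^L, M_n^R$ for the corresponding counts, we have $M_n=M_n^L+M_n^R+1$ on this event. The main step is to show that the joint conditional distribution of $(M_n^L,M_n^R)$ under $\cS_n$ has a limit as $n\to\infty$, and that the same limit is obtained when $n$ is replaced by $n-m$ with $m\le \log n$. For the initial segment, \cref{L_ADL14} shows that conditioning on $\cS_n$ reweights the free walk on $[0,2w(n)]$ by a factor essentially proportional to the joint density $\phi$ from \cref{L_ADL14} evaluated at the joining state, and this factor is smooth in $n$, varying by $1+o(1)$ under a perturbation $m\le\log n$. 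The global $\cA_n$ constraint on the complementary interval contributes an additional reweighting that, by a Kolmogorov-bridge analysis like the one underlying \cref{P_numer}, is asymptotically $n$-independent on this window. A symmetric argument via time reversal (absorbing the sawtooth shift) handles the terminal segment. Since $M_n\ge 1$ makes $M_n^{-1}$ bounded by $1$, these distributional statements pass to expectations.

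The main obstacle is the LLT-based comparison just described: one must verify that the $\cA_n$ constraint imposed on the middle portion $[2w(n),2(n-w(n))]$ factors cleanly across the initial and terminal windows, with a multiplicative error tending to $1$ uniformly in $m\le\log n$. This parallels the coupling arguments one would use to identify the scaling limit in \cref{T_ScalingLimit}, and boils down to checking that the Kolmogorov-bridge conditioning is, to leading order, determined only by the joining states at times $2w(n)$ and $2(n-w(n))$.
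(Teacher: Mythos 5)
Your reduction is algebraically clean, and the factors $\P(\cZ_n)/\P(\cZ_{n-m})\to1$ and $(n/(n-m))^{5/2}\to1$ are fine, but the remaining step---showing $\E[M_n^{-1}\mid\cS_n]/\E[M_{n-m}^{-1}\mid\cS_{n-m}]\to1$---is circular. In the paper, the convergence of $\E[M_n^{-1}\mid\cS_n]$ (to $\rho$) is \cref{P_MntoM}, which is proved via \cref{lem:conv_rho}, which in turn \emph{uses} \cref{lem:ratio_qk}: the factor $q_{n-k}/q_n$ appears explicitly when decomposing $\P(\zeta_1\le m\mid\cS_n)$. Your sketch does not cite \cref{P_MntoM}, but the heuristic you offer in its place---that ``the global $\cA_n$ constraint on the complementary interval contributes an additional reweighting that\ldots\ is asymptotically $n$-independent on this window''---is precisely the assertion that $q_{n'}$ varies slowly in $n'$, i.e.\ the lemma you are trying to prove. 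So either you invoke \cref{P_MntoM} (circular) or you re-derive it, and to do the latter you again need to know the reweighting by the residual $\cA$-constraint is stable, which is \cref{lem:ratio_qk}.

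The paper sidesteps this by proving \cref{lem:ratio_qk} directly and at a lower level, without any reference to $M_n$. It fixes $b=\lfloor n^{9/20}\rfloor$ and compares the two conditional laws $\P_n(\cdot\mid\cZ_n)$ and $\P_{n-m}(\cdot\mid\cZ_{n-m})$ on the $\sigma$-algebra $\Sigma_{n-b}$ generated by the first and last $n-b$ increments, via an explicit Radon--Nikodym factor built from the LLT densities $\varphi_\ell$ (\cref{lem:changeconditioning}). It then shows this factor is $1+o(1)$ on a high-probability region (\cref{lem:no_bad_things,lem:controlR-N}), giving \cref{L_almost}: the truncated-area events $\cA_{n}^{n,b}$ and $\cA_{n-m}^{n,b}$ have asymptotically equal conditional probabilities. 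Finally it shows the sign of the area process is unlikely to change in the omitted middle window, so the truncated event probability matches $q_n$. Your proposal would need to be restructured along these lines to avoid presupposing the stability of the bulk $\cA$-constraint; as written, the ``main obstacle'' you flag at the end is not a technicality to be verified but the content of the lemma itself.
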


We note that, by \cref{T_order}, \cref{P_SnBridge} and \eqref{E_ZnOrder}, 
\begin{equation}\label{E_qnOrder}
q_{n}=\Theta(n^{-1/2}). 
\end{equation}

The proof of \cref{lem:ratio_qk} is given at the end of this section, 
after a number of preliminary results are proved, see 
\cref{lem:changeconditioning,lem:controlR-N,lem:no_bad_things,L_almost} 
below. 

The main idea is to compare the ``long'' process on $[2n]$ 
with a ``shorter'' process on $[2(n-m)]$. 
For $m\ll b\ll n$,  
we will show that the law of the first and last $n-b$ steps in the 
two processes are asymptotically indistinguishable. 
We will then show that, in both process, if the area process 
is non-negative during the first and last $n-b$ steps, 
then it is unlikely to go negative in between.  

We will start by studying the Radon--Nikodym derivative 
between the first and last $n-b$ steps in the two processes. 
Before stating our first lemma, we will set some notation. 

Let $Z$ be a function on the space of walks,   
of length $2n$ and starting at $0$, 
that only depends on 
the first and last $n-b$ increments.  
To be precise, for $n>b$ and $\rw=(\rw_1, \dots, \rw_{2n})$, 
we let $\rw^\leftarrow_k=\rw_{2n-k}-\rw_{2n}$ 
denote the time-reversed process of $\rw$. 
We assume that $Z$ is measurable with respect to 
\[
\Sigma_{n-b}
=\sigma(\rw_1,\dots,\rw_{n-b},\:
\rw^\leftarrow_1, \dots, \rw^\leftarrow_{n-b}).
\]
We note that 
\[
\Sigma_{n-b}
=\sigma(\Delta\rw_1,\dots,\Delta\rw_{n-b},\:
\Delta\rw_{n+b+1}, \dots, \Delta \rw_{2n}), 
\]
where $\Delta \rw_k=\rw_{k}-\rw_{k-1}$.  
Finally, we let  
\[
\rwa^\leftarrow_{n-b}=\sum_{i=1}^{n-b}\rw^\leftarrow_i
\] 
be the area under the last $n-b$ increments of $\rw$,
assuming that $Y_{2n}=0$. 
Note that $\rwa^\leftarrow_{n-b}$ is measurable 
with respect to $\Sigma_{n-b}$.

\begin{lemma}\label{lem:changeconditioning}
Let 
\begin{equation}\label{E_varphi}
\varphi_\ell(y,a)=\P(\check Y_{2\ell}=y,\: \check A_{2\ell}=a).
\end{equation}
Let $\E_n$ denote expectation 
with respect to the law of $\check Y$ on $[2n]$,  
Then, for any $m<b<n$, we have that 
\[
\E_n\left[Z\mid\cZ_n\right]
=\E_{n-m}\left[Z 
\frac{\varphi_b(\gamma,\alpha)}
{\varphi_{b-m}(\gamma,\alpha')}
\frac{\varphi_{n-m}(0,0)}
{\varphi_n(0,0)}
\bigmid \cZ_{n-m}
\right],
\]
where
\begin{align}
&\gamma=\sY^\leftarrow_{n-b}-\sY_{n-b}, \label{E_gamma}\\
&\alpha=-\sA_{n-b}-\sA^\leftarrow_{n-b}-2b\sY_{n-b},\label{E_alpha}\\
&\alpha'=-\sA_{n-b}-\sA^\leftarrow_{n-b}-2(b-m)\sY_{n-b}.\label{E_alpha'} 
\end{align}
\end{lemma}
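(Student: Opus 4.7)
My plan is to exploit the three-block decomposition of the walk into the first $n-b$ increments, the middle $2b$ increments, and the last $n-b$ increments, which are mutually independent. Since $Z$ is $\Sigma_{n-b}$-measurable and the outer $2(n-b)$ increments have the same joint law regardless of whether the walk has total length $2n$ or $2(n-m)$, the entire discrepancy between the two conditional expectations is absorbed into the middle block. Conditioning on $\Sigma_{n-b}$ pins the state $(\sY_{n-b},\sA_{n-b})$ coming from the first block and, together with the constraint $\cZ_n$, pins $(\sY_{n+b},\sA_{n+b})$ coming from the last block; the middle must then be a discrete bridge between these two states.

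The main computation is to identify the bridging probability. Writing $\check Y'_k=\sY_{n-b+k}-\sY_{n-b}$ for the centred middle process, the endpoint constraint unpacks to $\check Y'_{2b}=\sY^\leftarrow_{n-b}-\sY_{n-b}=\gamma$. For the area, starting from
\[
\sA_{2n}=\sA_{n-b}+\sum_{i=n-b+1}^{n+b}\sY_i+\sum_{i=n+b+1}^{2n}\sY_i=0
\]
and rewriting the rightmost sum in terms of $\sA^\leftarrow_{n-b}$ using $\sY^\leftarrow_k=\sY_{2n-k}$ on $\cZ_n$, while absorbing the translation $\sY_{n-b}$ of the middle into the $-2b\,\sY_{n-b}$ correction, one checks that the constraint reduces to $\check A'_{2b}=\alpha$. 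After verifying that the parity of $n-b$ is such that $\check Y'$ has the law of $\sY$ on $[0,2b]$, the probability that the middle bridges, conditional on $\Sigma_{n-b}$, is exactly $\varphi_b(\gamma,\alpha)$. Running the identical argument for the length-$2(n-m)$ walk, whose middle has length $2(b-m)$ and whose translation correction becomes $-2(b-m)\,\sY_{n-b}$, gives bridging probability $\varphi_{b-m}(\gamma,\alpha')$.

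Assembling the pieces, independence of the three blocks yields
\[
\E_n[Z\,\1_{\cZ_n}]=\E\!\left[Z\,\varphi_b(\gamma,\alpha)\right]
\quad\text{and}\quad
\E_{n-m}[Z\,\1_{\cZ_{n-m}}]=\E\!\left[Z\,\varphi_{b-m}(\gamma,\alpha')\right],
\]
where the expectations on the right are with respect to the common joint law of the first and last $n-b$ increments, which does not depend on the total length of the walk. Dividing these two identities by $\P(\cZ_n)=\varphi_n(0,0)$ and $\P(\cZ_{n-m})=\varphi_{n-m}(0,0)$ respectively, and eliminating the common expression $\E[Z\,\varphi_b(\gamma,\alpha)]$, produces precisely the stated change-of-conditioning identity.

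The real obstacle is the middle bridging computation: one must carry out the area bookkeeping with enough care that the translation by $\sY_{n-b}$ collapses precisely into the $-2b\,\sY_{n-b}$ term appearing in $\alpha$ (and $-2(b-m)\,\sY_{n-b}$ in $\alpha'$) without any residual pieces, and one must verify the parity alignment that allows the centred middle to be treated as a genuine copy of $\sY$ on an interval of the appropriate length. Once these points are settled, everything else is a routine rearrangement.
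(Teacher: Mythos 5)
Your proposal is correct and follows essentially the same route as the paper: decompose the walk into the two outer length-$(n-b)$ blocks and a middle bridge, compute the bridging probability $\varphi_b(\gamma,\alpha)$ (resp.\ $\varphi_{b-m}(\gamma,\alpha')$) conditional on $\Sigma_{n-b}$, and exploit that $\Sigma_{n-b}$ has the same law under $\E_n$ and $\E_{n-m}$. The paper packages this as a single master identity $\E_{n'}[Z'\mid\cZ_{n'}]=\E_{n'}\bigl[Z'\,\varphi_{n'-b'}(\cdot,\cdot)/\varphi_{n'}(0,0)\bigr]$ applied with a cleverly chosen $Z'$, while you derive the two bridging identities separately and eliminate the common term $\E[Z\,\varphi_b(\gamma,\alpha)]$; these are equivalent rearrangements of the same argument.
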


\begin{proof}
Suppose that $b'<n'$ and that 
$Z'$ only depends on $\Sigma_{b'}$. 
Then for $\E_{n'}$, the expectation with respect to the law of $\sY$ on $[2n']$, 
we have that 
\begin{align*}
\E_{n'}[Z'\mid \cZ_{n'}]
&=\frac{\E_{n'}[Z'\1(\cZ_{n'})] }{\P_{n'}(\cZ_{n'})}\\
&= \frac{\E_{n'}\left[\E_{n'}[Z'\1(\cZ_{n'})\mid \Sigma_{b'} ]\right]}{\P_{n'}(\cZ_{n'})}\\
&= \frac{\E_{n'}[Z' \P_{n'}(\cZ_{n'} \mid \Sigma_{b'})]}{\P_{n'}(\cZ_{n'})}.
\end{align*} 
    
We can rewrite the event  
$\cZ_{n'}$
in terms of random variables that 
are measurable with respect to $\Sigma_{b'}$ 
or independent of $\Sigma_{b'}$, 
by separating the increments in the first and last 
$b'$ steps from the remaining increments 
as follows: 
\[ 
\left\{\sY_{2n'-b'}-\sY_{b'}
=\sY^{\leftarrow}_{b'}-\sY_{b'},\: 
\sum_{i=1}^{2(n'-b')}(\sY_{b'+i}-\sY_{b'})
=-\sA_{b'}-\sA^{\leftarrow}_{b'}-2(n'-b')\sY_{b'} \right\}.
\]
Indeed, since $\sY_{2n'-b'}-\sY^{\leftarrow}_{b'}=\rw_{2n'}$, 
the first equality ensures that $\sY_{2n'}=0$, 
and the second equality clearly corresponds to 
$\sA_{2n'}=0$.  

Next, to rewrite 
$\P_{n'}(\cZ_{n'} \mid \Sigma_{b'})$, 
we note that  
$(\sY_{b'+i}-\sY_{b'})$  
and $(\sY_{i})$
have the same law 
conditional on $\Sigma_{b'}$. 
Hence 
\[
\E_{n'}\left[Z'
\frac{\varphi_{n'-b'}(\sY^{\leftarrow}_{b'}-\sY_{b'},
-\sA_{b'}-\sA^{\leftarrow}_{b'}-2(n'-b')\sY_{b'})}
{\varphi_{n'}(0,0)}\right]
=\E_{n'}[Z'\mid \cZ_{n'}].
\]
Finally, applying this equality with 
\[
Z'=Z 
\frac{\varphi_b(\gamma,\alpha)}{\varphi_{b-m}(\gamma,\alpha')}
\frac{\varphi_{n-m}(0,0)}{\varphi_n(0,0)}, 
\] 
$n'=n-m$ and $b'=n-b$,
we obtain the result. 
\end{proof}

In our application of \cref{lem:changeconditioning}, we will 
take $b=\lfloor n^{9/20}\rfloor$ and $m\le \log n$. 
The next two lemmas find a region that, with high probability, 
contains the values of $\gamma$, $\alpha$ and $\alpha'$ in 
\eqref{E_gamma}--\eqref{E_alpha'}, and in which 
the Radon--Nikodym derivative is close to $1$. 

We recall that $\omega(1)$
denotes a function that $\to\infty$
as $n\to\infty$. 

\begin{lemma}\label{lem:no_bad_things}
Let 
\[
\cR=\cR_\gamma\cap\cR_\alpha\cap\cR_{\alpha'}, 
\]
where 
\begin{align*}
&\cR_\gamma=\{|\gamma|\le n^{1/4}\}, \\
&\cR_\alpha=\{|\alpha|\le n^{29/40}\}, \\
&\cR_{\alpha'}=\{|\alpha-\alpha'|\le n^{3/5}\}. 
\end{align*}
Then, for $b=\lfloor n^{9/20}\rfloor$ and $m\le \log n$, 
we have that, as $n\to\infty$, 
\begin{align*}
&\P_{n}(\cR\mid \cZ_n)=1-n^{-\omega(1)},\\
&\P_{n-m}(\cR\mid \cZ_{n-m})=1-n^{-\omega(1)}.
\end{align*}
\end{lemma}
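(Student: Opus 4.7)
I would union-bound $\cR^c = \cR_\gamma^c \cup \cR_\alpha^c \cup \cR_{\alpha'}^c$ and show each subevent has conditional probability $n^{-\omega(1)}$. I describe the argument under $\cZ_n$; the one under $\cZ_{n-m}$ is identical, since $m \le \log n$ is negligible compared to every scale that appears.

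The first step is to simplify the three quantities using $\sY_{2n} = \sA_{2n} = 0$ on $\cZ_n$. Unwinding \eqref{E_gamma}--\eqref{E_alpha'}, and using the identity $\sA^\leftarrow_{n-b} = -\sA_{n+b} + \sY_{n+b}$ which holds on $\cZ_n$, I obtain
\[
\gamma = \sY_{n+b} - \sY_{n-b}, \qquad \alpha - \alpha' = -2m\sY_{n-b}, \qquad \alpha = \sA_{n+b} - \sA_{n-b} - 2b\sY_{n-b} - \sY_{n+b}.
\]
Thus $\gamma$ is the increment of the walk across the middle block of length $2b$, $\alpha - \alpha'$ is a fixed multiple of $\sY_{n-b}$, and $\alpha$ is, up to a lower-order correction, the area of the middle block measured from the constant level $\sY_{n-b}$.

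The second step is to decompose $[0, 2n]$ into three blocks $I_1 = [0, n-b]$, $I_2 = [n-b, n+b]$, $I_3 = [n+b, 2n]$, on whose increments the walk is independent. Conditioning on the quadruple $(\sY_{n\pm b}, \sA_{n\pm b})$ splits $\cZ_n$ into three independent events, one per block, so applying \cref{L_ADL14} block by block yields a joint local limit theorem for the quadruple under $\cZ_n$. From this one reads off the natural scales: $\sY_{n\pm b}$ has scale $\sqrt{n}$, $\sA_{n\pm b}$ has scale $n^{3/2}$, the increment $\sY_{n+b}-\sY_{n-b}$ has scale $\sqrt{b} = n^{9/40}$, and the detrended middle area $\sA_{n+b} - \sA_{n-b} - 2b\sY_{n-b}$ has scale $b^{3/2} = n^{27/40}$. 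Comparing the stated thresholds to these scales, the Gaussian tails produce $\P_n(\cR_\gamma^c \mid \cZ_n) \le \exp(-cn^{1/20})$, $\P_n(\cR_\alpha^c \mid \cZ_n) \le \exp(-cn^{1/10})$, and $\P_n(\cR_{\alpha'}^c \mid \cZ_n) \le \exp(-cn^{1/5}/\log^2 n)$, each of which is $n^{-\omega(1)}$.

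The main obstacle is upgrading \cref{L_ADL14}, which is stated only as an $o(1)$ asymptotic in the density, to a uniform pointwise Gaussian \emph{upper} bound valid throughout the relevant tail regions. In a moderate regime (say $|\sY_{n\pm b}| \lesssim \sqrt{n\log n}$ and $|\sA_{n\pm b}| \lesssim n^{3/2}\sqrt{\log n}$) the LLT delivers this directly, while in the far tail one must supply a separate concentration inequality — Azuma--Hoeffding for $\sY$, and a Bernstein-type bound applied to an Abel-summed representation of the area — to guarantee super-polynomial decay. Individually these ingredients are standard; gluing them together cleanly across the three blocks is the most technical point.
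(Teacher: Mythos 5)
Your algebra is correct: on $\cZ_n$ (so $\sY_{2n}=\sA_{2n}=0$) one does get $\gamma=\sY_{n+b}-\sY_{n-b}$, $\alpha-\alpha'=-2m\sY_{n-b}$, and $\alpha$ equal to the detrended middle area up to lower order, and the scale calculation ($\sqrt b$, $b^{3/2}$, $m\sqrt n$ against the thresholds $n^{1/4}$, $n^{29/40}$, $n^{3/5}$) is also right. However, the ``main obstacle'' you flag in the last paragraph is a genuine gap, not a technicality. \cref{L_ADL14} gives an additive $o(n^{-2})$ error on the point probability, which says nothing about the conditional law of $(\sY_{n\pm b},\sA_{n\pm b})$ far in its tail; turning it into a uniform Gaussian \emph{upper} bound valid at distance $n^{\Omega(1)}$ standard deviations, in the conditioned bridge, would need a separate argument that you only sketch. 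As written, the proposal does not close.

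The paper's route avoids the issue entirely, and the key move is the one you are circling but not quite using. On $\cZ_n$, since $\sA_{n-b}+\sA^\leftarrow_{n-b}+\sum_{i=1}^{2b}\sY_{n-b+i}=0$, one gets the clean identity $\alpha=\sum_{i=1}^{2b}(\sY_{n-b+i}-\sY_{n-b})$: $\alpha$ is a sum of $2b$ increments of the middle block anchored at $\sY_{n-b}$. Since $2b\cdot n^{1/4}=O(n^{7/10})\ll n^{29/40}$, the single event $\{\max_{0\le i\le 2b}|\sY_{n-b+i}-\sY_{n-b}|\le n^{1/4}\}$ implies both $\cR_\gamma$ and (on $\cZ_n$) $\cR_\alpha$, and its complement has \emph{unconditional} probability $\exp[-\Omega(n^{1/2}/b)]=\exp[-\Omega(n^{1/20})]$ by reflection and Hoeffding; similarly $\cR_{\alpha'}^c$ requires $|\sY_{n-b}|>n^{3/5}/(2m)$, which Hoeffding also makes $n^{-\omega(1)}$. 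This yields $\P(\cR^c\cap\cZ_n)=n^{-\omega(1)}$ with no conditioning, and since $\P(\cZ_n)=\Theta(n^{-2})$ is only polynomially small, dividing through gives $\P(\cR^c\mid\cZ_n)=n^{-\omega(1)}$. In short: you do not need tail control of the conditioned bridge at all; a super-polynomial unconditional bound beats the polynomial cost of the conditioning. If you want to salvage your three-block LLT decomposition, the missing ingredient is precisely this last observation, which lets you replace the joint Gaussian tail bound under $\cZ_n$ by a plain maximal inequality for the unconditioned walk.
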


\begin{proof}
We will only show the first statement, 
as the second statement follows similarly. 
Note that $|\gamma|=|\sY_{n-b}-\sY_{n-b}|$. 
Also, observe that, on the event 
$\cZ_n$, 
\[
\sA_n=\sA_{n-b}+\sA^\leftarrow_{n-b}
+\sum_{i=1}^{2b} \sY_{n-b+i}=0. 
\]
Hence 
\[
\alpha=-\sA_{n-b}-\sA^\leftarrow_{n-b}-2b\sY_{n-b}
=\sum_{i=1}^{2b}(\sY_{n-b+i}-\sY_{n-b}).  
\]
Therefore, 
since 
\[
2bn^{1/4}= O(n^{7/10})\ll n^{29/40},
\] 
for all large $n$, 
\[
\cR_\gamma^c\cup(\cR_\alpha^c\cap \cZ_n)
\subset \left\{\max_{0\le i \le 2b} |\sY_{n-b+i}-\sY_{n-b}|>n^{1/4} \right\}. 
\]
By the reflection principle, and 
Hoeffding's inequality, 
this event has probability at most  
$\exp[-\Omega(n^{1/2}/b)]$. 
Therefore, since $b=O(n^{9/20})$, 
it follows that 
\[
\P(\cR_\gamma^c\cup(\cR_\alpha^c\cap \cZ_n))
\le 
\exp[-\Omega(n^{1/20})].
\]

Next, note that $|\alpha-\alpha'|=2m|\sY_{n-b}|$. 
As such, Hoeffding's inequality implies  
\[
\P(\cR_{\alpha'}^c)\le \exp[-\Omega(n^{1/5}/\log^2n)].
\]

Altogether, we find that $\P(\cR^c\cap \cZ_n)=n^{-\omega(1)}$. 
Since $\P(\cZ_n)=\Theta(n^{-2})$ by 
\eqref{E_ZnOrder}, it follows that 
$\P(\cR^c\mid \cZ_n)=n^{-\omega(1)}$, as claimed. 
\end{proof}

Next, we show that, on the event $\cR$, 
the Radon--Nikodym derivative is close to $1$. 

\begin{lemma}\label{lem:controlR-N}
Suppose that 
$b=\lfloor n^{9/20}\rfloor$
and $m\le \log n$. 
Then
\[
\frac{\varphi_b(y,a)}{\varphi_{b-m}(y,a')}
\frac{\varphi_{n-m}(0,0)}{\varphi_n(0,0)}
\to 1, 
\]
as $n\to \infty$, 
uniformly over 
$|y|\le n^{1/4}$, $|a|\le n^{29/40}$ and $|a-a'|\le n^{3/5}$, 
for which $\varphi_b(y,a)>0$ and $\varphi_{b-m}(0,0)>0$. 
\end{lemma}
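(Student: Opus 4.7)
The plan is to apply Lemma 4.1 (the local limit theorem) to each of the four terms in the ratio, then combine the resulting prefactors and exponentials. Since $\check Y_{2\ell} = Y_{2\ell}$ and $\check A_{2\ell} = A_{2\ell} + \ell$, we have $\varphi_\ell(y,a) = \P(Y_{2\ell} = y, A_{2\ell} = a - \ell)$, and a multiplicative form of Lemma 4.1 (see below) gives
\[
\varphi_\ell(y,a) \sim \frac{\sqrt{3}}{\pi\ell^2} \exp\bigl(E(\ell, y, a)\bigr), \qquad E(\ell, y, a) := -\frac{y^2}{\ell} + \frac{3(a-\ell)y}{2\ell^2} - \frac{3(a-\ell)^2}{4\ell^3}.
\]
Substituted into the ratio, the prefactors combine to $((b-m)/b)^2 (n/(n-m))^2 \to 1$, since $m/b \to 0$ (as $m \le \log n$ while $b = n^{9/20}$) and $m/n \to 0$. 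It thus suffices to show that
\[
\Delta := E(b, y, a) - E(b-m, y, a') + E(n-m, 0, 0) - E(n, 0, 0)
\]
tends to $0$ uniformly over the region $|y| \le n^{1/4}$, $|a| \le n^{29/40}$, $|a-a'| \le n^{3/5}$.

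Since $E(\ell, 0, 0) = -3/(4\ell)$, the last pair contributes $3/(4n) - 3/(4(n-m)) = O(m/n^2) = o(1)$. For the first pair, I would split as $[E(b,y,a) - E(b-m,y,a)] + [E(b-m,y,a) - E(b-m,y,a')]$. The first bracket measures sensitivity to the change in $\ell$: expanding the difference term by term produces six contributions, each bounded by an explicit monomial in $|y|, |a|, m, b$ (for instance $y^2 m/(b(b-m))$, $|a||y|m/b^3$, $a^2 m/b^4$, etc.). Substituting the given size constraints together with $b=n^{9/20}$ and $m \le \log n$, each term is routinely verified to be $o(1)$ (e.g.\ $y^2 m/b^2 \le n^{1/2}\log n/n^{9/10} \to 0$, and $a^2 m/b^4 \le n^{29/20}\log n/n^{9/5} \to 0$). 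The second bracket involves only the $a$-dependent terms of $E(b-m, \cdot, \cdot)$, yielding three differences controlled by $|y||a-a'|/b^2$, $|a-a'|(|a|+|a'|)/b^3$ and $|a-a'|/b^2$; using $|a-a'|\le n^{3/5}$ and $|a|,|a'|=O(n^{29/40})$ these are $o(1)$ as well (the dominant one is $|y||a-a'|/b^2 \le n^{1/4 + 3/5 - 9/10} = n^{-1/20}$).

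The main obstacle is that Lemma 4.1 as stated provides only an additive approximation, $\varphi_\ell(y,a) = (\sqrt{3}/\pi\ell^2) \exp(E(\ell,y,a)) + o(1/\ell^2)$. In our regime the exponent $E(b,y,a)$ may have magnitude as large as $O(n^{1/10})$, so the true value $\varphi_b(y,a)$ can be many orders of magnitude smaller than the additive error bound, and uniform multiplicative closeness does not follow from Lemma 4.1 directly. To close this gap, I would invoke a moderate-deviations strengthening of the local CLT — obtainable via a standard Edgeworth/tilting refinement of the characteristic-function proof underlying Lemma 4.1 — valid throughout the regime where $|y|$ and $|a|$ remain polynomially below $\sqrt{\ell}$ and $\ell^{3/2}$ respectively, which comfortably covers our range. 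Once the approximation is multiplicative, the uniform bound $\Delta \to 0$ established above yields the desired convergence of the ratio to $1$.
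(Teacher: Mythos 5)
Your proof follows essentially the same route as the paper: apply the local limit theorem \cref{L_ADL14} to each of the four factors, combine the polynomial prefactors (which manifestly converge to $1$), and verify that the exponent differences vanish uniformly over the stated range. The algebra you set up and the term-by-term size bounds are exactly what the paper does.

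The obstacle you identify is genuine and, in fact, applies equally to the paper's own proof. \cref{L_ADL14} provides only a \emph{uniform additive} error for $\ell^2\varphi_\ell(y,a)$, whereas the paper simply asserts the multiplicative form $\varphi_\ell(y,a) = (c_1/\ell^2)\exp[\cdots + \eps_\ell]$ with $\eps_\ell\to 0$, without flagging the gap between the two. Over the stated range one has $y^2/b \le n^{1/20}$ and $(a-b)^2/b^3 \le n^{1/10}$ (roughly), so $\phi$ can be of order $\exp(-\Theta(n^{1/10}))$, at which point a $o(1/\ell^2)$ additive error says nothing about the ratio. So your observation that a moderate-deviations strengthening is required to close the argument is correct and is a useful clarification.

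One substantive error in your proposed fix: you claim the needed LLT is one valid for "$|y|$ polynomially below $\sqrt{\ell}$ and $|a|$ below $\ell^{3/2}$," and that this "comfortably covers our range." It does not. With $b=\lfloor n^{9/20}\rfloor$, we have $\sqrt{b} = n^{9/40}$, which is \emph{smaller} than the allowed $|y| \le n^{1/4} = n^{10/40}$; likewise $b^{3/2}=n^{27/40}$ is smaller than the allowed $|a|\le n^{29/40}$. Both $y$ and $a-\ell$ can exceed their diffusive scales by small polynomial factors. The strengthened LLT must therefore be valid across a genuinely moderate-deviations regime (roughly $|y|=O(\ell^{5/9})$ and $|a-\ell|=O(\ell^{29/18})$), not merely the central regime you describe. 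This is still achievable for bounded increments (the cumulant generating function of $(\pm1$-steps, area$)$ is entire), but the statement as you have it mischaracterizes the range and should be corrected before the argument is complete.
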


\begin{proof}
By \cref{L_ADL14}, it follows that, 
for some constants $c_1,\dots,c_4$ 
and for some $\eps_\ell$, 
with $\eps_\ell\to 0$ as $\ell\to\infty$,
we have that 
\[ 
\varphi_\ell(y,a)
=\frac{c_1}{\ell^2}
\exp\left[
c_2\frac{y^2}{\ell}
+c_3\frac{ay}{\ell^2}
+c_4\frac{a^2}{\ell^3}
+\eps_\ell
\right],
\]
for $a$, $\ell$ and $y$ such that 
$\varphi_\ell(y,a)>0$.

In fact, the parity conditions for
$\varphi_b(y,a)>0$ and $\varphi_b(y,a')>0$ 
(and similarly for $\varphi_{n-m}(0,0)$ and $\varphi_n(0,0)$)
are equivalent, so we can ignore these conditions from now on
(since $\E_n$ does not put any mass on values 
which do not satisfy these conditions). 
Note that 
the parity condition for $\varphi_b(y,a)>0$ 
requires that $y\equiv 0$  and $a \equiv 0$ mod 2.
Similarly, for $\varphi_b(y,a)>0$, 
we require that $y\equiv 0$  and $a' \equiv 0$ mod 2. 
However, since $a-a'\equiv 0$ mod 2, 
the conditions are indeed equivalent.     

Next, suppose that  
$a_0/a\to0$ and 
$\ell_0/\ell\to0$. 
Then, after some straightforward algebraic manipulations, 
it can be seen that 
\begin{align*} 
&\frac{\varphi_\ell(y,a)}{\varphi_{\ell-\ell_0}(y,a-a_0)}\\
&\quad =
(1-\ell_0/\ell)^2
\exp\left[
O\left(\frac{y^2\ell_0+ya_0}{\ell^2}
+\frac{ya\ell_0+aa_0}{\ell^3}
+\frac{a^2\ell_0}{\ell^4}
\right)
+o(1)
\right].
\end{align*}
Finally, we observe that, if $\ell=\Omega(n^{9/20})$ 
and $\ell_0\le \log n$, 
then 
this ratio converges to $1$, uniformly in all 
$|y|\le n^{1/4}$, 
$|a| \le n^{29/40}$ and 
$|a_0| \le n^{3/5}$. 
\end{proof}

By \cref{lem:changeconditioning,lem:controlR-N,lem:no_bad_things}
it follows that, for $k\le\log n$ and $b=\lfloor n^{9/20}\rfloor$, 
that the law of the first and last $n-b$ steps of the ``long'' process on $[2n]$ 
and the ``shorter'' process on $[2(n-b)]$ are asymptotically indistinguishable. 
This implies the following result, which is very close to \cref{lem:ratio_qk}. 
In fact, the only difference is that, in the following, 
we do not consider the sign of the area process 
between times $n-b$ and $n+b$ in the ``long'' process 
and between times $n-b$ and $n-2m+b$ in ``shorter'' process. 

Let 
\[
\cA_{n'}^{n,b}
=\{\check\rwa_1,\dots,\check\rwa_{n-b}\ge 0,\:
\check\rwa_{2n'-n+b},\dots,\check\rwa_{2n'}\ge 0
\}. 
\]
  
\begin{lemma}\label{L_almost}
Suppose that $b=\lfloor n^{9/20}\rfloor$. 
Then 
\[
\P_n(\cA_{n}^{n,b} \mid\cZ_n) 
=(1+o(1))\P_{n-m}(\cA_{n-m}^{n,b} \mid\cZ_{n-m}),
\]
where $o(1)\to0$, uniformly in 
$m\le \log n$.
\end{lemma}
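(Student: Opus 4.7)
The plan is to apply the change-of-measure formula from \cref{lem:changeconditioning} with the indicator $Z = \1_{\cA_n^{n,b}}$, and then control the resulting Radon--Nikodym derivative using \cref{lem:controlR-N} on the good event $\cR$ of \cref{lem:no_bad_things}, discarding the complement via a reverse application of \cref{lem:changeconditioning}.

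The first step is a measurability check: I would verify that on $\cZ_n$ the event $\cA_n^{n,b}$ is $\Sigma_{n-b}$-measurable, and that on $\cZ_{n-m}$ the \emph{same} deterministic functional of the first $n-b$ forward and the first $n-b$ backward increments equals $\1_{\cA_{n-m}^{n,b}}$. The forward condition $\sA_1,\ldots,\sA_{n-b}\ge 0$ manifestly depends only on $\Delta\rw_1,\dots,\Delta\rw_{n-b}$. For the backward condition, on $\cZ_{n'}$ we have $\rw_{2n'}=0$, so for $0\le k\le n-b$ the value $\sA_{2n'-k}$ is determined by the last $n-b$ backward increments (the offset between $\sY$ and $\rw$ depends only on $k\bmod 2$, since $2n'$ is even in both cases). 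Hence the same functional $Z$ can be used under both $\E_n[\,\cdot\mid\cZ_n]$ and $\E_{n-m}[\,\cdot\mid\cZ_{n-m}]$.

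Setting $D = \frac{\varphi_b(\gamma,\alpha)}{\varphi_{b-m}(\gamma,\alpha')}\frac{\varphi_{n-m}(0,0)}{\varphi_n(0,0)}$, \cref{lem:changeconditioning} then gives $\P_n(\cA_n^{n,b}\mid\cZ_n) = \E_{n-m}[ZD\mid\cZ_{n-m}]$. I would split this according to the $\Sigma_{n-b}$-measurable event $\cR$. On $\cR$, \cref{lem:controlR-N} yields $D=1+o(1)$ uniformly in $m\le\log n$, so
\[
\E_{n-m}[ZD\1_\cR\mid\cZ_{n-m}] = (1+o(1))\,\E_{n-m}[Z\1_\cR\mid\cZ_{n-m}].
\]
For the $\cR^c$-contribution, I apply \cref{lem:changeconditioning} \emph{in reverse} to the $\Sigma_{n-b}$-measurable indicator $\1_{\cR^c}$: this identifies $\E_{n-m}[\1_{\cR^c}D\mid\cZ_{n-m}] = \P_n(\cR^c\mid\cZ_n) = n^{-\omega(1)}$ by \cref{lem:no_bad_things}, so the contribution is bounded by this because $Z\le 1$. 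Similarly, since $\P_{n-m}(\cR^c\mid\cZ_{n-m})=n^{-\omega(1)}$, the event $\cR$ can be removed from the right-hand expectation at negligible cost. Since $\cA_{n-m}^{n,b}\supseteq\cA_{n-m}$, the estimate \eqref{E_qnOrder} gives $\P_{n-m}(\cA_{n-m}^{n,b}\mid\cZ_{n-m})\ge q_{n-m}=\Omega(n^{-1/2})$, which dominates the $n^{-\omega(1)}$ error, and dividing through yields the claimed asymptotic, uniformly in $m\le\log n$.

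The main obstacle is the measurability bookkeeping: one has to carefully describe $\cA_n^{n,b}$ on $\cZ_n$ and $\cA_{n-m}^{n,b}$ on $\cZ_{n-m}$ as the same functional of the first and last $n-b$ increments, paying attention to time-reversal and the parity shift in $\sY=\rw+\1_{\text{odd}}$. After that, the key observation that $\cR^c$ itself lies in $\Sigma_{n-b}$ makes everything fall into place, because it allows \cref{lem:changeconditioning} to be used in reverse to convert the awkward $\E_{n-m}[D\1_{\cR^c}\mid\cZ_{n-m}]$ into the manageable $\P_n(\cR^c\mid\cZ_n)$, sidestepping any need for uniform pointwise bounds on $D$ off $\cR$.
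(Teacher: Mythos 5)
Your proposal is correct and follows essentially the same route as the paper: apply \cref{lem:changeconditioning} to a $\Sigma_{n-b}$-measurable $Z$ that agrees with $\1_{\cA_{n'}^{n,b}}$ on $\cZ_{n'}$, restrict to the good event $\cR$ where \cref{lem:controlR-N} makes the Radon--Nikodym factor $1+o(1)$, discard $\cR^c$ via \cref{lem:no_bad_things}, and absorb the $n^{-\omega(1)}$ errors using $\P_{n'}(\cA_{n'}^{n,b}\mid\cZ_{n'})\ge q_{n'}=\Omega(n^{-1/2})$. The only cosmetic difference is that the paper puts $\cR$ directly into $Z$ and sandwiches the resulting expectation between $\underline{\delta}_n$ and $\overline{\delta}_n$, whereas you split off $\cR^c$ and transfer its contribution back via a second (reverse) application of \cref{lem:changeconditioning} to $\1_{\cR^c}$ — both are sound and amount to the same estimate.
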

    
\begin{proof}
Let $\underline{\delta}_n$ and 
$\overline{\delta}_n$ denote the minimum/maximum values of 
\[
\frac{\varphi_b(y,a)}{\varphi_{b-m}(y,a')}
\frac{\varphi_{n-m}(0,0)}{\varphi_n(0,0)},
\]
taken over the set of 
all $m\le \log n$, 
$|y|\le n^{1/4}$, $|a|\le n^{29/40}$ and $|a-a'|\le n^{3/5}$,  
for which $\varphi_b(y,a)>0$ and $\varphi_{b-m}(0,0)>0$. 
By \cref{lem:controlR-N}, $\underline{\delta}_n, \overline{\delta}_n\to 1$,
uniformly. 

Next, we apply \cref{lem:changeconditioning} with 
\[
Z=\1(\sA_1,\ldots,\sA_{n-b}\ge0,\:
\sA_1^\leftarrow,\ldots,\sA_{n-b}^\leftarrow\ge0,\: \cR). 
\]
We find that 
\begin{align*}
\P_n(\cA_{n}^{n,b} \mid\cZ_n)
&\le \P_n(\cA_{n}^{n,b},\: \cR \mid\cZ_n) + \P_n(\cR^c \mid\cZ_n)\\
&\le \overline{\delta}_n \P_{n-m}(\cA_{n-m}^{n,b},\: \cR \mid\cZ_{n-m}) 
+ \P_n(\cR^c \mid\cZ_n)\\
&\le \overline{\delta}_n \P_{n-m}(\cA_{n-m}^{n,b} \mid\cZ_{n-m}) 
+ \P_n(\cR^c \mid\cZ_n).
\end{align*}   
Likewise, 
\begin{align*}
\P_n(\cA_{n}^{n,b} \mid\cZ_n)
&\ge \P_n(\cA_{n}^{n,b},\: \cR \mid\cZ_n)\\
&\ge \underline{\delta}_n \P_{n-m}(\cA_{n-m}^{n,b},\: \cR \mid\cZ_{n-m})\\
&\ge \underline{\delta}_n [\P_{n-m}(\cA_{n-m}^{n,b}\mid\cZ_{n-m})
-\P_{n-m}(\cR^c \mid\cZ_{n-m})].
\end{align*} 

Finally, note that $\P_n(\cA_{n}^{n,b} \mid\cZ_n)\ge q_n$. 
Therefore, the result 
follows by \cref{lem:no_bad_things}
and \eqref{E_qnOrder}.
\end{proof}

We are ready to prove the main result of this section. 
As discussed above, it remains only to show that 
the sign of the area process 
is unlikely to change
during the intermediate times not covered by 
\cref{L_almost}. 

\begin{proof}[Proof of  \cref{lem:ratio_qk}]
By \cref{L_almost}, it suffices to show that, 
as $n\to\infty$, 
\begin{equation}\label{L_middlen}
\frac{\P_n(\cA_{n}^{n,b} \mid\cZ_n)}{q_n}\to1
\end{equation}
and
\begin{equation}\label{L_middlenm}
\frac{\P_{n-m}(\cA_{n-m}^{n,b} \mid\cZ_{n-m})}{q_{n-m}}\to1
\end{equation}
uniformly in $m\le \log(n)$. 

We will show \eqref{L_middlen}, and \eqref{L_middlenm}
follows similarly. By \eqref{E_qnOrder}, it suffices to show 
\[ 
\P(
\exists k \in (n-b, n+b]: \sgn(\sA_{k-1})\neq \sgn(\sA_k)
\mid \cZ_n)
\ll n^{-1/2}.
\]

Note that 
\begin{align*}
&\{\exists k \in (n-b, n+b]: \sgn(\sA_{k-1})\neq \sgn(\sA_k)\}\\
&\quad\subset 
\{ |\sA_{n-b}|<2b n^{21/40} \}
\cup 
\left\{ \max_{1\le i\le 2b }|\sY_{n-b+i}|>n^{21/40}   \right\}. 
\end{align*}   
By Hoeffding's inequality,
\[
\P\left(\max_{1\le i\le 2b }|\sY_{n-b+i}|>n^{21/40}\right)
=n^{-\omega(1)}.
\]
So we need only show that 
\[ 
\P(|\sA_{n-b}|<2b n^{21/40} \mid \cZ_n)
\ll
n^{-1/2}.
\]
By \cref{lem:changeconditioning}, 
this probability 
can be expressed as the expected value of 
\begin{align*}
&\1\{|\sA_{n-b}|<2b n^{21/40}\} \\
&\quad\times\frac{\P(\sY'_{n+b}=-\sY_{n-b},\: 
\sA'_{n+b}=-\sA_{n-b}-(n+b)\sY_{n-b}\mid \sY_{n-b}, \sA_{n-b}) }
{\P(\sY'_{2n}=\sA'_{2n}=0)}, 
\end{align*}
where $\sY'$ is an independent copy of $\sY$, 
and $\sA'$ its area process.

However, by \cref{L_ADL14}, there is a constant $c$ such that 
\[
\P(\sY_{n+b}'=y,\: \sA_{n+b}'=a)
=O(n^{-2}). 
\] 
By \eqref{E_ZnOrder}, 
\[
\P(\sY'_{2n}=\sA'_{2n}=0)=\Theta(n^{-2}). 
\]
It follows that, for some $c>0$, 
\[
\P(|\sA_{n-b}|<2b n^{21/40} \mid \cZ_n)
\le c
\P(|\sA_{n-b}|<2b n^{21/40}).
\]

Note that any value of $a$ can be realised by $O(\sqrt{a})$ 
values of $y $. Therefore, 
again by \cref{L_ADL14}, 
\[
\P(|\sA_{n-b}|=a)
=  O(\sqrt{a}/n^{2}), 
\] 
and so
\[
\P(|\sA_{n-b}|<2b n^{21/40})
=O((bn^{21/40})^{3/2}/n^{2})
=O(n^{-43/80})
\ll n^{-1/2},
\] 
as required. 
\end{proof}

%%%%%%%%%%%%%%%%%%%%%%%%%%%%%%%%%%%%%%
%%%%%%%%%%%%%%%%%%%%%%%%%%%%%%%%%%%%%%
%%%%%%%%%%%%%%%%%%%%%%%%%%%%%%%%%%%%%%
%%%%%%%%%%%%%%%%%%%%%%%%%%%%%%%%%%%%%%
%%%%%%%%%%%%%%%%%%%%%%%%%%%%%%%%%%%%%%
\subsection{Negative binomial limit}

Finally, in this section, we will formalize the idea of 
the ``almost'' renewal times $t$, when $\sY_{2t}=\sA_{2t}=0$. 
Recall that the reason for ``almost'' is that after such a time, 
there is then less time remaining for the rest of the trajectory of the
walk. However, by \cref{lem:onlyhitnearboundary}, 
with high probability, such times $t$
only occur very close to the start and end of the walk. 
As such, the probability of an ``almost'' renewal time 
\begin{equation}\label{E_rho_n}
\rho_n
=\P\left(\exists k\le \log n: \cZ_k \mid \cS_n\right)
\end{equation}
does not depend on $n$
asymptotically.

Recall $\rho$ as defined in \eqref{E_rho}.

\begin{lemma}\label{lem:conv_rho}
As $n\to \infty$, 
we have that 
$\rho_n\to \rho$.
\end{lemma}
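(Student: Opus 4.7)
The plan is to decompose the conditional event via the stopping time $\tau$ from \eqref{E_rho}. On the event $\cS_n = \cA_n \cap \cZ_n$, the area process is non-negative by $\cA_n$, while by the definition of $\tau$ we have $\sA_\tau \le 0$; hence $\sA_\tau = 0$ whenever $\tau \le 2n$. Moreover, by definition of $\tau$, every even zero of $\sY$ strictly before $\tau$ has strictly positive area. Therefore, on $\cS_n$, the existence of some $k \le \log n$ with $\cZ_k$ (i.e., $\sY_{2k} = \sA_{2k} = 0$) is equivalent to $\tau \le 2 \log n$. Thus
\[
\rho_n\, \P(\cS_n) = \sum_{k=1}^{\lfloor \log n \rfloor} \P(\tau = 2k,\, \sA_\tau = 0,\, \cS_n).
\]

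Next, I would apply the strong Markov property at the even stopping time $\tau$. Since $2k$ is even, the sawtooth shift is preserved, so conditional on $\sY_{2k} = \sA_{2k} = 0$ the process $(\sY_{2k+j}, \sA_{2k+j})_{j\ge 0}$ has the same law as $(\sY_j, \sA_j)_{j \ge 0}$. Combined with the observation from \cref{S_CviaRW} that on $\cZ_n$ the condition $\cA_n$ reduces to $\sA_{2j} \ge 0$ at the even zeros of $\sY$ in $[1, 2n]$, the event $\{\tau = 2k,\, \sA_\tau = 0,\, \cS_n\}$ factorizes as
\[
\P(\tau = 2k,\, \sA_\tau = 0,\, \cS_n) = \P(\tau = 2k,\, \sA_\tau = 0) \cdot \P(\cS_{n-k}).
\]
Dividing by $\P(\cS_n) = q_n\, \P(\cZ_n)$, one obtains
\[
\rho_n = \sum_{k=1}^{\lfloor \log n \rfloor} \P(\tau = 2k,\, \sA_\tau = 0) \cdot \frac{q_{n-k}}{q_n} \cdot \frac{\P(\cZ_{n-k})}{\P(\cZ_n)}.
\]

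By \cref{lem:ratio_qk}, $q_{n-k}/q_n \to 1$ uniformly in $k \le \log n$, and by \eqref{E_ZnOrder}, $\P(\cZ_{n-k})/\P(\cZ_n) = (n/(n-k))^2(1+o(1)) \to 1$ uniformly in $k \le \log n$. Therefore
\[
\rho_n = (1+o(1))\, \P(\tau \le 2\log n,\, \sA_\tau = 0),
\]
which converges to $\P(\sA_\tau = 0) = \rho$ by monotone convergence. The main technical point is justifying the Markov factorization above, which requires carefully combining (i) the evenness of $\tau$ (so the sawtooth structure resets), (ii) the vanishing $\sA_\tau = 0$ forced by $\cA_n$, and (iii) the reduction (on $\cZ_n$) of $\cA_n$ to non-negativity at even zeros of $\sY$, so as to match the post-$\tau$ trajectory with an independent realization of $\cS_{n-k}$.
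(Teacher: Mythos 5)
Your proposal is correct and follows the same basic strategy as the paper (decompose by the first ``almost renewal'' time, apply the strong Markov property, invoke \cref{lem:ratio_qk}), but it is organized in a genuinely cleaner way that is worth noting. The paper introduces $\zeta_1$ (the even-time version of $\tau$), conditions on $\cZ_n$ throughout, and after applying \cref{lem:ratio_qk} arrives at $\P(\sA_{2\zeta_1}=0,\: \zeta_1\le m \mid \cZ_n)+o(1)$; it then needs a second, separate step --- the $\cG$/Radon--Nikodym argument built on the local limit theorem --- to remove the residual conditioning on $\cZ_n$. You instead factorize the \emph{unconditional} joint probability,
\[
\P(\tau = 2k,\: \sA_\tau = 0,\: \cS_n) = \P(\tau = 2k,\: \sA_\tau = 0)\,\P(\cS_{n-k}),
\]
and then observe $\P(\cS_{n-k})/\P(\cS_n)\to 1$ uniformly for $k\le\log n$, which follows at once from \cref{lem:ratio_qk} together with \eqref{E_ZnOrder}. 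This directly yields $\rho_n=(1+o(1))\,\P(\tau\le 2\log n,\:\sA_\tau=0)$ and eliminates the paper's second step entirely. Two small points deserve care in a write-up. First, the factorization relies on the fact that on $\{\tau = 2k,\: \sA_\tau = 0\}$ the event $\cA_k$ holds automatically: this follows from the monotonicity of $\sA$ on excursions of $\sY$ (so that $\cA_k$ reduces to non-negativity at even zeros of $\sY$ in $[0,2k]$), combined with the minimality of $\tau$ and $\sA_\tau=0$; you allude to this but should spell it out. Second, you should replace $\tau$ by $\zeta_1 = \min\{k>0:\sY_{2k}=0,\: \sA_{2k}\le 0\}$ (or explicitly restrict $\tau$ to even times, as the paper implicitly does in its proof), since as literally defined in \eqref{E_rho} the stopping time $\tau$ can be odd, in which case the sum $\sum_k\P(\tau=2k,\ldots)$ would omit those paths; on $\cS_n$ an odd $\tau$ still forces $\cZ_k$ at the next even time, so decomposing by $\zeta_1$ is the clean way to capture all cases.
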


\begin{proof}
Let 
\begin{equation}\label{E_zeta1}
\zeta_1
=\min\{k>0:\sY_{2k}=0,\: \sA_{2k}\le 0\}.
\end{equation} 
For any $k\le \log n$,
\begin{align*}
\P(\cA_n\mid\cZ_n,\: \zeta_1=k)
&=\P(\cA_k\mid\cZ_n,\: \zeta_1=k)
\P(\sA_{2k+1},\ldots,\sA_n\ge0\mid\cZ_k,\:\cZ_n)\\
&=\P(\sA_{2\zeta_1}=0 \mid\cZ_n,\: \zeta_1=k)q_{n-k}.
\end{align*}
Therefore, setting $m=\lfloor \log n\rfloor$, 
we find that 
\begin{align*}
&\P(\zeta_1\le m \mid \cS_n)\\
&=\sum_{k=1}^{m}\frac{
\P(\zeta_1=k \mid \cZ_n) 
\P( \cA_n\mid \cZ_n,\: \zeta_1=k )}
{\P(\cA_n \mid \cZ_n )}\\
&=\sum_{k=1}^{m} 
\P(\zeta_1=k \mid \cZ_n) 
\P(\sA_{2\zeta_1}=0 \mid\cZ_n,\: \zeta_1=k)\frac{q_{n-k}}{q_n}.
\end{align*}
Applying \cref{lem:ratio_qk}, it follows that 
\[
\P(\zeta_1\le m \mid \cS_n)
= \P(\sA_{2\zeta_1}=0,\: \zeta_1\le m \mid \cZ_n)
+o(1). 
\]

Next, we will show that the conditioning on $\cZ_n$ 
has a negligible effect on the probability 
of the event
$\{\sA_{2\zeta_1}=0,\: \zeta_1\le m\}$.  
Intuitively, note that this event is measurable 
with respect to the first $2m$ steps of the process, 
and that, on this scale, the process 
will hardly notice that it is tied down at the end. 

To make this rigorous, we use ideas which 
are similar to the proof of \cref{lem:ratio_qk}. 
Specifically, note that 
\[
\frac{\varphi_{n-m}(y, a)}{\varphi_n(0,0)}\to 1, 
\]
uniformly in $y\le n^{1/4}$ and $a\le n^{5/4}$, 
where is $\varphi_\ell(y,a)$ as defined in \eqref{E_varphi}. 
Let
\[
\cG
=\{ |\sY_{2m}|, |\sA_{2m}|\le n^{1/4}\}, 
\]
which occurs deterministically for all large $n$. 
Hence, for all such $n$, 
\begin{align*}
\P(\sA_{2\zeta_1}=0,\: \zeta_1\le m \mid \cZ_n)
&=\P(\sA_{2\zeta_1}=0,\: \zeta_1\le m,\:\cG)+o(1)\\
&=\P(\sA_{2\zeta_1}=0,\: \zeta_1\le m)+o(1).
\end{align*}

Finally, note that 
$\sA_{2\zeta_1}$ does not depend on $n$. 
Therefore, 
\[
\P(\sA_{2\zeta_1}=0,\: \zeta_1\le m)
\to \P(\sA_{2\zeta_1}=0,\: \zeta_1<\infty)
=\rho, 
\]
since $\zeta_1<\infty$ almost surely, 
which concludes the proof. 
\end{proof}

Finally, we obtain the asymptotic
distribution of the number of ``almost'' renewal times $M_n$, 
that is, the number of events $\cZ_k=0$, $k\le n$. 

\begin{prop}
\label{P_MntoM}
Let $M$ be a negative binomial random variable 
with parameters $p=1-\rho$ and $r=2$. 
Then, conditional on $\cS_n$, we have that 
$M_n\overset{d}{\to}1+M$, 
as $n\to \infty$.
In particular, $\E[M_n^{-1}\mid \cS_n]\to 1-\rho$. 
\end{prop}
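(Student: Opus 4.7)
The plan is to decompose $M_n$ into contributions from ``almost'' renewal times near the two endpoints of the walk and show that these become two essentially independent geometric-type variables in the limit. Let $m_n = \lfloor \log n \rfloor$. By \cref{lem:onlyhitnearboundary}, under $\P(\cdot \mid \cS_n)$, every $k$ with $\cZ_k$ lies in $(0, m_n] \cup [n - m_n, n]$ with probability $1 - o(1)$. Setting $L_n = \#\{k \in (0, m_n] : \cZ_k\}$ and $R_n = \#\{k \in [n - m_n, n] : \cZ_k\}$, we have $M_n = L_n + R_n$ up to negligible error.

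For $L_n$, I would show $L_n \overset{d}{\to} L$ with $\P(L \ge k) = \rho^k$ for $k \ge 0$, by iterating \cref{lem:conv_rho}. The case $k = 1$ is \cref{lem:conv_rho} itself. For the induction, condition on $L_n \ge 1$ and let $\zeta^{(1)} \le m_n$ denote the first ``almost'' renewal time. By the strong Markov property the post-$\zeta^{(1)}$ walk is a fresh random walk of length $2(n - \zeta^{(1)})$, and conditioning the full walk on $\cS_n$ amounts to conditioning this shorter walk on $\cS_{n - \zeta^{(1)}}$. Since $\zeta^{(1)} \le m_n = o(n)$, re-applying \cref{lem:conv_rho} yields $\P(L_n \ge k + 1 \mid L_n \ge k, \cS_n) \to \rho$. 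For $R_n$, the Radon--Nikodym machinery of \cref{lem:changeconditioning,lem:controlR-N,lem:no_bad_things} used to prove \cref{lem:ratio_qk} already shows that, under $\P(\cdot \mid \cZ_n)$, the first and last $m_n$ increments are asymptotically independent and each distributed like those of an unconditioned walk; this simultaneously gives $R_n \overset{d}{\to} R \overset{d}{=} L$ and the asymptotic independence of $L_n$ and $R_n$.

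Combining, $M_n \overset{d}{\to} L + R$ (up to a deterministic endpoint shift), which, after matching conventions, is exactly the negative binomial distribution stated and is consistent with the limit law for $\cI_n$ derived in \cref{S_CviaRT} (with $\rho = e^{-\lambda}$). Finally, for the moment statement, note that $M_n \ge 1$ on $\cS_n$ (the time $k = n$ always contributes), so $1/M_n \le 1$ and bounded convergence upgrades the weak limit to $\E[M_n^{-1} \mid \cS_n] \to \E[1/M]$; the identity $\E[1/M] = \rho$ then follows from a short generating-function calculation, for example via $1/M = \int_0^1 x^{M-1}\, dx$.

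The main obstacle is the induction in the second paragraph: the identity $\P(L_n \ge k \mid \cS_n) \to \rho^k$ requires that, after the first ``almost'' renewal at $\zeta^{(1)}$, the remainder of the walk behaves under $\P(\cdot \mid \cS_n)$ like an independent score-sequence walk of length $n - \zeta^{(1)}$. Making this precise is exactly where \cref{lem:ratio_qk} enters, guaranteeing that the further conditioning on $\cS_n$ contributes only a factor $1 + o(1)$ at each inductive step.
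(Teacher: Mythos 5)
Your proposal follows the same overall strategy as the paper: split $M_n$ into contributions near the two endpoints, iterate \cref{lem:conv_rho} via the renewal decomposition at ``almost'' renewal times, sum the two resulting geometrics into a (shifted) negative binomial, and upgrade to $\E[M_n^{-1}\mid\cS_n]\to\rho$ by bounded convergence (your observation that $\cZ_n$ always holds on $\cS_n$, so $M_n\ge1$ and $1/M_n\le1$, is exactly the right justification, and is more explicit than the paper). The one step that is not right as stated is your treatment of $R_n$ and of the joint law of $(L_n,R_n)$. You claim that the Radon--Nikodym machinery of \cref{lem:changeconditioning,lem:controlR-N,lem:no_bad_things} ``already shows that, under $\P(\cdot\mid\cZ_n)$, the first and last $m_n$ increments are asymptotically independent''; but those lemmas compare the law of $\Sigma_{n-b}$-measurable functionals under $\P(\cdot\mid\cZ_n)$ with that under $\P(\cdot\mid\cZ_{n-m})$ --- a statement about nearby lengths $n$, not about independence of the two ends --- and in any case you need the joint law under the stronger conditioning $\cS_n$, where the area constraint couples the two ends. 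The paper instead gets independence for free from the same renewal argument you already used for $L_n$, run at both ends simultaneously: conditioning on $\{M_n^{(1)}\ge m_1,\:\zeta^{(1)}_{m_1}=j,\:M_n^{(2)}\ge m_2,\:\zeta^{(2)}_{m_2}=k,\:\cS_n\}$, the middle segment $(\check Y_{2j+i})_{0\le i\le 2(n-j-k)}$ is a fresh walk conditioned on $\cS_{n-j-k}$, so \cref{lem:conv_rho} applied to this shorter walk gives $\P(M_n^{(1)}\ge m_1+1\mid\cdots)\to\rho$; iterating in $m_1$ and (by time-reversal symmetry) in $m_2$ yields the joint tail $\P(M_n^{(1)}\ge m_1,\:M_n^{(2)}\ge m_2\mid\cS_n)\to\rho^{m_1+m_2}$ directly. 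So the fix is simply to replace your Radon--Nikodym appeal by the two-sided renewal argument you already sketched for the left end.
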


\begin{proof}
Once again, 
we let $m=\lfloor \log n \rfloor$. 
We will use 
\cref{lem:conv_rho} 
to show that, on the event $\cS_n$, 
the random variables 
\begin{align*}
&M^{(1)}_n=\# \left\{ 0<k\le m :\cZ_k\right\},\\
&M^{(2)}_n=\# \left\{ n-m\le k<n:\cZ_k \right\},
\end{align*}
jointly converge to  
independent geometric random variables, with parameters
$p=1-\rho$. This implies that $M^{(1)}_n+M^{(2)}_n\overset{d}{\to}M$, 
and the result then follows by \cref{lem:onlyhitnearboundary}. 

First, note that, by \cref{lem:conv_rho}  and symmetry, 
\[
\P(M^{(i)}_n\ge 1\mid \cS_n)
=\rho_n\to \rho,\quad\quad i=1,2. 
\]
Next, we claim that, for any $m_1,m_2 \geq 0$, 
\begin{equation}\label{E_OneMore}
\P(M^{(1)}_n\ge m_1+1 
\mid M^{(1)}_n\ge m_1,\: 
M^{(2)}_n\ge m_2,\:  
\cS_n)\to \rho. 
\end{equation}
Then, by symmetry, we find that, for any $m_1,m_2 \ge 0$, 
\[
\P=(M^{(1)}_n\ge m_1,\: M^{(2)}_n \ge m_2 
\mid  \cS_n)\to \rho^{m_1+m_2},
\]
which implies the lemma.

Let $\zeta^{(1)}_j$ 
(resp.\ $\zeta^{(2)}_j$)
denote the
index of the $j$th smallest (resp.\ largest) time $t\in(0,n)$ 
that $\cZ_t$ occurs. In particular, $\zeta_1=\zeta^{(1)}_1$, 
as defined in \eqref{E_zeta1}. 
We note that, conditional on 
\[
\cS_n^{m_1,m_2}(j,k)=
\{M^{(1)}_n\ge m_1,\:
\zeta_{m_1}^{(1)}=j,\: 
M^{(2)}_n\ge m_2,\:
\zeta_{m_2}^{(2)}=k,\: 
\cS_n
\}, 
\]
the process 
$(\sY_{2j+i})_{0\le i \le 2(n-j-k)}$ 
has the same law as  
$(\sY_{i})_{0\le i \le 2(n-j-k)}$, 
conditional on $\cS_{n-j-k}$. 
Therefore, for $j,k=O(\log\log n)$, we have that 
\begin{align*}
\P(M^{(1)}_n\ge m_1+1\mid \cS_n^{m_1,m_2}(j,k))
&=\P(\exists k \in (0,m-j]:\cZ_k\mid \cS_{n-j-k})\\
&=\rho_{n-j-k}+o(1).
\end{align*} 
The $o(1)$ correction accounts for the discrepancy 
between $m$ and $m-j$. Indeed, 
this discrepancy has a negligible $o(1)$ affect, by 
\cref{lem:onlyhitnearboundary}.

Therefore, by \cref{lem:conv_rho}, the above converges
to $\rho$, uniformly in all $j,k=O(\log\log n)$. 
On the other hand, by \cref{lem:onlyhitnearboundary}, 
\[
\P( \zeta_{m_1}^{(1)} >\log\log n \mid 
M^{(1)}_n\ge m_1,\: M^{(2)}_n\ge m_2,\: \cS_n)
\to 0. 
\]
Then, since
\[
\P( M^{(1)}_n\ge m_1,\: M^{(2)}_n\ge m_2
\mid 
\cS_n)
\]
is bounded away from 0, 
\eqref{E_OneMore} follows, as required. 
\end{proof}

%%%%%%%%%%%%%%%%%%%%%%%%%%%%%%%%%%%%%%
%%%%%%%%%%%%%%%%%%%%%%%%%%%%%%%%%%%%%%
%%%%%%%%%%%%%%%%%%%%%%%%%%%%%%%%%%%%%%
%%%%%%%%%%%%%%%%%%%%%%%%%%%%%%%%%%%%%%
%%%%%%%%%%%%%%%%%%%%%%%%%%%%%%%%%%%%%%
\section{The scaling limit}
 
Finally, in this section, we will prove
\cref{T_ScalingLimit}. 

We let 
$(\bfY_t,\bfA_t)$ denote the 
{\it positive Kolmogorov excursion (from zero and back)}  
in \cite{BDW23}.
We will first observe the following scaling limit
for the lattice path associated with a random score sequence. 

\begin{prop}\label{P_EtoY}
Let
${\bf S}^{(n)}=({\bf S}^{(n)}_1\le \cdots\le {\bf S}^{(n)}_n)$
be a uniformly random score 
sequence of length $n$. 
Let 
${\bf Y}^{(n)}
=({\bf Y}^{(n)}_0,{\bf Y}^{(n)}_1,\ldots,{\bf Y}^{(n)}_{2n})$ 
denote the bridge path $\cP({\bf S}^{(n)})$
obtained by rotating the walk ${\bf W}^{(n)}=\cW({\bf S}^{(n)})$ clockwise
by $\pi/4$. 
Then, as $n\to\infty$, we have that
\[
\left((2n)^{-1/2}{\bf Y}^{(n)}_{\lfloor 2nt \rfloor},\: 
0\le t\le 1\right)
\overset{d}{\to}
(\bfY_t,\: 0\le t\le 1), 
\]
where $(\bfY_t,\bfA_t)$ is a Kolmogorov excursion. 
\end{prop}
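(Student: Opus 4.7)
The plan is to identify ${\bf Y}^{(n)}$ with a natural conditioned random walk and then invoke an invariance principle, with the Kolmogorov excursion $(\bfY,\bfA)$ of \cite{BDW23} as the limit object.

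\textbf{Step 1 (identification).} By \cref{P_SnBridge} and the discussion in \cref{S_CviaRW}, the rotated bridge ${\bf Y}^{(n)}$ is distributed as $\check Y$ on $[0,2n]$ under $\P(\cdot\mid\cS_n)$. Since $\check Y_k=Y_k+\1_{\{k\text{ odd}\}}$ and $\check A_k=A_k+\lceil k/2\rceil$, the event $\cS_n$ reduces, under the law of the simple random walk $Y$, to the bridge condition $\{Y_{2n}=0\}$, the area condition $\{A_{2n}=-n\}$, and the shifted-positivity constraint $\{A_k\ge -\lceil k/2\rceil\ \forall k\}$. At scale $\sqrt{n}$ for $Y$ and $n^{3/2}$ for $A$, these shifts vanish in the limit, so after rescaling the conditioning becomes the continuum event used to construct $(\bfY,\bfA)$ in \cite{BDW23}: a Brownian bridge with zero total area and non-negative area process.

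\textbf{Step 2 (finite-dimensional convergence and tightness).} For $0<t_1<\cdots<t_k<1$, the Markov property of the pair $(Y,A)$ allows us to write the conditional joint probability of $(Y_{2\lfloor nt_i\rfloor},A_{2\lfloor nt_i\rfloor})_{i\le k}$ under $\P(\cdot\mid\cS_n)$ as a ratio: a product of transition kernels of $(Y,A)$ on the sub-intervals, multiplied by the conditional persistence factor on the last sub-interval, divided by $\P(\cS_n)$. Using \cref{L_ADL14} for the transition kernels and the $\Theta(\ell^{-1/2})$ estimate from \eqref{E_qnOrder} (together with the argument of \cref{lem:ratio_qk} to pass the persistence factor to the limit), the rescaled ratio converges to the Doob $h$-transform density defining $\bfY$. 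Tightness in the Skorokhod space $D([0,1])$ follows in the standard way: since $\P(\cS_n)=\Theta(n^{-5/2})$ by \cref{P_SnBridge} and \cref{T_order}, Hoeffding-type deviation bounds for the unconditioned walk on dyadic sub-intervals remain negligible after paying the polynomial cost of the conditioning, and a Kolmogorov--Chentsov criterion then gives the required compactness.

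\textbf{Main obstacle.} The delicate point is a sufficiently uniform, joint local limit theorem for the pair $(Y,A)$ on both sides of a generic interior splitting time, with error uniform over admissible endpoint values. \cref{L_ADL14} supplies this for the endpoint of the entire walk; to treat an interior split we either quote the appropriate off-diagonal extension from \cite{ADL14} or reassemble it from the techniques used in \cref{S_denom} (especially \cref{lem:changeconditioning,lem:controlR-N}), which already decouple the walk near its endpoints from its behaviour in the bulk. Alternatively, if a functional invariance principle for $(Y,A)$ bridges with non-negative area is stated in \cite{BDW23} in a sufficiently flexible form, the entire argument collapses to the bijective Step~1 together with a direct appeal, the deterministic sawtooth shift being $o(\sqrt n)$ and hence absorbed in the centring.
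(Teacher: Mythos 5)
Your \emph{alternative} route in the ``Main obstacle'' paragraph is exactly what the paper does: it quotes the functional invariance principle \cite[Theorem 2]{BDW23} directly and does not attempt finite-dimensional convergence or tightness by hand. Your primary plan (Step~2) would amount to re-proving that theorem, and the interior joint local limit theorem you flag as the sticking point is a genuine obstacle -- the paper never needs it because it never splits the bridge. What the paper adds to your Step~1 is a small but essential trick to make the conditioning event match the hypotheses of \cite[Theorem 2]{BDW23} verbatim: it introduces the shifted processes $A_k'=A_k+k/2+3/4$ and $Y_k'=Y_k+1/2$, and checks, using that $A$ is integer-valued, that $\{A_k'>0,\ A_{2n}'=3/4,\ Y_{2n}'=1/2\}$ coincides with $\{A_k\ge-\lceil k/2\rceil,\ A_{2n}=-n,\ Y_{2n}=0\}$ -- the constant $3/4$ being chosen precisely so that the non-strict integer inequality becomes a strict positivity condition on both parities of $k$. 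The theorem then applies on the nose, and since $|Y_k'-Y_k|=1/2=o(\sqrt n)$ and $|A_k'-A_k|=O(n)=o(n^{3/2})$, the shifts are washed out by the scaling. Your phrase ``the deterministic sawtooth shift being $o(\sqrt n)$'' is slightly misleading for the area coordinate (that shift is order $n$), but it is $o(n^{3/2})$, which is all that is needed; it would be worth stating the two scaling exponents separately.
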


\begin{proof}

Let $(Y_i,\: 0\le i\le 2n)$ be a simple random walk bridge, 
conditioned on its area process $A_k=\sum_{i=1}^kY_i$ 
satisfying 
\begin{align}
&A_k\ge -\lceil k/2\rceil,\quad\quad 1\le k<2n,\label{E_kol1}\\ 
&A_{2n}=-n. \label{E_kol2} 
\end{align}

As discussed in \cref{S_intro}, by the bijection in \cite[p.\ 212]{WK83}
and a rotation clockwise by $\pi/4$, it follows that score sequences
$\bfs=(s_1,\ldots,s_n)$ 
satisfying \eqref{E_maj1} and \eqref{E_maj2}
are in bijection with bridges $(Y_i,\: 0\le i\le 2n)$
satisfying \eqref{E_kol1} and \eqref{E_kol2}.
Indeed, 
the bijection associates each $\bfs$ to 
a bridge $Y=Y(\bfs)$ with 
increments given by 
\[
Y_i-Y_{i-1}
=
\begin{cases}
-1,&i\in\{s_1+1,\ldots,s_n+n\};\\
+1,&i\in[2n]\setminus \{s_1+1,\ldots,s_n+n\}. 
\end{cases}
\]
As such, 
it suffices to show that 
\begin{equation}\label{E_kol}
((2n)^{-1/2}Y_{\lfloor 2nt \rfloor},\: 0\le t \le 1)
\overset{d}{\to}
(\bfY_t,\: 0\le t \le 1). 
\end{equation}

To show \eqref{E_kol}, we will 
first consider the following 
modified processes. 
Put
\begin{align*}
&A_k'=A_k+k/2+3/4,\\
&Y_k'=Y_k+1/2.
\end{align*}
We claim that 
\begin{align*} 
&\left\{A'_k>0,\all1\le k \le 2n,\: A'_{2n}=3/4,\: Y'_{2n}=1/2\right\}\\
&\quad=\left\{A_k\ge -\lceil k/2\rceil,\all 1\le k \le 2n,\: A_{2n}=-n,\:  Y_{2n}=0\right\}.
\end{align*}
Clearly, $A'_{2n}=3/4$ and $Y'_{2n}=1/2$ if and only if $A_{2n}=-n$ and $Y_{2n}=0$. 
On the other hand, since $A$ can only take integer values, it follows that, 
for integers $\ell\ge1$, 
\begin{align*} 
&\{A_{2\ell}\ge -\ell\}=\{A_{2\ell}>-\ell-3/4\} = \{A'_{2\ell}>0\},\\
&\{A_{2\ell-1}\ge -\ell\}=\{A_{2\ell-1}>-\ell-1/4\} = \{A'_{2\ell-1}>0\},
\end{align*} 
so the claim follows. 
Therefore, by 
\cite[Theorem 2]{BDW23}, 
\[
((2n)^{-1/2} Y'_{\lfloor 2nt \rfloor}, 
(2n)^{-3/2}A'_{\lfloor 2nt \rfloor},\: 0\le t \le 1)
\overset{d}{\to}
(\bfY_t,\bfA_t,\: 0\le t \le 1), 
\]
in the topology of uniform convergence on c\`adl\`ag processes on $[0,1]$. 
Since $|Y'_n-Y_n|$  and $|A'_n-A_n|$
are of smaller order than the scaling factors,
\eqref{E_kol} follows, and this completes the proof. 
\end{proof}

Recall that score sequences 
are in bijection with bridges
with partial areas $\ge0$ and
total area $0$ above the sawtooth path. 
\cref{F_WK,F_WKrot} above demonstrate the 
forward direction of this bijection. Essentially, 
a score sequence
$\bfs=(s_1,\ldots,s_n)$ is drawn as a ``bar graph,''
and then rotated clockwise by $\pi/4$. 
Roughly speaking, we will obtain \cref{T_ScalingLimit}
by reversing this procedure in the continuum. 

While $\bfs=(s_1,\ldots,s_n)$ is of length $n$, its associated 
bridge path ${\bf y}=(y_0,y_1,\ldots,y_{2n})$ is of length $2n$, with $n$ {\it up steps}
and $n$ {\it down steps}. However, only the times
of the down steps are needed to recover $\bfs$. 
More specifically, let $\vartheta_k$ be the time {\it before} the $k$th 
down step. 
Then $s_k=y_{\vartheta_k}+k-1$, where $k-1$
is the height of the $k$th step along the staircase walk. 
Informally, after the path is rotated counterclockwise by $\pi/4$, 
the up steps become vertical lines. These lines are 
``traversed instantly,'' as we read the score sequence
as a ``bar graph'' 
from left to right, so time changes from $[0,2t]$
to $[0,t]$.  See \cref{F_WKinv}. 

\begin{figure}[h]
\centering
\includegraphics[scale=1]{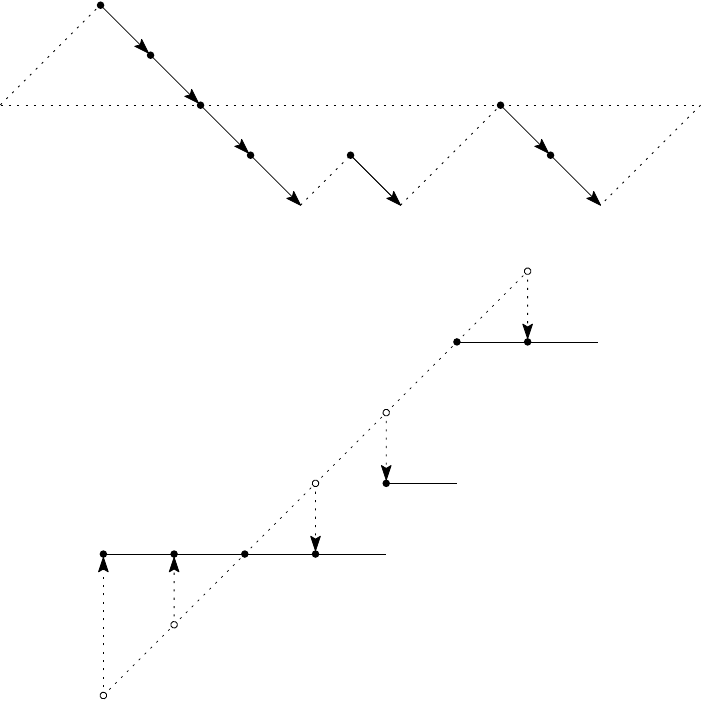}
\caption{
{\it Above:} The bridge path in \cref{F_WKrot}, 
with its values before down steps 
indicated. {\it Below:} The 
bar graph in \cref{F_WKrot} is recovered 
by adding these values to
the staircase walk. 
}
\label{F_WKinv}
\end{figure}

Since, by \cref{P_EtoY}, the excursion 
has a Brownian limit, which changes 
directions constantly, the process 
\[
\left(n^{-1/2}({\bf S}^{(n)}_{\lfloor nt \rfloor}-\lfloor nt \rfloor),\: 0\le t\le 1\right)
\overset{d}{\to}
(\bfY_t,\: 0\le t\le 1) 
\]
will have the same scaling limit 
$\bfY_t$ 
as the bridge path ${\bf Y}^{(n)}$ in \cref{P_EtoY}. 
Note that $(\lfloor nt \rfloor,\: 0\le t\le 1)$ is the
staircase walk.

\begin{proof}[Proof of \cref{T_ScalingLimit}]

Let $(X_i,\: 0\le i\le 2n)$ be a simple random walk, 
so that $X$ conditioned on $\cS_n$ is distributed as 
$\bY^{(n)}$, as in \cref{P_EtoY}. 
Let $\vartheta_k=\vartheta_k(X)$ be the time before the 
$k$th down step  of $X$. 
Formally, we put 
$\vartheta_0=0$ and then, for $1\le k \le n$, 
\[
\vartheta_k=\inf\{\ell>\vartheta_{k-1}: X_\ell >X_{\ell +1}\}.
\]
Then, we see that, for $1\le k \le n$, the corresponding 
walk $\bW^{(n)}$ passes through the points 
$(k-1,\bY^{(n)}_{\vartheta_k}+k-1)$ and $(k,\bY^{(n)}_{\vartheta_k}+k-1)$. 
Therefore, 
\begin{equation}\label{eq:S_and_E}
\bS^{(n)}_k=\bY^{(n)}_{\vartheta_k}+k-1,\quad\quad 1\le k\le n. 
\end{equation}
  
Next, we will show that 
\begin{equation}\label{E_diag}
\left((2n)^{-1}\vartheta_{\lfloor n t\rfloor}(\bY^{(n)}),\: 0\le t\le 1\right)
\overset{d}{\to} (t,\: 0\le t\le 1), 
\end{equation}
in the topology of uniform convergence. Then, the theorem 
follows by
 \cref{P_EtoY}, 
\eqref{eq:S_and_E}
and the fact that $(\bY_t,\:0\le t\le 1)$ is continuous almost surely. 
For $1\le k \le 2n$, let 
\[
\vartheta^{-1}_k
=\vartheta^{-1}_k(X)
=\# \{\ell\le k: X_{\ell-1}>X_{ \ell}\}
\] 
be the number of down steps in the first $k$ steps. 
Then 
$\vartheta_k={\max\{\ell:\vartheta^{-1}_\ell < k\}}$. 
It suffices to show that 
\begin{equation}\label{E_diag_inv}
\left(n^{-1} \vartheta^{-1}_{\lfloor 2nt \rfloor}(\bY^{(n)}),\: 0 \le t \le 1\right)
\overset{d}{\to} 
(t,\: 0\le t \le 1), 
\end{equation}
in the topology of uniform convergence, 
since $(t,\: 0\le t \le 1)$ is strictly increasing. 
Let $B_1,B_2,\dots$ be i.i.d.\ 
$\Bern(1/2)$ random variables. 
Note that 
\[
(\vartheta^{-1}_k(X),\: 1\le k \le 2n)
\overset{d}{=} 
\left( \sum_{i=1}^k B_i,\: 1\le k \le 2n\right).
\]
Therefore, 
applying Chernoff's bound at times $\lfloor \eps k n/2 \rfloor$, 
and then taking a union bound, we find that, 
for some $\alpha>0$,
\[ 
\P\left(
\sup_{1\le k \le 2n  } 
n^{-1}\left| \vartheta^{-1}_k(X)-k/2\right| 
>\epsilon
\right)
\le e^{-\alpha n},
\]
for all large $n$. 
Hence, as $n\to\infty$, 
\[
\P\left(\sup_{1\le k \le 2n  } 
n^{-1}
\left| \vartheta^{-1}_k(\bY^{(n)})-k/2\right| >\epsilon\right)
\le 
\frac{e^{-\alpha n} }{\P(\cS_n)}\to0,
\]
since $\P(\cS_n)=\Theta(n^{-5/2})$. 
This yields  \eqref{E_diag_inv}, and then  
\eqref{E_diag} follows.  
As discussed, this completes the proof. 
\end{proof}

%%%%%%%%%%%%%%%%%%%%%%%%%%%%%%%%%%%%%%
%%%%%%%%%%%%%%%%%%%%%%%%%%%%%%%%%%%%%%
%%%%%%%%%%%%%%%%%%%%%%%%%%%%%%%%%%%%%%
%%%%%%%%%%%%%%%%%%%%%%%%%%%%%%%%%%%%%%
%%%%%%%%%%%%%%%%%%%%%%%%%%%%%%%%%%%%%%
\makeatletter
\renewcommand\@biblabel[1]{#1.}
\makeatother

\providecommand{\bysame}{\leavevmode\hbox to3em{\hrulefill}\thinspace}
\providecommand{\MR}{\relax\ifhmode\unskip\space\fi MR }
% \MRhref is called by the amsart/book/proc definition of \MR.
\providecommand{\MRhref}[2]{%
  \href{http://www.ams.org/mathscinet-getitem?mr=#1}{#2}
}
\providecommand{\href}[2]{#2}

\end{document}